\newtheorem{thm}{\textbf{Theorem}}[section]
\newtheorem{defn}[thm]{\textbf{Definition}}
\newtheorem{prop}[thm]{\textbf{Proposition}}
\newtheorem{lem}[thm]{\textbf{Lemma}}
\newtheorem{corollary}[thm]{\text{Corollary}}
\newtheorem{rem}[thm]{\textbf{Remark}}
\newtheorem*{thmn}{\textbf{Theorem}}
\def\Q{\mathbb{Q}}
\def\Z{\mathbb{Z}}
\def\C{\mathbb{C}}
\def\A{\mathbb{A}}
\def\R{\mathbb{R}}
\def\GL{\operatorname{GL}}
\def\SL{\operatorname{SL}}
\def\PGL{\operatorname{PGL}}
\def\D{\mathcal{D}}
\def\Qbar{\overline{\mathbb{Q}}}
\newcommand{\Hom}{\operatorname{Hom}}
\definecolor{Judith}{rgb}{1,0.2,0.2}
\title{A $p$-adic Labesse--Langlands transfer}
\author{Judith Ludwig}
\email{ludwig@mpim-bonn.mpg.de}
\begin{document}

\begin{abstract}
We prove a $p$-adic Labesse--Langlands transfer from the group of units in a definite quaternion algebra to its subgroup of norm one elements. 
More precisely, given an eigenvariety for the first group, we show that there exists an eigenvariety for the second group and a morphism between them that extends the classical Langlands transfer. In order to find a suitable target eigenvariety for the transfer we formalise a notion of Langlands compatibility of tame levels. Proving the existence of Langlands compatible tame levels is the key to pass from the classical transfer on the level of $L$-packets to a map between classical points of eigenvarieties, which is then amenable for interpolation to give the $p$-adic transfer.   
\end{abstract}
\maketitle
 
\section{Introduction}
In recent years there has been a lot of progress towards solving the Langlands functoriality conjectures (cf.\ e.g.\ \cite{Arthur} and \cite{Mok}). 
Furthermore a $p$-adic Langlands programme started to emerge. Although the general definition of a $p$-adic automorphic representation is still missing we have good working definitions of $p$-adic automorphic forms in many situation. Given such a definition one can ask which aspects of the classical Langlands programme make sense for these $p$-adic automorphic forms and it is particularly interesting to ask about Langlands functoriality: If $G$ and $H$ are two connected reductive groups defined over a number field together with a classical Langlands transfer from $G$ to $H$ and a definition of $p$-adic automorphic forms then is there a $p$-adic Langlands transfer?

This paper studies such a transfer. As it seems hard to directly compare spaces of $p$-adic forms on different reductive groups, our approach is to interpolate the classical transfer. For this reason we work with $p$-adic families of automorphic forms, in which the points corresponding to classical forms are sufficiently dense. Such $p$-adic families are provided by eigenvarieties -- these are finite dimensional rigid analytic spaces that interpolate systems of Hecke eigenvalues attached to automorphic representations. They have been constructed for many reductive groups (cf.\ \cite{kevin}, \cite{Chenevier}, \cite{emerton}, \cite{david} and \cite{urban}).
Given a construction of eigenvarieties for two groups $G$ and $H$ as above, one can ask, whether the classical transfer interpolates to a morphism between these rigid spaces.  Calling such a morphism a $p$-adic transfer is justified after one checks that it is compatible with the morphism between the Hecke algebras involved. If we prove the existence of a $p$-adic transfer, then, not only can we transfer points on eigenvarieties that correspond to non-classical points. A $p$-adic transfer gives us a way of comparing $p$-adic families of automorphic forms for different groups in geometric terms.

In this paper we prove the transfer from the group of units in a definite quaternion algebra to the subgroup of norm one elements. 
For the group of norm one elements in a quaternion algebra stable conjugacy and conjugacy do not coincide. As a consequence the classical transfer is proved on the level of $L$-packets rather than representations. It is particularly interesting to study $p$-adic Langlands functoriality in such situations: Issues like endoscopy complicate the picture and cause problems when trying to pass from the classical transfer to a transfer of classical points on eigenvarieties. Before we give an overview of the results, we would like to explain the key points:

Given an eigenvariety $\mathcal{E}$ for the first group, the first step in proving $p$-adic functoriality is to find a compatible eigenvariety for the second group by which we mean that we can define a map from classical points of $\mathcal{E}$ to it. 

One of the parameters of an eigenvariety is that of a tame level, i.e.\ a compact open subgroup $K \subset G(\A^p_f)$, if $G$ is the group in question. In order to find a compatible eigenvariety, a key issue is to find a compatible tame level. We need to understand which tame levels are respected by the classical transfer -- and not just with respect to the transfer of packets but of systems of Hecke eigenvalues attached to automorphic representations, as it is those, which the eigenvarieties interpolate. In this paper we formalise a notion of \textit{Langlands compatibility of tame levels} (see Def.\ \ref{GLCdefspecial} and Def.\ \ref{GLCdef}) in a way that should be easily adaptable to other settings. The main tools we use to prove the existence of compatible tame levels are the multiplicity formulas.

After we have defined a map on classical points we prove that it interpolates to a morphism of rigid analytic spaces. For this we use a very general argument of Bella{\"{\i}}che and Chenevier (cf.\ Proposition \ref{interpolation}). 

The first case of $p$-adic functoriality was studied by Chenevier, who proved a $p$-adic Jacquet--Langlands transfer (see \cite{pjl}). Other results have been established e.g.\ in \cite{hansen}, \cite{newton} and \cite{pjw} and by the author in \cite{me}. To our knowledge all known $p$-adic transfers have in common that classically the transfer works on the level of representations rather than packets, i.e.\ either one works with groups where stable conjugacy and conjugacy agree or if not one imposes extra conditions (like in \cite{me} a stability condition) to enforce this. In the present work there is no such restriction. Furthermore our source eigenvariety can have arbitrary tame level. 

\subsection{Overview of the results}
\label{subsec: overview}
Let $B/\Q$ be a quaternion algebra, which is ramified at $\infty$. Let $\widetilde{G}$ be the algebraic group over $\Q$ defined by the units in $B$, so for any $\Q$-algebra $R$, $\widetilde{G}(R) = (B\otimes_\Q R)^*$. Let $G$ be the subgroup of $\widetilde{G}$ of norm one elements. 
We denote by $S_B$ the finite set of places, where $B$ ramifies and fix a prime $p$, which is not in $S_B$.
We fix models of $\widetilde{G}$ and $G$ over $\Z_{S_B}$.

Using Buzzard's machine Loeffler (in \cite{david}) established the existence of eigenvarieties for a wide class of groups and in particular for the groups $\widetilde{G}$ and $G$. 
The $L$-groups of $\widetilde{G}$ and $G$ are $\GL_2(\Qbar)$ and $\PGL_2(\Qbar)$ respectively. There is a natural projection $\GL_2(\Qbar)\rightarrow \PGL_2(\Qbar)$ and the transfer corresponding to this $L$-homomorphism was proved by Labesse and Langlands in \cite{LL}. To an automorphic representation $\widetilde{\pi}=\otimes \widetilde{\pi}_l$ of $\widetilde{G}(\A)$ one can attach an $L$-packet $\Pi(\widetilde{\pi})$, which is a set of irreducible admissible representations of $G(\A)$.
A representation $\pi=\otimes \pi_l$ of $G(\A)$ belongs to $\Pi(\widetilde{\pi})$ if for all places $l$, $\pi_l$ occurs in the restriction $\widetilde{\pi}_l|_{G(\Q_l)}$.
The finite set of representations~$\pi_l$ occurring in the restriction of $\widetilde{\pi}_l$ to $G(\Q_l)$ forms the local $L$-packet $\Pi(\widetilde{\pi}_l)$.
In \cite{LL}, Labesse and Langlands proved formulas for the multiplicity $m(\pi)$ with which a given representation $\pi \in \Pi(\widetilde{\pi})$ occurs in the discrete automorphic spectrum of $G(\A)$. 

In order to find a suitable target eigenvariety for our $p$-adic transfer, we will first find compatible tame levels in the sense of the next definition. We always assume our tame levels are given as a product of compact open subgroups of $\widetilde{G}(\Q_l)$ (resp.\ of $G(\Q_l)$). 

\begin{defn}\label{GLCdefspecial} [cf.\ Definition \ref{GLCdef}.]
Two tame levels $\widetilde{K} \subset \widetilde{G}(\A^p_f)$ and $K \subset G(\A^p_f)$ are called globally Langlands compatible if the following holds: For any discrete automorphic representation $\widetilde{\pi}$ of $\widetilde{G}(\A)$, such that $(\widetilde{\pi}^p_f)^{\widetilde{K}} \neq 0$ and any $\tau$ in the local $L$-packet $\Pi(\widetilde{\pi}_p)$, there exists an element $\pi$ in the $L$-packet $\Pi(\widetilde{\pi})$ defined by~$\widetilde{\pi}$, such that 
\begin{itemize}
	\item $m(\pi) > 0$,
  \item  $ (\pi^p_f)^K \neq 0$ and
	\item  $\pi_p = \tau$.
\end{itemize}
\end{defn}

In Proposition \ref{gLC} we show that for any tame level $\widetilde{K}\subset \widetilde{G}(\A^p_f)$ we can find a tame level $K\subset G(\A^p_f)$ such that $\widetilde{K}$ and $K$ are globally Langlands compatible. We also show that we can assume that $K_l = \SL_2(\Z_l)$, whenever $\widetilde{K}_l =\GL_2(\Z_l)$. Fix such a pair $\widetilde{K}, K$ of compatible tame levels. 
The compatibility guarantees that we can define a map from a sufficiently dense set $Z$ of classical points of $\widetilde{\D}:=\widetilde{\D}(\widetilde{K})$, the $p$-adic eigenvariety of tame level $\widetilde{K}$, to the set of classical points of the $p$-adic eigenvariety $\D:= \D(K)$. 

Part of the data of an eigenvariety is that of a Hecke algebra, which is in our case a product of local spherical Hecke algebras away from a finite set of primes $S$ and a commutative subalgebra of the Iwahori Hecke algebra at $p$. If $\mathcal{H}_S$ (resp.\ ~$\widetilde{\mathcal{H}}_S$) denotes the Hecke algebra used to build $\D$ (resp.\ $\widetilde{\D}$), then there are structural morphisms $\psi:\mathcal{H}_S \rightarrow \mathcal{O}(\D)$ and $\widetilde{\psi}:\widetilde{\mathcal{H}_S} \rightarrow \mathcal{O}({\widetilde{\D}})$, so we may view the Hecke operators as functions on the eigenvarieties. Furthermore there is a natural injection $\lambda: \mathcal{H}_S\hookrightarrow \widetilde{\mathcal{H}}_S$ (see Section \ref{subsec: iwahori}).
The eigenvariety $\widetilde{\D}$ comes equipped with a morphism $\widetilde{\D} \rightarrow \widetilde{\mathcal{W}}$ to the so called weight space $\widetilde{\mathcal{W}}:= \Hom((\Z_p^*)^2, \mathbb{G}_m)$ and $\D$ lives over the weight space $\mathcal{W}:=\Hom(\Z_p^*, \mathbb{G}_m)$. There is a natural map $\mu:\widetilde{\mathcal{W}} \rightarrow \mathcal{W}.$

A  morphism $\zeta:\widetilde{\D} \rightarrow \D$ is called a $p$-adic Langlands functorial transfer if the diagrams	
$$\xymatrix{
\widetilde{\D}  \ar[d]^{\widetilde{\omega}} \ar[r]^\zeta &\D \ar[d]^{\omega} \\
\widetilde{\mathcal{W}} \ar[r]^\mu &\mathcal{W} }
	\hspace{1cm}
	\xymatrix{
\mathcal{H}_S \ \ar[d]^\psi \ar@{^{(}->}[r]^\lambda & \widetilde{\mathcal{H}}_S \ar[d]^{\widetilde{\psi}}\\
  \mathcal{O}(\D) \ar[r]^{\zeta^*}  &\mathcal{O}(\widetilde{\D}) }  $$ 
  commute.	

Our main theorem is the following.

\begin{thmn}[Theorem \ref{p-adictransfer}]
There exists a $p$-adic Langlands transfer $\zeta:\widetilde{\D} \rightarrow \D$. It has the additional property that it sends a classical point $z \in Z$ to a classical point in $\D$.
\end{thmn}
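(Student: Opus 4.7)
The plan is to realize $\zeta$ as the composition $\widetilde{\D} \xrightarrow{\zeta'} \D' \xrightarrow{\xi} \D'' \to \D$ displayed in the introduction. The rightmost arrow is the canonical projection coming from $\D''$ being defined as the pullback of $\D$ along $\mu$, and $\zeta'$ is a morphism between eigenvarieties for $\widetilde{G}$ with the same weight space, obtained by re-organising the spherical Hecke input in a way compatible with the $L$-morphism $\GL_2 \to \PGL_2$; both $\D'' \to \D$ and $\zeta'$ are structural and exist for formal reasons. The actual content of the theorem is the construction of the middle morphism $\xi: \D' \to \D''$, where the classical Labesse--Langlands transfer has to be interpolated.

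To build $\xi$, I would invoke the general interpolation criterion of Proposition \ref{interpolation}: it suffices to exhibit a Zariski-dense and accumulating subset $Z' \subset \D'(\Qbar_p)$ of classical points, together with a set-theoretic map from $Z'$ to a suitable accumulating set of classical points of $\D''$ that is compatible with the morphism of Hecke algebras induced by $\GL_2 \to \PGL_2$ and that lies over the identity map on $\widetilde{\mathcal{W}}$. The set $Z'$ is taken to be the image under $\zeta'$ of the Zariski-dense set $Z \subset \widetilde{\D}$ of classical points. The map itself is where Proposition \ref{gLC} enters crucially: having fixed $\widetilde{K}$, choose a globally Langlands compatible tame level $K \subset G(\A_f^p)$ and let $\D = \D(K)$. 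For a classical point $z \in Z'$ corresponding to a discrete automorphic representation $\widetilde{\pi}$ of $\widetilde{G}(\A)$ together with a refinement at $p$, global Langlands compatibility and the Labesse--Langlands multiplicity formulas together produce an element $\pi \in \Pi(\widetilde{\pi})$ with $m(\pi)>0$, nonzero $K$-invariants away from $p$, and $\pi_p$ equal to a prescribed element $\tau \in \Pi(\widetilde{\pi}_p)$; choosing $\tau$ so as to carry a refinement compatible with that of $z$ yields a classical point of $\D''$ to assign to $z$.

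The main obstacle is verifying the hypotheses of Proposition \ref{interpolation}. The Hecke compatibility at unramified primes reduces to an explicit computation with the Satake isomorphisms for $\widetilde{G}$ and $G$: because we have transferred $\widetilde{\pi}$ to a $\pi$ in its $L$-packet, the Satake parameters of $\pi$ are obtained from those of $\widetilde{\pi}$ along the $L$-morphism, exactly as required by the interpolation criterion. The accumulation property of the image set in $\D''$ follows from the accumulation of $Z$ in $\widetilde{\D}$ together with the freedom in the choice of the local component $\tau$ at $p$ afforded by the multiplicity formula, which ensures that every classical weight of $\D''$ is approached by points in the image. Once $\xi$ is produced, the composition $\zeta$ is the desired $p$-adic transfer, and the fact that each $z \in Z$ is sent to a classical point of $\D$ is built into the definition of $\xi$ on $Z'$, yielding the final assertion of the theorem.
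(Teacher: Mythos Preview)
Your outline is essentially the paper's proof: realise $\zeta$ as the composite $\widetilde{\D}\xrightarrow{\zeta'}\D'\xrightarrow{\xi}\D''\to\D$, with $\zeta'$ and the projection formal, and build $\xi$ by first defining it on the classical set $Z'=\zeta'(Z)$ via global Langlands compatibility (Proposition~\ref{gLC}) and then interpolating via Proposition~\ref{interpolation}. The classical-to-classical property is indeed built into the construction on $Z'$.

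Two points of clarification. First, you have slightly misstated the hypotheses of Proposition~\ref{interpolation}: it requires only that $Z'$ be Zariski-dense and accumulation \emph{in the source} $\D'$, together with an inclusion $Z'\hookrightarrow\D''(\Qbar_p)$ compatible with weight and Hecke data. There is no requirement that the image of $Z'$ be accumulation in $\D''$, so your paragraph arguing for ``the accumulation property of the image set in $\D''$'' is unnecessary (and would be nontrivial to justify as stated). Second, the fact that $Z'$ is Zariski-dense and accumulation in $\D'$ is not automatic from the corresponding property of $Z$ in $\widetilde{\D}$; the paper proves this separately (Proposition~\ref{Z'e}) using that $\zeta'$ is locally induced by an injection of affinoid algebras. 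You should invoke that result rather than leave it implicit.
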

  
In the proof of the theorem we use two auxiliary eigenvarieties $\D'$ and $\D''$ and ultimately establish the existence of a $p$-adic functoriality morphism $\zeta$ as a composition, as shown in the following commutative diagram
$$\xymatrix{
\widetilde{\D} \ar[d]^{\widetilde{\omega}} \ar[r]^{\zeta'} & \D' \ar[d]^{\omega'} \ar[r]^{\xi} & \ar[d]^{\omega''} \D'' \ar[r] &\D \ar[d]^{\omega} \\
\widetilde{\mathcal{W}} \ar[r]^{\operatorname{id}} & \widetilde{\mathcal{W}} \ar[r]^{\operatorname{id}}  &\widetilde{\mathcal{W}}  \ar[r]^\mu &\mathcal{W} }.$$ 

Here $\D''$ is defined as the pullback of $\D$ along $\widetilde{\mathcal{W}}\rightarrow \mathcal{W}$.
The hard part is establishing the existence of the morphism $\xi$ in the middle, which is where we use the results on the classical transfer.\smallskip\\

There are some parallels of our construction with the classical transfer, which we would like to comment on. In the classical setting starting from an automorphic representation  $\widetilde{\pi}$ of $\widetilde{G}(\A)$ one first passes to an $L$-packet by restricting the local representations $\widetilde{\pi}_l$ to $G(\Q_l)$. Automorphic representations of $\widetilde{G}(\A)$ that give rise to the same $L$-packet are twists of each other. In the second step of the transfer one proves the multiplicity formulas. They allow one to understand which elements of the global $L$-packets are automorphic.

The first eigenvariety $\D'$ corresponds to the restriction process. It is built from the same overconvergent forms as $\widetilde{\D}$ but the underlying Hecke algebra $\mathcal{H}$ is one for~$G$ and the points of $\D'$ correspond to the systems of Hecke eigenvalues restricted to~$\mathcal{H}$. The images of points arising from an automorphic representation $\widetilde{\pi}$ and a twist of it by a Gr{\"o}{\ss}encharacter trivial at infinity agree under the morphism $\zeta'$ from $\widetilde{\D}$ to~$\D'$. We use the family of Galois representations on $\widetilde{\D}$ to study more generally when two points are identified under $\zeta'$ (cf.\ Section \ref{subsubsec: twists}). 

We note however that the choice of our Hecke algebra means that the eigenvariety $\D'$ cannot distinguish between different members of an $L$-packet; the space of automorphic forms that are used in the construction of $\D'$ \textit{sees} different members of an $L$-packet. But $\D'$ itself does not as the $\mathcal{H}$-eigenvalues of any two members of an $L$-packet with correct tame level agree. 

\subsection{Outline of paper}
In Section 2 we collect some background material and establish some auxiliary results. After describing the most important aspects of Buzzard's eigenvariety machine and some key properties of eigenvarieties we recall the general result that we need in the interpolation step. We also establish morphisms between the Hecke algebras and the weight spaces that appear in the construction of the eigenvarieties. The two auxiliary eigenvarieties that we need in the proof of our main theorem are constructed in Section 3.  
In Section 4 we establish the existence of compatible tame levels. Section 5 contains the definition of a $p$-adic functorial transfer and the main theorem.

\subsection{Notation} \label{sec: notation}
For all $p$ fix embeddings $\iota_p:\Qbar \rightarrow \Qbar_p$ and $\iota_\infty:\Qbar \rightarrow \C$. 

Let $X$ be a reduced rigid analytic space. We equip the ring of global analytic functions $\mathcal{O}(X)$ with the coarsest topology such that for any affinoid $Y\subset X$ the induced map $\mathcal{O}(X) \rightarrow \mathcal{O}(Y)$ is continuous. Here $\mathcal{O}(Y)$ carries the Banach-space topology induced from the supremum norm. 

Let $x$ be a point on $X$. A set $\mathcal{V}:=\{V_i: i \in I \} $ of affinoid open neighbourhoods~$V_i$ of $x$ is called a \textit{basis} of affinoid open neighbourhoods of $x$, if for any admissible open $U \subset X$ containing $x$ there exists $i \in I$ such that $V_i \subset U$.  

If $L / \Q_p$ is a finite extension and $X/L$ is a rigid space defined over $L$, then any point $x \in |X|$ is defined over a finite extension $K/L$. We write 
$X(\Qbar_p)$ for the union of the sets $X(K)$ where $K/L$ is finite.

\subsection{Acknowledgments}
The results of this article are part of the author's Ph.D.\ thesis. I would like to thank my supervisor Kevin Buzzard for his patience and his constant support.
I am furthermore grateful to Ga{\"e}tan Chenevier, Toby Gee, David Loeffler and Olivier Ta{\"{\i}}bi for many helpful conversations. Finally I would like to thank the anonymous referee for many helpful comments and for suggesting the elegant argument sketched in Remark \ref{elg}.

\section{Preliminaries on eigenvarieties, Hecke algebras and weight spaces}
\label{chap: background}
\subsection{Generalities on eigenvarietes}
\subsubsection{Buzzard's eigenvariety machine}
The construction of eigenvarieties was axiomatized by Buzzard in \cite{kevin}. We recall the aspects of the eigenvariety machine that we need below. 

Eigenvarieties are constructed relative to a base which usually parametrizes weights. In a first step we assume this base is affinoid. We fix a prime $p$ and a finite extension~$E/\Q_p$.

Let $X$ be a reduced affinoid rigid analytic space over $E$, $M$ a Banach $\mathcal{O}(X)$-module satisfying property (Pr) (see Section 2 of \cite{kevin} for the definition) and let~$\mathbf{T}$ be a commutative $E$-algebra equipped with an $E$-algebra homomorphism $\mathbf{T}\rightarrow \text{End}_{\mathcal{O}(X)}(M)$. Suppose $\phi \in \mathbf{T}$ is an element which acts as a compact endomorphism on $M$. 
\begin{defn} Let $E'$ be a discretely valued extension of $E$. An $E'$-valued system of eigenvalues for $M$ is an $E$-algebra homomorphism $\lambda:\mathbf{T}\rightarrow E'$, such that there exists a point in $X(E')$ (giving a homomorphism $\mathcal{O}(X)\rightarrow E'$) and a nonzero vector $m\in M\widehat{\otimes}_{\mathcal{O}(X)} E'$ with the property that $tm=\lambda(t)m$ for all $t \in \mathbf{T}$. A system of eigenvalues $\lambda$ is called $\phi$-finite if $\lambda(\phi) \neq 0$. 
\end{defn}

\begin{prop}
To the data $(X,M,\mathbf{T},\phi)$ we can associate a reduced separated rigid space $\mathcal{E}_X$ over $E$ endowed with an $E$-algebra homomorphism $\psi: \mathbf{T}\rightarrow \mathcal{O}(\mathcal{E}_X)$ and a morphism of rigid spaces $\omega:\mathcal{E}_X\rightarrow X$ such that 
\begin{enumerate}[(i)]
	\item the morphism $\nu:=(\omega, \psi(\phi)^{-1}):\mathcal{E}_X \rightarrow X\times \mathbb{A}^1$ is finite,
	\item for any open affinoid $V \subset X \times \mathbb{A}^1$, the natural map
$$ \psi \otimes \nu^*: \mathbf{T}\otimes_{E} \mathcal{O}(V) \rightarrow \mathcal{O}(\nu^{-1}(V))$$ 
is surjective.
\item For any discretely valued extension $E'/E$, the map 
$$ \mathcal{E}_X(E')\rightarrow \Hom_{ring}(\mathbf{T}, E') \times X(E'), x \mapsto (\psi_x: h \mapsto \psi(h)(x), \omega(x)) $$
is a bijection between the $E'$-valued points of $\mathcal{E}_X$ and the set of $\phi$-finite $E'$-valued systems of eigenvalues for $M$.
\end{enumerate}
$\mathcal{E}_X$ is uniquely determined by these conditions and is called the eigenvariety of $(X,M,$ $\mathbf{T},\phi)$.
\label{EV}
\end{prop}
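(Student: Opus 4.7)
My plan is to follow the Coleman--Buzzard spectral construction. The strategy is to factor the construction through an intermediate rigid space $Z \subset X \times \mathbb{A}^1$, the \emph{Fredholm hypersurface} of $\phi$, and then to realise $\mathcal{E}_X$ as a finite cover of $Z$ whose structure sheaf encodes the simultaneous $\mathbf{T}$-eigenspaces occurring in $M$.

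First I would form the characteristic power series $P(T) := \det(1 - T \phi \mid M) \in \mathcal{O}(X)\{\{T\}\}$ using Serre's Fredholm theory, which applies because $M$ has property (Pr) and $\phi$ is compact, and let $Z \subset X \times \mathbb{A}^1$ be its zero locus, where the coordinate $T$ on $\mathbb{A}^1$ is intended to correspond to $\psi(\phi)^{-1}$. The key technical input, furnished by Buzzard's formalisation of the Riesz--Coleman theory, is that $Z$ admits an admissible affinoid cover $\{V_i\}$ for which each $Z_i := Z \cap V_i$ is finite flat over its image $U_i := \omega(V_i)$, and for which there is a canonical $\mathbf{T}$-stable decomposition $M \widehat{\otimes}_{\mathcal{O}(X)} \mathcal{O}(U_i) = N_i \oplus N_i'$ with $N_i$ finitely generated projective over $\mathcal{O}(U_i)$ and $\det(1 - T\phi \mid N_i)$ cutting out $Z_i$ in $U_i \times \mathbb{A}^1$. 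The image $\mathbf{T}_i$ of $\mathbf{T}$ in $\End_{\mathcal{O}(U_i)}(N_i)$ is then a finite commutative $\mathcal{O}(U_i)$-algebra, which I regard as a coherent sheaf of $\mathcal{O}_{Z_i}$-algebras.

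Next I would check that on overlaps the $\mathbf{T}_i$ agree -- this follows from the canonicity of the Riesz projector attached to a slope-adapted factorisation of $P(T)$ -- so they glue to a coherent sheaf of $\mathcal{O}_Z$-algebras $\mathcal{T}$, and I define $\mathcal{E}_X$ to be the relative rigid-analytic Spec of $\mathcal{T}$ over $Z$. Property (i) is then built into the construction, since the relative Spec of a coherent sheaf of algebras is finite over the base, and the identification of the $\mathbb{A}^1$-coordinate with $\psi(\phi)^{-1}$ is legitimate because $\phi$ acts invertibly on each $N_i$ with characteristic polynomial vanishing precisely on $Z_i$. Property (ii) is automatic since $\mathbf{T}_i$ is by definition generated by the image of $\mathbf{T}$. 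For (iii), an $E'$-point of $\mathcal{E}_X$ lying over some $V \subset V_i$ unwinds to a ring homomorphism $\mathbf{T}_i \to E'$ together with the underlying base point in $V$; by Nakayama this gives a nonzero simultaneous eigenvector in $N_i \widehat{\otimes} E'$, and conversely any $\phi$-finite system of eigenvalues must occur on some $N_i$ by locating the reciprocal of $\lambda(\phi)$ among the zeros of $P(T)$. Uniqueness then follows formally from (ii) and (iii), as the coherent sheaf $\mathcal{T}$ is pinned down by the finite map to $X \times \mathbb{A}^1$ together with the $\mathbf{T}$-action.

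The chief obstacle is the Riesz step: producing the slope decomposition precisely, verifying its compatibility with the $\mathbf{T}$-action and with base change along the maps $U_i \to X$, and checking that the resulting local data glue canonically. Once this spectral-theoretic infrastructure is in place -- as worked out in Sections 4--5 of \cite{kevin} -- the remainder of the argument is essentially formal.
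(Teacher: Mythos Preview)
Your proposal is correct and follows essentially the same approach as the paper: both defer to Buzzard's construction in Sections 4--5 of \cite{kevin}, with the paper simply citing the relevant lemmas (Lemma 5.3 for (i), the construction for (ii), Lemma 5.9 for (iii), and Proposition 7.2.8 of \cite{BC} for uniqueness) while you spell out the underlying spectral machinery. One small point you glossed over: Buzzard's construction does not directly produce a reduced space, so one must pass to the reduction of the relative Spec of $\mathcal{T}$ at the end, as the paper notes explicitly.
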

\begin{proof} The result is contained in Chapter 5 of \cite{kevin} once we pass to the reduced space of the space constructed there. The finiteness of the morphism in $(i)$ is Lemma 5.3 in loc.\ cit., (ii) follows from the construction,  and (iii) is Lemma 5.9 in \cite{kevin}. The uniqueness is proved as in Proposition 7.2.8 of \cite{BC}.
\end{proof}

\begin{rem} \label{admcover}
Below we need the following details of the construction. Let $Z_\phi \subset X\times \A^1$ be the spectral variety of the compact operator $\phi$ and let $f:Z_\phi \rightarrow X$ denote the canonical projection. In Section 4 of \cite{kevin}, Buzzard constructs an admissible cover~$\mathcal{C}$ of $Z_\phi$. $\mathcal{C}$ consists of affinoid subdomains $Y\subset Z_\phi$ satisfying the following properties: There exists $U \subset X$ affinoid open such that $Y \subset f^{-1}(U)$, the induced map $f:Y\rightarrow U$ is finite and surjective, and $Y$ is disconnected from its complement in~$f^{-1}(U)$. The eigenvariety $\mathcal{E}_X$ is constructed locally over elements of $\mathcal{C}$. If for $Y\in \mathcal{C}$ we denote by $\mathcal{E}(Y)$ the piece of the eigenvariety above $Y$, then $\mathcal{E}_X$ is admissibly covered by the~$\mathcal{E}(Y)$, which is an affinoid $\operatorname{Sp}(\mathbf{T}(Y))$ constructed as follows.

Assume $U=f(Y)$ is connected, then $\mathbf{T}(Y)$ is $(\mathbf{T}(Y)^0)^{red}$, where $\mathbf{T}(Y)^0$ is the image of $\mathbf{T}\otimes_E \mathcal{O}(U)$ in the endomorphism ring $\operatorname{End}_{\mathcal{O}(U)}((M\widehat{\otimes}_{\mathcal{O}(X)}\mathcal{O}(U))_{fs})$. Here we write $fs$ for the finite slope part of $M\widehat{\otimes}_{\mathcal{O}(X)}\mathcal{O}(U)$ by which we mean the module denoted by $N$ in Theorem 3.3 and Section 5 of \cite{kevin}. It is a direct summand of $M\widehat{\otimes}_{\mathcal{O}(X)}\mathcal{O}(U)$, projective of finite rank, invariant under the action of $\mathbf{T}$ and $\phi$ acts invertibly on it. We refer to Sections 3 and 5 of \cite{kevin} for the details regarding the construction. 

For general $Y\in \mathcal{C}$, the image $f(Y)$ might not be connected but we can write $Y$ as a finite disjoint union $Y=\bigsqcup Y_i$, with $Y_i$ such that $f(Y_i)$ is connected, then $\mathcal{E}(Y)$ is defined to be the disjoint union of the $\mathcal{E}(Y_i)$. 
\end{rem}

As in our applications the base space is not necessarily affinoid, we briefly recall the existence theorem of a global eigenvariety. 
For that let $\mathcal{W}$ be a reduced separated rigid analytic space defined over $E$ and as before let $\mathbf{T}$ be a commutative $E$-algebra and $\phi \in \mathbf{T}$ a fixed element. Assume for any affinoid open $X \subset \mathcal{W}$ we are given a Banach $\mathcal{O}(X)$-module $M_X$ as above, i.e. satisfying property (Pr) together with an $E$-algebra homomorphism $\mathbf{T}\rightarrow \text{End}_{\mathcal{O}(X)}(M_X)$ denoted $t\mapsto t_X$, such that $\phi_X$ is compact. Assume furthermore that if $Y\subset X \subset \mathcal{W}$ are two admissible affinoid opens we have a link $\alpha_{XZ}:M_Y \rightarrow M_X \widehat{\otimes}_{\mathcal{O}(X)}\mathcal{O}(Y)$ and that these maps satisfy the compatibility condition $\alpha_{XZ}=\alpha_{XY}\circ \alpha_{YZ}$ whenever $X,Y,Z$ are affinoids with $Z\subset Y\subset X\subset \mathcal{W}$ (for the definition of link see Section 5 of \cite{kevin}). 
We abbreviate this data as the tuple $(\mathcal{W}, M_\bullet, \alpha_\bullet, \mathbf{T}, \phi)$ and call it an eigenvariety datum.
In this global setting we say that a morphism $\lambda: \mathbf{T}\rightarrow E'$ (for $E'/E$ a discretely valued extension) with $\lambda(\phi)\neq 0 $ is a $\phi$-finite $E'$-valued system of eigenvalues for $M_\bullet$ if there exists an affinoid $X \subset \mathcal{W}$, a point in $X(E')$ (and therefore a map $\mathcal{O}(X)\rightarrow E')$ and $0\neq m \in M_X \widehat{\otimes}_{\mathcal{O}(X)} E'$ such that $t\cdot m = \lambda(t)m $ for all $t\in \mathbf{T}$.  

\begin{thm}
To the above data we may canonically associate the eigenvariety $\mathcal{E}$, a reduced separated rigid space over $E$ equipped with an analytic map $\omega:\mathcal{E}\rightarrow \mathcal{W}$, with the property that for any affinoid open $X\subset \mathcal{W}$, the pullback of $\mathcal{E}$ to $X$ is canonically isomorphic to the eigenvariety associated to the datum $(X,M_X,\mathbf{T},\phi)$.
The global eigenvariety $\mathcal{E}$ comes equipped with a ring homomorphism $\psi:\mathbf{T} \rightarrow \mathcal{O}(\mathcal{E})$ such that the following conditions are satisfied:
\begin{enumerate}[(i)]
	\item The morphism $\nu:=(\omega, \psi(\phi)^{-1}):\mathcal{E} \rightarrow \mathcal{W}\times \mathbb{A}^1$ is finite.
  \item For any open affinoid $V \subset \mathcal{W}\times \mathbb{A}^1$, the natural map
$$ \psi \otimes \nu^*: \mathbf{T}\otimes_{E} \mathcal{O}(V) \rightarrow \mathcal{O}(\nu^{-1}(V))$$ 
is surjective.
\item For any discretely valued extension $E'/E$, the map 
$$ \mathcal{E}(E')\rightarrow \Hom_{ring}(\mathbf{T}, E') \times \mathcal{W}(E'), x \mapsto (\psi_x: h \mapsto \psi(h)(x), \omega(x)) $$
is a bijection between the $E'$-valued points of $\mathcal{E}$ and the set of $\phi$-finite $E'$-valued systems of eigenvalues for $M_\bullet$.
\end{enumerate}
Furthermore any point $x \in \mathcal{E}(\Qbar_p)$ has a basis of open affinoid neighbourhoods $V$ of~$x$ such that $\omega(V) \subset \mathcal{W}$ is open affinoid, the morphism $\omega|_V:V\rightarrow \omega(V)$ is finite, and surjective when restricted to any irreducible component of $V$.  
\label{gEV}
\end{thm}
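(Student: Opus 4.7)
The plan is to construct $\mathcal{E}$ by gluing the local eigenvarieties $\mathcal{E}_X$ provided by Proposition \ref{EV}, and then to deduce the global properties $(i)$--$(iii)$ from their local affinoid counterparts together with the uniqueness statement in Proposition \ref{EV}.

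First I would fix an admissible affinoid cover $\{X_i\}_{i\in I}$ of $\mathcal{W}$. For each $i$, the datum $(X_i, M_{X_i}, \mathbf{T}, \phi)$ satisfies the hypotheses of Proposition \ref{EV}, so it produces a reduced rigid space $\mathcal{E}_{X_i}$ with structure map $\omega_i: \mathcal{E}_{X_i}\to X_i$ and a ring homomorphism $\psi_i: \mathbf{T}\to \mathcal{O}(\mathcal{E}_{X_i})$. To glue these pieces, for an inclusion of affinoid opens $Y\subset X$ in $\mathcal{W}$ I would show that the pullback $\mathcal{E}_X\times_X Y$ is canonically isomorphic to $\mathcal{E}_Y$. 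The key point is that the link $\alpha_{XY}:M_Y\to M_X\widehat{\otimes}_{\mathcal{O}(X)}\mathcal{O}(Y)$ induces a bijection between $\phi$-finite systems of eigenvalues for $M_Y$ lying over points of $Y$ and $\phi$-finite systems of eigenvalues for $M_X$ lying over the same points (this is precisely how links are set up in \cite{kevin}). Hence $\mathcal{E}_X\times_X Y$ satisfies the defining property of $\mathcal{E}_Y$, and by the uniqueness part of Proposition \ref{EV} we obtain a canonical isomorphism $\iota_{XY}:\mathcal{E}_X\times_X Y \xrightarrow{\sim} \mathcal{E}_Y$ compatible with the maps from $\mathbf{T}$. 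The cocycle condition $\alpha_{XZ}=\alpha_{XY}\circ \alpha_{YZ}$ together with uniqueness then forces $\iota_{YZ}\circ \iota_{XY}|_Z = \iota_{XZ}$, so the $\mathcal{E}_{X_i}$ may be glued along the overlaps $\mathcal{E}_{X_i}\times_{\mathcal{W}}(X_i\cap X_j)$ to produce a reduced separated rigid space $\mathcal{E}$ equipped with $\omega:\mathcal{E}\to\mathcal{W}$. The ring homomorphisms $\psi_i$ are compatible under the gluing, so they assemble into a global $\psi:\mathbf{T}\to \mathcal{O}(\mathcal{E})$.

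Next I would verify $(i)$--$(iii)$. Finiteness in $(i)$ is local on the target, so it follows from the corresponding local statement in Proposition \ref{EV}$(i)$ applied on each $X_i\times \mathbb{A}^1$. Surjectivity in $(ii)$ is also checked on affinoid opens $V$ of $\mathcal{W}\times\mathbb{A}^1$, and by refining $V$ we may assume $V\subset X_i\times \mathbb{A}^1$ for some $i$, reducing to the local case. For $(iii)$, a $\phi$-finite system of eigenvalues $\lambda$ for $M_\bullet$ is by definition witnessed on some affinoid $X\subset \mathcal{W}$, so it determines an $E'$-point of $\mathcal{E}_X$ and hence of $\mathcal{E}$; conversely any $E'$-point of $\mathcal{E}$ lies over some $X_i$ and gives a $\phi$-finite system of eigenvalues. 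The bijection is the one coming from the local case, and it does not depend on the choice of affinoid witnessing $\lambda$ because of the compatibility provided by the links.

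Finally, for the last assertion about bases of affinoid neighbourhoods, I would fix $x\in \mathcal{E}(\Qbar_p)$ and choose an affinoid $X\subset \mathcal{W}$ containing $\omega(x)$, so that $x$ lies in $\mathcal{E}_X$. Using Remark \ref{admcover}, $\mathcal{E}_X$ is admissibly covered by affinoids $\mathcal{E}(Y)$ for $Y$ in the cover $\mathcal{C}$ of the spectral variety $Z_\phi\subset X\times\mathbb{A}^1$; each such $\mathcal{E}(Y)$ projects finitely and surjectively onto an affinoid open $U\subset X$. Choosing a piece containing $x$ and shrinking $U$ within $X$ (using that affinoid subdomains of $U$ with the finite-and-surjective property form a basis, by pulling back along $\mathcal{E}(Y)\to U$ which is finite) gives the desired basis of neighbourhoods $V$ with $\omega(V)$ affinoid open and $\omega|_V$ finite. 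Surjectivity on irreducible components follows since a finite map from a reduced affinoid is surjective on an irreducible component if it hits the generic point, which can be arranged by further shrinking.

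The main obstacle I anticipate is the careful bookkeeping in the gluing step: one must use the uniqueness in Proposition \ref{EV} at each stage to ensure the isomorphisms $\iota_{XY}$ are canonical and that they satisfy the cocycle condition forced by the link compatibility $\alpha_{XZ}=\alpha_{XY}\circ\alpha_{YZ}$. Once that is in place, the remaining assertions are essentially formal consequences of the local theorem and Remark \ref{admcover}.
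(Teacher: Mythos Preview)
Your approach is essentially the one the paper defers to: the paper's proof simply cites Buzzard's Construction~5.7 and Lemma~5.9 for the existence and (i)--(iii), and Ta\"{\i}bi's Theorem~3.1.1 for the last paragraph. Your gluing argument via links and the uniqueness clause of Proposition~\ref{EV} is exactly how Buzzard's construction proceeds, and your reductions of (i)--(iii) to the affinoid case are correct.

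The one weak point is your justification of surjectivity on irreducible components in the final paragraph. The claim ``a finite map from a reduced affinoid is surjective on an irreducible component if it hits the generic point, which can be arranged by further shrinking'' is not quite an argument: shrinking $U$ does not obviously force every irreducible component of the preimage through $x$ to dominate $U$. The actual mechanism (which is what Ta\"{\i}bi uses) is that the spectral variety $Z_\phi\subset X\times\mathbb{A}^1$ is a Fredholm hypersurface, hence flat and equidimensional over $X$; since $\mathcal{E}(Y)$ is finite over a piece $Y$ of $Z_\phi$ and $Y\to U$ is finite flat, every irreducible component of $\mathcal{E}(Y)$ has full dimension and therefore surjects onto the (smooth, irreducible) affinoid $U$. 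Once you invoke this, shrinking $U$ to a basis of affinoid neighbourhoods of $\omega(x)$ and pulling back gives the desired basis of $V$'s.
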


\begin{proof} The existence is Construction 5.7 of \cite{kevin} and (i) follows immediately from it and Theorem \ref{EV}(i). Point (ii) follows from the construction and (iii) is Lemma 5.9 in \cite{kevin}. For the proof of the last paragraph we refer to the proof of Theorem 3.1.1 of \cite{taibi}. 
\end{proof}

\begin{lem}[cf.\ Lemma 7.2.7 of \cite{BC}]\label{7.2.7}
Let $\mathcal{E}$ be an eigenvariety.
\begin{enumerate}[(a)]
\item $\mathcal{E}$ is an admissible increasing union of open affinoids of the form $\nu^{-1}(V)$ for $V \subset \mathcal{W}\times \A^1$ open affinoid. In particular, any two closed points of $\mathcal{E}$ lie in such an open affinoid.
\item For any $x,y \in \mathcal{E}(\overline{\Q}_p), x=y $ if and only if $\psi_x=\psi_y$ and $\omega(x)=\omega(y)$.
\end{enumerate}
\end{lem}

\begin{defn} Let $X$ be a reduced rigid analytic space.
\begin{enumerate}[(i)]
	\item A subset $Z \subset |X|$ is called Zariski-dense if the only analytic subset of $X$ which contains $Z$ is $X$ itself.
	\item We say a subset $Z\subset |X|$ accumulates at $x \in |X|$ if there exists a basis of affinoid open neighbourhoods $U$ of $x$ such that $Z \cap |U|$ is Zariski-dense in $U$. A subset $Z\subset |X|$ is called an accumulation subset if $Z$ accumulates at any $z \in Z$.  
\end{enumerate}
\end{defn}

\begin{lem}
Let $X$ be a rigid analytic space and $T$ an irreducible component of $X$. Assume $Z$ is an accumulation subset and that $ Z\cap |T|$ is non-empty. Then $Z \cap |T| $ is Zariski-dense in $T$.
\label{2.6}
\end{lem}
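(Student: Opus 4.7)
My plan is to fix a point $z\in Z\cap |T|$, use the accumulation of $Z$ at $z$ to pass to a Zariski-dense statement on a small affinoid neighborhood, and then exploit the local structure of irreducible components together with the irreducibility of $T$ to globalize. By the accumulation property combined with the standard local description of irreducible components in a reduced rigid analytic space (e.g.\ as developed by Conrad), I expect to find an affinoid neighborhood $U$ of $z$ satisfying simultaneously: (a) $Z\cap |U|$ is Zariski-dense in $U$, and (b) $U\cap T$ is a union of some of the irreducible components of $U$. The compatibility of the two bases of neighborhoods is unproblematic, since property (b) is hereditary under further affinoid restriction.

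Given such a $U$, write $U=D_1\cup\cdots\cup D_k$ as its irreducible decomposition, and let $W_i$ be the Zariski-closure of $Z\cap D_i$ in $D_i$. Since the Zariski-closure of $Z\cap U$ in $U$ equals $\bigcup_i W_i$ (the closure of a finite union is the finite union of closures) and this equals $U$, the irreducibility of the $D_j$ together with the fact that $D_i\not\subseteq D_j$ for $i\neq j$ forces $W_i=D_i$ for every $i$: otherwise the irreducible set $D_i$ would be covered by the proper closed subsets $D_i\cap W_j$, yielding some containment $D_i\subseteq D_j$, contradicting the distinctness of irreducible components. Consequently $Z\cap(U\cap T)$ is Zariski-dense in $U\cap T$.

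For the final step, let $W$ denote the Zariski-closure of $Z\cap(U\cap T)$ in $T$. Then $W$ is a closed analytic subset of $T$ containing the non-empty admissible open $U\cap T$ of $T$. Since $T$ is irreducible (hence equidimensional), any proper closed analytic subset of $T$ has strictly smaller dimension and therefore cannot contain a non-empty admissible open. Thus $W=T$, and since $W$ is contained in the Zariski-closure of $Z\cap |T|$ in $T$, this closure equals $T$ as well, giving the desired Zariski-density. The main subtlety is establishing (b) alongside (a); the remainder is a formal manipulation of closures combined with the standard dimension-theoretic fact that a proper closed analytic subset of an irreducible equidimensional rigid space cannot contain a non-empty admissible open.
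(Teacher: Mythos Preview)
Your proof is correct and follows essentially the same route as the paper's: pick $z\in Z\cap|T|$, choose an affinoid $U$ where $Z\cap|U|$ is Zariski-dense, invoke Conrad's result that $T\cap U$ is a union of irreducible components of $U$, deduce density in $T\cap U$, and conclude by the fact that a non-empty admissible open of an irreducible rigid space is Zariski-dense. The paper compresses your step (3) into the single remark that in a noetherian affinoid a Zariski-dense subset is dense in each irreducible component, and phrases your dimension argument in step (4) as ``the points of any affinoid in an irreducible rigid space are Zariski dense in it''; also note that by Conrad's Corollary 2.2.9 property (b) holds for \emph{every} admissible open, so there is no compatibility issue to worry about at all.
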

\begin{proof} By assumption there exists $z \in Z \cap |T|$ and an affinoid neighbourhood $U$ of $z$ in $X$ such that $Z \cap |U| $ is Zariski-dense in $U$. By Corollary 2.2.9 of \cite{conrad}, $T\cap U$ is a union of irreducible components of $U$. $U=\operatorname{Sp}(A)$ is affinoid and so $A$ is noetherian. Therefore any Zariski-dense subset of $U$ is Zariski-dense in each irreducible component of $U$. So $Z\cap |T|\cap |U| $ is Zariski-dense in $T\cap U$. This shows the lemma as the points of any affinoid in an irreducible rigid space are Zariski-dense in it. 
\end{proof}

\begin{defn}\label{evforz} Assume that $(\mathcal{W}, M_\bullet, \alpha_\bullet, \mathbf{T}, \phi)$ is an eigenvariety datum and let $\mathcal{E}$ be the eigenvariety associated to it. Let $\mathcal{Z}\subset \Hom_{ring}(\mathbf{T},\Qbar_p) \times \mathcal{W}(\Qbar_p)$ be any subset. We say $\mathcal{E}$ is an eigenvariety for $\mathcal{Z}$ if there exists an accumulation and Zariski-dense subset $Z \subset \mathcal{E}(\Qbar_p)$ such that the natural evaluation map $\mathcal{E}(\Qbar_p)\rightarrow \Hom_{ring}(\mathbf{T},\Qbar_p)$, $ x \mapsto \psi_x: h \mapsto \psi(h)(x)$
induces a bijection $$Z\stackrel{\sim}{\longrightarrow} \mathcal{Z}, z \mapsto (\psi_z, \omega(z)).$$
\end{defn}

We now use the arguments of \cite{BC} to show how to extend certain maps defined on an accumulation and Zariski-dense subset to morphisms between eigenvarieties. 
Let~$\mathcal{E}$ be an eigenvariety. For any admissible open $V \subset \mathcal{W}\times \mathbb{A}^1$ set $\mathcal{E}_V:= \nu^{-1} (V)$ and let~$\A_V$ denote the affine line over $V$. For each finite set $I \subset \mathbf{T}$ there is a natural morphism defined over $V$
$$ f_{V,I}:\mathcal{E}_V \rightarrow \A_V^{I}, x \mapsto (\psi(h)(x))_{h\in I}.$$
The morphisms commute with any base change by an open immersion $V'\subset V$. Furthermore the $f_{V,I}$ are finite as the morphism $\A_V^{I}\rightarrow V $ is separated and $\nu$ is finite. 
\begin{lem}\label{technical}
Assume that $V$ is affinoid. Then there exists a finite set $I_V \subset \mathbf{T}$ such that for any finite subset $I \subset \mathbf{T}$ with $I\supset I_V$ the morphism $f_{V,I}$ is a closed immersion. 
\end{lem}
\begin{proof} For this let $X:= \mathcal{E}_V$. $\mathcal{O}(X)$ is a finite $\mathcal{O}(V)$-module as $\nu$ is finite. Pick generators $m_1,...,m_l \in \mathcal{O}(X)$. Then by Theorem \ref{gEV} (ii) 	
$$\psi \otimes \nu^*:\mathbf{T}\otimes_E \mathcal{O}(V) \rightarrow \mathcal{O}(X) $$ is surjective and we can write 
$$ m_j = \psi \otimes \nu^*\left(\sum_{k=1}^{n_j} h_{jk} \otimes \lambda_{jk} \right).$$
Choose $I_V= \{h_{jk} \in \mathbf{T}: j=1,...,l \ , k=1,...,n_j \}$.  
We show that $f:=f_{V,I_V}$ is a closed immersion by showing that there exists an admissible affinoid cover $\{U_i\}_{i \in J}$ of~$\A^{I_V}_V$ such that for all $ i\in J$, $f^{-1}(U_i)$ is affinoid and $\mathcal{O}(U_i)\rightarrow \mathcal{O}(f^{-1}(U_i))$ is surjective.
 
Let $|\cdot|$ denote the norm on the affinoid algebra $\mathcal{O}(X)$. Pick $C \in \R$ such that 
$$ |\psi(h_{jk}) | \leq C \ \text{ for all } h_{jk} \in I_V . $$
Let $\varpi$ be a uniformizer of $E$. For all $N\in \Z$ such that $C < |\varpi |^{-N} $ we have 
 $$Im(f) \subset B^{I_V}_V(0,\varpi^{-N}):= \operatorname{Sp}\left(\mathcal{O}(V) \otimes_E E \left\langle \varpi^N T_1,...,\varpi^N T_m\right\rangle\right),$$ 
  where $m= |I_V|$ and furthermore the map on rings 
$$ \mathcal{O}(V) \otimes_E E \left\langle \varpi^N T_1,...,\varpi^N T_m\right\rangle \rightarrow \mathcal{O}(X)$$
 is surjective as its image contains the generators $m_1,...,m_l$. The discs $B^{I_V}_V(0,\varpi^{-N})$ such that $C < |\varpi |^{-N} $ admissibly cover $\A^{|I_V|}_V$ and so we see that $f$ is indeed a closed immersion. It is clear that for all $I\supset I_V$, $f_{I,V}$ is also a closed immersion. 
\end{proof}

The following proposition is the crucial ingredient in the interpolation step of our proof of the main theorem. 

\begin{prop}\label{interpolation}
Let $(\mathcal{W}, M_{1,\bullet}, \alpha_{1,\bullet}, \mathbf{T}, \phi)$ and $(\mathcal{W}, M_{2,\bullet}, \alpha_{2,\bullet}, \mathbf{T}, \phi)$ be two eigenvariety data giving rise to eigenvarieties $\mathcal{E}_1$ and $\mathcal{E}_2$ equipped with morphisms $\psi_i: \mathbf{T} \rightarrow \mathcal{O}(\mathcal{E}_i)$ and $\omega_i:\mathcal{E}_i \rightarrow \mathcal{W}$ for $i=1,2$. Let $Z\subset \mathcal{E}_1(\Qbar_p)$ be an accumulation and Zariski-dense set of points. 

Assume we have an inclusion $ \zeta: Z \hookrightarrow \mathcal{E}_2(\Qbar_p)$ compatible with the maps $\omega_i$ to~$\mathcal{W}$ and such that $\psi_{1,z} = \psi_{2,\zeta(z)} $ for all $z \in Z$. Then $\zeta$ extends to a unique closed immersion $\zeta:\mathcal{E}_1\hookrightarrow \mathcal{E}_2,$ such that the following diagrams commute: 
$$\xymatrix{
\mathcal{E}_1 \ar[rd]_{\omega_1}\ar[rr]^{\zeta} & & \ar[ld]^{\omega_2}\mathcal{E}_2 \\
&  \mathcal{W}  &  }
\hspace{1cm}\xymatrix{
& \mathbf{T} \ar[ld]_{\psi_2} \ar[rd]^{\psi_1} &\\
\mathcal{O}(\mathcal{E}_2) \ar[rr]_{\zeta^*} & & \mathcal{O}(\mathcal{E}_1) .}
 $$  
\end{prop}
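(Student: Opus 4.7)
The strategy is to realise $\zeta$ as the second projection from a suitable closed reduced analytic subspace of the fibre product
$$\mathcal{E}_3 := \mathcal{E}_1 \times_{\mathcal{W} \times \mathbb{A}^1} \mathcal{E}_2,$$
formed with respect to the finite morphisms $\nu_i = (\omega_i, \psi_i(\phi)^{-1})$ provided by Theorem \ref{gEV}(i). Since $\nu_1$ and $\nu_2$ are finite, both projections $p_i : \mathcal{E}_3 \to \mathcal{E}_i$ are finite by base change. For every $z \in Z$ the pair $(z, \zeta(z))$ lies in $\mathcal{E}_3$: the hypotheses on $\zeta$ give $\omega_1(z) = \omega_2(\zeta(z))$, and applying the identity $\psi_{1,z} = \psi_{2,\zeta(z)}$ to $\phi$ yields $\psi_1(\phi)(z) = \psi_2(\phi)(\zeta(z))$, so that $\nu_1(z) = \nu_2(\zeta(z))$. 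Let $\Gamma \subset \mathcal{E}_3$ be the reduced Zariski closure of the set of all such points $(z,\zeta(z))$.

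The heart of the argument is to show that $p_1$ restricts to an isomorphism $\Gamma \xrightarrow{\sim} \mathcal{E}_1$. Finiteness forces the image of $p_1|_\Gamma$ to be closed, and since this image contains the Zariski-dense subset $Z$ it must be all of $\mathcal{E}_1$. To promote surjectivity to an isomorphism, one invokes Theorem \ref{gEV}(ii): for any affinoid open $V \subset \mathcal{W} \times \mathbb{A}^1$, the ring $\mathcal{O}(\nu_2^{-1}(V))$ is generated over $\mathcal{O}(V)$ by $\psi_2(\mathbf{T})$, so $\mathcal{O}(p_1^{-1}(\nu_1^{-1}(V))) = \mathcal{O}(\nu_1^{-1}(V)) \otimes_{\mathcal{O}(V)} \mathcal{O}(\nu_2^{-1}(V))$ is generated as an $\mathcal{O}(\nu_1^{-1}(V))$-algebra by the functions $p_2^*\psi_2(t)$, $t \in \mathbf{T}$. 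By the Hecke compatibility, every function $p_1^*\psi_1(t) - p_2^*\psi_2(t)$ vanishes at each $(z,\zeta(z))$, and as $\Gamma$ is reduced it therefore vanishes on all of $\Gamma$. Consequently $\mathcal{O}(\nu_1^{-1}(V)) \to \mathcal{O}(\Gamma \cap p_1^{-1}(\nu_1^{-1}(V)))$ is surjective, and as the subsets $\nu_1^{-1}(V)$ form an admissible affinoid cover of $\mathcal{E}_1$ this shows $p_1|_\Gamma$ is a closed immersion. Being also surjective onto the reduced space $\mathcal{E}_1$, it is an isomorphism.

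Define $\zeta := p_2 \circ (p_1|_\Gamma)^{-1} : \mathcal{E}_1 \to \mathcal{E}_2$. Commutativity of the two diagrams is immediate from the identities $\nu_2 \circ p_2 = \nu_1 \circ p_1$ and $p_1^*\psi_1 = p_2^*\psi_2$ on $\Gamma$. To verify that $\zeta$ itself is a closed immersion, apply Theorem \ref{gEV}(ii) a second time: the composition
$$\mathbf{T} \otimes_E \mathcal{O}(V) \twoheadrightarrow \mathcal{O}(\nu_2^{-1}(V)) \xrightarrow{\zeta^*} \mathcal{O}(\nu_1^{-1}(V))$$
equals $\psi_1 \otimes \nu_1^*$, which is surjective, forcing $\zeta^*$ to be surjective on affinoid pieces of this form. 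Finally, uniqueness is automatic: any two closed immersions extending $\zeta|_Z$ agree on the Zariski-dense set $Z$ and, since $\mathcal{E}_2$ is separated, must agree globally. The main obstacle is the middle step, where Theorem \ref{gEV}(ii) is used to bootstrap pointwise Hecke compatibility on $Z$ to a scheme-theoretic identification of function rings on $\Gamma$; the remainder of the argument is essentially formal.
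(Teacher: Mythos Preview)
Your proof is correct and follows the standard route that underlies the argument of Bella\"{\i}che--Chenevier (Prop.\ 7.2.8 in \cite{BC}), which is precisely what the paper cites. The fibre-product/graph construction you use is exactly the mechanism behind that argument, so there is no substantive divergence.

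Two remarks are worth making. First, a pleasant feature of the way you have organised the argument: you never actually invoke the accumulation hypothesis on $Z$, only Zariski-density. The point is that the identities $p_1^*\psi_1(t)=p_2^*\psi_2(t)$ are checked once and for all as equalities of \emph{global} functions on the reduced space $\Gamma$, where $\{(z,\zeta(z))\}$ is Zariski-dense by construction; this sidesteps the need to verify that $Z$ is Zariski-dense in each local piece $\nu_1^{-1}(V)$, which is where a more direct kernel-comparison argument would appeal to accumulation. Second, your uniqueness clause is phrased for morphisms extending $\zeta|_Z$, whereas the proposition asserts uniqueness among closed immersions making the two diagrams commute. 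These coincide: any morphism making the diagrams commute satisfies $\psi_{2,\zeta'(x)}=\psi_{1,x}$ and $\omega_2(\zeta'(x))=\omega_1(x)$ for every $x$, so by Lemma~\ref{7.2.7} it already agrees with your $\zeta$ pointwise (in particular on $Z$), and then reducedness and separatedness finish as you say. The paper invokes Lemma~\ref{7.2.7} directly for uniqueness, which is slightly quicker.
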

\begin{proof} The arguments are identical to the ones used to prove Proposition 7.2.8 in \cite{BC}.
The uniqueness follows from Lemma \ref{7.2.7} and the fact that the $\mathcal{E}_i$ are reduced and separated.
Let $V \subset \mathcal{W} \times \mathbb{A}^1$ be an open affinoid. For $i=1,2$, let $\mathcal{E}_{i,V}:= \nu_i^{-1}(V)$. By Lemma \ref{technical} there exists a finite set $I\subset \mathbf{T}$, such that the natural maps
$$ f_{i,V,I}:\mathcal{E}_{i,V} \rightarrow \A_V^{I}, x \mapsto (\psi_i(h)(x))_{h\in I}$$  
are both closed immersions.
We claim that $f_{1,V,I}(\mathcal{E}_{1,V}) \subset f_{2,V,I}(\mathcal{E}_{2,V})$. For that let $x \in \mathcal{E}_{1,V}$. If $x \in Z$, then $f_{1,V,I}(x)=f_{2,V,I}(\zeta(x))$ by the hypothesis on $\zeta$. $Z$ is Zariski-dense on each irreducible component of $\mathcal{E}_1$, so there exists $z \in Z$ such that $z$ lies on the same irreducible component of $\mathcal{E}_1$ as $x$. By Lemma \ref{7.2.7}(a) and \cite[Lemma 7.2.9]{BC}, we can find an open affinoid $V'\supset V$ such that $z \in \mathcal{E}_{1,V'}\cap Z$ lies on the same irreducible component $T$ of $\mathcal{E}_{1,V'}$ as $x$. By the accumulation property (see Lemma \ref{2.6}) $Z$ is Zariski-dense in $T$, hence for $I'\supset I\cup I_{V'},$
$$ f_{1,V',I'}(T) \subset f_{2,V',I'}(\mathcal{E}_{2,V'})$$
in particular $f_{1,V,I'}(x) \in f_{2,V,I'}(\mathcal{E}_{2,V})$. By projecting to $\mathbb{A}_V^{|I|}$ we get that the same is true for $I'=I$. 
We can therefore define a closed immersion $\zeta_V: \mathcal{E}_{1,V}\rightarrow  \mathcal{E}_{2,V} $ by setting for $I \supset I_V$ 
$$ \zeta_V:= f_{2,V,I}^{-1}\circ f_{1,V,I}$$
This map does not depend on $I$, which implies the commutativity of the diagram on the right. It extends the map $\zeta: Z \cap \mathcal{E}_{1,V}\rightarrow \mathcal{E}_{2,V}$.

If $U \subset V$ is any affinoid open, then $\zeta_V \times_V U = \zeta_U$, hence the $\zeta_V$ glue to a closed immersion $\zeta:\mathcal{E}_1 \hookrightarrow \mathcal{E}_2$. The commutativity of the diagram on the left is clear as for any $V$, $\zeta_V$ is a morphism over $V$.
\end{proof}

\subsection{Hecke Algebras}
In this section we show the existence of an injective morphism between the Hecke algebras for $G$ and $\widetilde{G}$ that appear in the construction of the eigenvarieties. These algebras are the tensor product of an unramified Hecke algebra and a so called Atkin-Lehner algebra at $p$. We treat the unramified Hecke algebra first. 

\subsubsection{Unramified Hecke algebras}
For a locally compact totally disconnected group~$H$ and a compact open subgroup $K$, let $\mathcal{H}(H,K)$ be the Hecke algebra of locally constant compactly supported $\C$-valued functions on $H$ that are bi-invariant under~$K$.

For any prime $l$, the $L$-homomorphism $\GL_2(\Qbar) \rightarrow \PGL_2(\Qbar)$ naturally induces a map of unramified Hecke algebras $$\mathcal{H}(\SL_2(\Q_l),\SL_2(\Z_l))\rightarrow \mathcal{H}(\GL_2(\Q_l),\GL_2(\Z_l)).$$

In order to see that this morphism can be defined over $\Q$ we construct it directly using the Satake isomorphism.
For that let $\widetilde{T}\subset \GL_2(\Q_l)$ be the torus of diagonal matrices and $\widetilde{T}_0:=\widetilde{T} \cap \GL_2(\Z_l)$. 
Similarly $T \subset \SL_2(\Q_l)$ denotes the torus of diagonal matrices and we define $T_0:=T \cap \SL_2(\Z_l)$.
As $\widetilde{T}$ is abelian we have
$$\mathcal{H}(\widetilde{T},\widetilde{T}_0) \cong \C[\widetilde{T}/\widetilde{T}_0] \cong \C[\Z^2],$$
where the last isomorphism comes from the isomorphism $\Q_l^*/\Z_l^*\cong \Z$ induced from the valuation $v:\Q_l^*\rightarrow \Z, v(l)=1$.
We identify $\C[\Z^2]$ with $\C[X,Y,X^{-1},Y^{-1}]$ by sending $(1,0) \in \Z^2$ to $X$ and $(0,1)$ to $Y$ and
denote the resulting isomorphism $\mathcal{H}(\widetilde{T},\widetilde{T}_0) \rightarrow \C[X,Y,X^{-1},Y^{-1}]$ by $\widetilde{\alpha}$.
Similarly $\alpha: \mathcal{H}(T,T_0) \rightarrow \C[Z,Z^{-1}]$ is defined as the composite of 
$$\mathcal{H}(T,T_0) \cong \C[T/T_0] \cong \C[\Z] \cong \C[Z,Z^{-1}].$$
The inclusion $T\subset \widetilde{T}$ induces the homomorphism $\Z \hookrightarrow \Z^2, n \mapsto (n,-n)$, and therefore a map of $\C$-algebras
\begin{eqnarray*}\psi: \C[Z,Z^{-1}] & \hookrightarrow & \C[X,Y,X^{-1},Y^{-1}]\\
Z & \mapsto & XY^{-1}.
\end{eqnarray*}
The morphism $\psi$ restricts to a morphism of invariants under the Weyl group $W \cong \Z/2\Z $ of the tori $\widetilde{T}$ and $T$: 
$$\psi: \C[Z,Z^{-1}]^W \hookrightarrow \C[X,Y,X^{-1},Y^{-1}]^W. $$

Define a monomorphism of $\C$-algebras  
$$\lambda_l: \mathcal{H}(\SL_2(\Q_l),\SL_2(\Z_l))\rightarrow \mathcal{H}(\GL_2(\Q_l),\GL_2(\Z_l))$$
 $$\lambda_l:=(\widetilde{\alpha} \circ \widetilde{\mathcal{S}})^{-1} \circ \psi \circ \alpha \circ \mathcal{S}.$$
Here $\mathcal{S}$ is the Satake isomorphism for $\SL_2(\Q_l)$
\begin{equation*}
\begin{aligned}
\mathcal{S}: \mathcal{H}(\SL_2(\Q_l),\SL_2(\Z_l)) \stackrel{\sim}{\longrightarrow} \mathcal{H}(T,T_0)^W \\
\mathcal{S}f(a)= \delta^{1/2}(a)\int_N{}f(an)dn , % = |a_1| \int_N{}f(an)dn 
\end{aligned}
\end{equation*}
where $N$ is the subgroup of unipotent matrices in $\SL_2(\Q_l)$ and $\delta$ is the modulus character. %$= |a_1|/|a_2|$ for $ a=\left(^{a_1}\ _{a_2}\right) $.
Furthermore $\widetilde{\mathcal{S}}: \mathcal{H}(\GL_2(\Q_l),\GL_2(\Z_l)) \stackrel{\sim}{\longrightarrow} \mathcal{H}(\widetilde{T},\widetilde{T}_0)^W$ is the Satake isomorphism for $\GL_2(\Q_l)$ and is given by the same formula.

Let $\mathcal{H}_\Q(\GL_2(\Q_l),\GL_2(\Z_l))\subset \mathcal{H}(\GL_2(\Q_l),\GL_2(\Z_l))$ be the $\Q$-subalgebra of $\Q$-valued functions. This defines a $\Q$-structure on $\mathcal{H}(\GL_2(\Q_l),\GL_2(\Z_l))$. Define the $\Q$-algebra $\mathcal{H}_\Q(\SL_2(\Q_l),\SL_2(\Z_l))$ analogously.

\begin{lem} The morphism $\lambda_l$ restricts to a monomorphism of $\Q$-algebras
$$\lambda_l: \mathcal{H}_{\Q}(\SL_2(\Q_l),\SL_2(\Z_l))\rightarrow \mathcal{H}_{\Q}(\GL_2(\Q_l),\GL_2(\Z_l)),$$
which we again denote by $\lambda_l$.
\end{lem}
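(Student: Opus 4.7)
Injectivity over $\Q$ is automatic from the known injectivity over $\C$ and the embedding $\mathcal{H}_\Q\hookrightarrow \mathcal{H}$, so the only content is the inclusion $\lambda_l(\mathcal{H}_\Q(\SL_2(\Q_l),\SL_2(\Z_l)))\subset \mathcal{H}_\Q(\GL_2(\Q_l),\GL_2(\Z_l))$. My plan is to trace $\Q$-rationality through each factor of the defining composition $\lambda_l=(\widetilde{\alpha}\circ\widetilde{\mathcal{S}})^{-1}\circ\psi\circ\alpha\circ\mathcal{S}$.

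First I would identify the image of $\mathcal{H}_\Q$ inside each Satake target. For $f\in \mathcal{H}_\Q$, the integral $\int_N f(tn)\,dn$ is a $\Q$-weighted volume of a compact open subset of $N$ (with the Haar measure giving $N\cap \SL_2(\Z_l)$ volume one), hence lies in $\Q$. Multiplying by $\delta^{1/2}(\operatorname{diag}(l^n,l^{-n}))=l^{-n}$ shows that the image of $\mathcal{H}_\Q(\SL_2(\Q_l),\SL_2(\Z_l))$ equals $R^{\SL_2}_\Q:=\{\sum_n c_n Z^n\in \C[Z^{\pm 1}]^W: c_n\in l^{-n}\Q\}$. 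The same computation for $\GL_2$, with $\widetilde{\delta}^{1/2}(\operatorname{diag}(l^a,l^b))=l^{(b-a)/2}$, gives the image $R^{\GL_2}_\Q:=\{\sum_{a,b} c_{a,b} X^a Y^b\in \C[X^{\pm 1},Y^{\pm 1}]^W : c_{a,b}\in l^{(b-a)/2}\Q\}$.

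Next I would check that $\psi$ sends $R^{\SL_2}_\Q$ into $R^{\GL_2}_\Q$. Since $\psi(c_n Z^n)=c_n X^n Y^{-n}$, membership of the monomial $c_n X^n Y^{-n}$ in $R^{\GL_2}_\Q$ requires $c_n\in l^{((-n)-n)/2}\Q=l^{-n}\Q$, exactly the defining condition of $R^{\SL_2}_\Q$. Combining this with the previous step yields the desired inclusion.

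The main obstacle is purely bookkeeping: the factor $\widetilde{\delta}^{1/2}$ a priori introduces irrational half-powers of $l$ into coefficients of the Satake image, which could destroy rationality. The matching works because $\psi$ produces only monomials $X^n Y^{-n}$, for which $(b-a)/2=-n$ is an integer; this is a shadow of the identity $\widetilde{\delta}^{1/2}|_T=\delta^{1/2}$ (immediate from $\widetilde{\delta}(\operatorname{diag}(a,b))=|a/b|$ and $\delta(\operatorname{diag}(a,a^{-1}))=|a|^2$) and reflects that $T$ sits inside $\widetilde{T}$ as the kernel of the determinant.
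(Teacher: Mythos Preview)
Your argument is correct and takes a different route from the paper. You characterise the $\Q$-structures on both Satake targets and verify that $\psi$ carries one into the other, the key point being that $\psi$ only hits monomials $X^nY^{-n}$, for which the half-power $l^{(b-a)/2}=l^{-n}$ is rational. (Note that since $l\in\Q^*$ your condition $c_n\in l^{-n}\Q$ reduces to $c_n\in\Q$, so in fact $R^{\SL_2}_\Q=\Q[Z^{\pm1}]^W$; this recovers the paper's observation that the $\SL_2$-Satake map is already defined over $\Q$.) The paper instead uses that $\mathcal{H}_\Q(\SL_2(\Q_l),\SL_2(\Z_l))$ is singly generated, and exhibits the image of the generator corresponding to $l(Z+Z^{-1})$ explicitly as $T_l^2 S_l^{-1}-2l\in\mathcal{H}_\Q(\GL_2(\Q_l),\GL_2(\Z_l))$. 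Your approach is more conceptual and would generalise to higher rank; the paper's has the bonus of producing the explicit identity (labelled there for later reference) that underlies the function $\mathcal{P}=\mathcal{T}^2/\mathcal{S}$ on the eigenvariety.

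One small gap: the phrase ``the same computation for $\GL_2$'' only yields the inclusion of the Satake image into $R^{\GL_2}_\Q$, whereas for the lemma you need the reverse inclusion (so that $(\widetilde{\alpha}\circ\widetilde{\mathcal{S}})^{-1}$ carries $R^{\GL_2}_\Q$ back into $\mathcal{H}_\Q$). This follows at once from the standard images $T_l\mapsto\sqrt{l}(X+Y)$ and $S_l^{\pm1}\mapsto(XY)^{\pm1}$, but you should say so.
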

\begin{proof}
As $\delta(T)\subset (\Q^*)^2$ the morphism $\alpha \circ \mathcal{S}$ restricts to an isomorphism 
$$(\alpha \circ \mathcal{S})_\Q: \mathcal{H}_\Q(\SL_2(\Q_l),\SL_2(\Z_l)) \stackrel{\sim}{\longrightarrow} \Q[Z,Z^{-1}]^W $$
such that $(\alpha \circ \mathcal{S})_\Q \otimes_\Q \C = \alpha \circ \mathcal{S}$.\footnote{Note that the situation for $\GL_2$ is different. The restriction of $\widetilde{\mathcal{S}}$ to $\mathcal{H}_\Q(\GL_2(\Q_l),\GL_2(\Z_l))$ does not have image in 
$\Q[X,Y,X^{-1},Y^{-1}]^W$ but rather in $\Q(\sqrt{l})[X,Y,X^{-1},Y^{-1}]^W$.} 
The algebra $\mathcal{H}_\Q(\SL_2(\Q_l),\SL_2(\Z_l))$ can be generated by one element, for example by the preimage of $l(Z+Z^{-1})$ under $\alpha \circ \mathcal{S}$. One easily checks that $\psi(l(Z+Z^{-1}))= l(XY^{-1}+X^{-1}Y)$ is in the image of $(\widetilde{\alpha} \circ \widetilde{\mathcal{S}})_\Q$. 
Namely $\sqrt{l}(X+Y)$ is the image of $T_l:= \mathbf{1}_{\GL_2(\Z_l)(^l_0 \ ^0_1)\GL_2(\Z_l)}$, the element $XY$ is the image of $S_l:=\mathbf{1}_{\GL_2(\Z_l)(^l \ _l)\GL_2(\Z_l)}$ and $X^{-1}Y^{-1}$ is the image of $\mathbf{1}_{\GL_2(\Z_l)(^{l^{-1}}\ _{l^{-1}})\GL_2(\Z_l)}$. Therefore 
\begin{equation}
l(XY^{-1}+ X^{-1}Y)= ((\sqrt{l}(X+Y))^2-2lXY)(XY)^{-1} \in Im((\widetilde{\alpha} \circ \widetilde{\mathcal{S}})_\Q).
\label{urops}
\end{equation}
\end{proof}
\begin{rem}\label{elg} We have decided to present this explicit construction in detail here, as it is helpful when one wants to write down concrete elements in the image of~$\lambda_l$. Alternatively and more elegantly one can construct $\lambda_l$ as follows. For a complex reductive group $H$ let $\mathcal{R}(H)$ denote the $\Q$-valued representation ring. The Satake isomorphism gives natural $\Q$-algebra isomorphisms 
\[\mathcal{H}_{\Q}(\SL_2(\Q_l),\SL_2(\Z_l)) \rightarrow \mathcal{R}(\PGL_2), \text{ and}\] 
\[\mathcal{H}_{\Q}(\GL_2(\Q_l),\GL_2(\Z_l))\otimes_\Q \Q(\sqrt{l}) \rightarrow \mathcal{R}(\GL_2)\otimes_{\Q}\Q(\sqrt{l}). \] 
The map $\lambda_l$ constructed above is then just the natural inclusion $\mathcal{R}(\PGL_2) \rightarrow \mathcal{R}(\GL_2)$. Over $\Q$, $\mathcal{R}(\PGL_2)$ is generated by a 3-dimensional representation, namely the reduced adjoint representation $V$ of $\GL_2$ (i.e., endomorphisms of trace 0) and using the Satake isomorphism one can check that $V$ corresponds to a rational Hecke operator, i.e., an element in $\mathcal{H}_{\Q}(\GL_2(\Q_l),\GL_2(\Z_l))$. 
\end{rem}

Let $p$ be a prime different from $l$ and $E$ be a finite extension of $\Q_p$. 
We define 
$$ \lambda_{l,E}:= \lambda_l \otimes id : \mathcal{H}_\Q(\SL_2(\Q_l),\SL_2(\Z_l))\otimes_\Q E \rightarrow \mathcal{H}_\Q(\GL_2(\Q_l),\GL_2(\Z_l)) \otimes_\Q E.$$
If $E$ is clear from the context, we will denote $\lambda_{l,E}$ by $\lambda_l$.
We abbreviate $$\mathcal{H}_E(\SL_2(\Q_l),\SL_2(\Z_l)):=\mathcal{H}_\Q(\SL_2(\Q_l),\SL_2(\Z_l))\otimes_\Q E \ \ \text{and}$$  
$$\mathcal{H}_E(\GL_2(\Q_l),\GL_2(\Z_l)):=\mathcal{H}_\Q(\GL_2(\Q_l),\GL_2(\Z_l)) \otimes_\Q E.$$

\subsubsection{Iwahori Hecke algebras}
We use the notation $\widetilde{G}, G, S_B$ etc.\ as in Section \ref{subsec: overview} and fix a prime $p \notin S_B$. We denote by $\widetilde{I}$ the standard Iwahori subgroup of $\GL_2(\Q_p)$, i.e.\ the subgroup of matrices that are congruent to an upper triangular matrix $\mod p$, and by $I$ the Iwahori subgroup of $\SL_2(\Q_p)$. Then $\widetilde{I}$ and $I$ are admissible in the sense of Definition 2.2.3 of \cite{david}.
Fix Haar measures on $\GL_2(\Q_p)$ and $\SL_2(\Q_p)$ normalized such that $\mathrm{meas}(I)=\mathrm{meas}(\widetilde{I})=1$.  

Let $\widetilde{T} \subset \GL_2(\Q_p)$ and $T \subset \SL_2(\Q_p)$ be the tori of diagonal matrices. We denote by $\widetilde{\Sigma}^+\subset \widetilde{T}$ (resp. $\widetilde{\Sigma}^{++}$) the monoid of all diagonal matrices of the form 
$\left(\begin{smallmatrix} p^{a_1} & \\ & p^{a_2}\end{smallmatrix}\right)$ with $a_1 \leq a_2$ integers (resp. $a_1 < a_2$) and define $\widetilde{\mathbb{I}}$ to be the monoid generated by~$\widetilde{I} $ and~$\widetilde{\Sigma}^+$. 
Let  $\mathcal{H}^+_p(\widetilde{G})$ be the Hecke algebra associated to $\widetilde{\mathbb{I}}$ as in Proposition 3.2.2 of \cite{david}, i.e.\ $\mathcal{H}^+_p(\widetilde{G})$ is the subalgebra of the full Hecke algebra of $\widetilde{G}$ of compactly supported, locally constant $E$-valued functions on $\widetilde{G}$ with support contained in $\widetilde{\mathbb{I}}$. 
Similarly let 
$$\Sigma^+:=\widetilde{\Sigma}^+\cap \SL_2(\Q_p), \ \Sigma^{++}:=\widetilde{\Sigma}^{++}\cap \SL_2(\Q_p), \ \mathbb{I}:= \widetilde{\mathbb{I}} \cap \SL_2(\Q_p)$$
and $\mathcal{H}^+_p(G)$ be the Hecke algebra associated to $\mathbb{I}$. 
Finally let $e_I:=\mathbf{1}_{I} \in \mathcal{H}^+_p(G)$ and $e_{\widetilde{I}}:= \mathbf{1}_{\widetilde{I}} \in \mathcal{H}^+_p(\widetilde{G})$  be the idempotents attached to $I$ and $\widetilde{I}$.

The so called \textit{Atkin--Lehner algebras} $e_I\mathcal{H}^+_p(G)e_{I} $ and $e_{\widetilde{I}}\mathcal{H}^+_p(\widetilde{G})e_{\widetilde{I}}$ have a very simple structure:  

\begin{lem}[Lemma 3.4.1 of \cite{david}]Let $\mathcal{A}_p(G)$ (resp. $\mathcal{A}_p(\widetilde{G})$) be the monoid algebra $E[\Sigma^+]$ (resp. $E[\widetilde{\Sigma}^+]$). 
For $z\in \Sigma^+$ define $\gamma(z):=|I/(I\cap zIz^{-1})|$. Then the morphism 
$$\mathcal{A}_p(G) \rightarrow e_I\mathcal{H}^+_p(G)e_{I},$$ 
defined as the $E$-linear extension of the map
$$\Sigma^+\rightarrow e_I\mathcal{H}^+_p(G)e_{I}, \ z \mapsto \gamma(z)^{-1} \mathbf{1}_{[\operatorname{IzI}]} $$
is an isomorphism of $E$-algebras. 

Similarly define $\widetilde{\gamma}(z):=|\widetilde{I}/(\widetilde{I}\cap z\widetilde{I}z^{-1})|$ for $z \in \widetilde{\Sigma}^+$. Then the morphism
$$ \mathcal{A}_p(\widetilde{G}) \rightarrow e_{\widetilde{I}}\mathcal{H}^+_p(\widetilde{G})e_{\widetilde{I}} $$
defined as the $E$-linear extension of 
$$\widetilde{\Sigma}^+ \rightarrow e_{\widetilde{I}}\mathcal{H}^+_p(\widetilde{G})e_{\widetilde{I}}, \ z \mapsto \widetilde{\gamma}(z)^{-1} \mathbf{1}_{[\operatorname{\widetilde{I}z\widetilde{I}}]} $$
is an isomorphism of $E$-algebras.
\label{AL}
\end{lem}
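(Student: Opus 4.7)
The two assertions are structurally identical (interchange $G,I,\Sigma^+$ for $\widetilde G,\widetilde I,\widetilde\Sigma^+$), so the plan is to treat the first case in detail; the second will be verbatim. The claimed map is well defined, since for $z\in\Sigma^+$ the function $\mathbf{1}_{IzI}$ is bi-$I$-invariant, compactly supported, and supported in $\mathbb{I}$. It therefore suffices to establish (a) that the map is an $E$-linear bijection onto $e_I\mathcal{H}_p^+(G)e_I$, and (b) that, after the rescaling $z\mapsto\gamma(z)^{-1}\mathbf{1}_{IzI}$, it is multiplicative.

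For (a), the first step is to show that
\[
\mathbb{I}=\bigsqcup_{z\in\Sigma^+}IzI.
\]
Inclusion from right to left is obvious. For the reverse, any element of $\mathbb{I}$ can be written as a word in $I$ and $\Sigma^+$, and using that $\Sigma^+$ is abelian and $z^{-1}Iz\cap I^+\supset I^+$ for $z\in\Sigma^+$ (Iwahori factorisation), one telescopes such a word into the form $iz j$ with $i,j\in I$, $z\in\Sigma^+$. Disjointness follows because $T\cap I=T_0$ and $\Sigma^+$ injects into $T/T_0$, so distinct $z,z'\in\Sigma^+$ cannot lie in the same $I$-double coset. Given this decomposition, any element of $e_I\mathcal{H}_p^+(G)e_I$ is, by compact support and bi-$I$-invariance, a unique finite $E$-linear combination of the indicator functions $\mathbf{1}_{IzI}$, which is exactly the assertion that the map is an $E$-linear isomorphism.

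For (b), the crucial positivity input is that for $z_1,z_2\in\Sigma^+$ one has $z_1Iz_2\subset Iz_1z_2 I$, and therefore $Iz_1I\cdot Iz_2I\subset Iz_1z_2I$. This again reduces, via Iwahori factorisation $I=I^-T_0I^+$, to the facts $z_i I^+z_i^{-1}\subset I^+$ and $z_i^{-1}I^-z_i\subset I^-$ valid for dominant $z_i$. Consequently the convolution $\mathbf{1}_{Iz_1I}*\mathbf{1}_{Iz_2I}$ is bi-$I$-invariant and supported in the single double coset $Iz_1z_2I$, so equals $c\,\mathbf{1}_{Iz_1z_2I}$ for some $c\in E$. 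To identify $c$, decompose $IzI=\bigsqcup_{\alpha}\alpha z I$ with $\alpha$ running through representatives of $I/(I\cap zIz^{-1})$, a set of cardinality $\gamma(z)$; evaluating $(\mathbf{1}_{Iz_1I}*\mathbf{1}_{Iz_2I})(z_1z_2)$ using the Haar measure normalisation $\operatorname{meas}(I)=1$ and counting those pairs $(\alpha,\beta)$ producing the coset of $z_1z_2$ gives $c=\gamma(z_1)\gamma(z_2)/\gamma(z_1z_2)$. Hence
\[
\bigl(\gamma(z_1)^{-1}\mathbf{1}_{Iz_1I}\bigr)*\bigl(\gamma(z_2)^{-1}\mathbf{1}_{Iz_2I}\bigr)=\gamma(z_1z_2)^{-1}\mathbf{1}_{Iz_1z_2I},
\]
so the rescaled map is multiplicative on $\Sigma^+$ and extends to the desired isomorphism of $E$-algebras $\mathcal{A}_p(G)\xrightarrow{\sim}e_I\mathcal{H}_p^+(G)e_I$. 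The main obstacle is the coset book-keeping in the evaluation of $c$: everything rests on the positivity property $z^{-1}I^+z\subset I^+$, which is what makes the $\gamma$-factor multiplicative up to the explicit ratio above. The argument for $\widetilde G$ is formally identical, and one simply invokes \cite[Lemma 3.4.1]{david} for the full details.
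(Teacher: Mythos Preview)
The paper does not prove this lemma at all; it is simply stated with attribution to \cite[Lemma 3.4.1]{david}. Your outline is a correct sketch of the standard argument (Iwahori decomposition of $\mathbb{I}$ into double cosets, positivity $zI^+z^{-1}\subset I^+$ to force $Iz_1I\cdot Iz_2I\subset Iz_1z_2I$, and a measure count to pin down the scalar), and it is essentially the argument Loeffler gives in the cited reference, so there is nothing to compare.
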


As $\Sigma^+ \subset \widetilde{\Sigma}^+$ is a submonoid we get a monomorphism 
$$\lambda_p: \mathcal{A}_p(G)\hookrightarrow \mathcal{A}_p(\widetilde{G})$$
of commutative $E$-algebras. Note that the induced monomorphism
$$e_{I}\mathcal{H}^+(G)e_{I} \hookrightarrow e_{\widetilde{I}}\mathcal{H}^+(\widetilde{G})e_{\widetilde{I}}$$
has the property that for $z \in \Sigma^+$ the characteristic function $\mathbf{1}_{[\operatorname{IzI}]}$ maps to $\mathbf{1}_{[\operatorname{\widetilde{I}z\widetilde{I}}]}$. Indeed one easily checks that $\gamma(z)= \widetilde{\gamma}(z)$ for $z \in \Sigma^+$. 

For a finite set of places $S$ including $p$ and the set $S_B$, we define
$$\widetilde{\mathcal{H}}_{ur,S}:= {\bigotimes}_{l\notin S}'\mathcal{H}_E(\GL_2(\Q_l),\GL_2(\Z_l)) \ \text{ and}$$
$$ \mathcal{H}_{ur,S}:={\bigotimes}_{l\notin S}' \mathcal{H}_E(\SL_2(\Q_l),\SL_2(\Z_l)).$$
We moreover define
$$\widetilde{\mathcal{H}}_S:= \mathcal{A}_p(\widetilde{G}) \otimes_E \widetilde{\mathcal{H}}_{ur,S} \ \text{ and } \ 
\mathcal{H}_S:=\mathcal{A}_p(G)\otimes_E \mathcal{H}_{ur,S} . $$  

Putting together the morphisms defined above we get an inclusion of commutative $E$-algebras
$$\lambda:= \lambda_p \otimes {\bigotimes}' \lambda_l : \mathcal{H}_S \hookrightarrow \widetilde{\mathcal{H}}_S.$$
\begin{rem}
Consider the following elements of $\widetilde{\mathcal{H}}_S$ 
$$ U_p := \mathbf{1}_{\left[\operatorname{\widetilde{I} \left(\begin{smallmatrix} 1& \\ & p \end{smallmatrix}\right) \widetilde{I}}\right]}\otimes 1_{\widetilde{\mathcal{H}}_{ur,S}} \text{, and }  U_{p^2} := \mathbf{1}_{\left[\operatorname{\widetilde{I} \left(\begin{smallmatrix} 1& \\ & p^2 \end{smallmatrix}\right) \widetilde{I}}\right]} \otimes 1_{\widetilde{\mathcal{H}}_{ur,S}}, $$
$$u_0 := \mathbf{1}_{\left[\operatorname{\widetilde{I} \left(\begin{smallmatrix} p^{-1}& \\ & p \end{smallmatrix}\right) \widetilde{I}}\right]} \otimes 1_{\widetilde{\mathcal{H}}_{ur,S}} \text{, and }
 S_p := \mathbf{1}_{\left[\operatorname{\widetilde{I} \left(\begin{smallmatrix} p^{-1}& \\ & p^{-1} \end{smallmatrix}\right) \widetilde{I}}\right]} \otimes 1_{\widetilde{\mathcal{H}}_{ur,S}}. $$
Then Lemma \ref{AL} implies that $S_p \in \widetilde{\mathcal{H}}_S^*$, and with our choice of Haar measures 
$$U_{p^2}=(U_p)^2 \text{ and }  u_0 = S_p \cdot U_{p^2}.$$ 
Note that $u_0 \in Im(\lambda)$.
\label{operators}
\end{rem}
\label{subsec: iwahori}

\subsection{Weight spaces}
\label{sec: weights}
The eigenvarieties that we study below are defined relative to the following spaces:
\begin{prop}[\cite{kevin2} Lemma 2]
\begin{enumerate}[(i)]
	\item There exists a separated rigid analytic space $\mathcal{W} = \Hom(\Z_p^*,\mathbb{G}_m)$ over $\Q_p$ such that for any affinoid $\Q_p$-algebra $R$
$$\mathcal{W}(R) = \Hom_{cont}(\Z_p^*,R^*). $$ We call $\mathcal{W}$ the weight space for $G$. It is the union of $p-1$ open unit disks $B$, if $p\neq 2$ and the union of two open unit disks $B$ if $p=2$. 
\item There exists a separated rigid analytic space $\widetilde{\mathcal{W}} = \Hom((\Z_p^*)^2,\mathbb{G}_m)$ over $\Q_p$ such that for any affinoid $\Q_p$-algebra $R$
$$\widetilde{\mathcal{W}}(R) = \Hom_{cont}((\Z_p^*)^2,R^*). $$ We call $\widetilde{\mathcal{W}}$ the weight space for $\widetilde{G}$. It is the union of $(p-1)^2$ (resp.\ $4$) products $B\times B$ of two open unit disks if $p\neq 2$ (resp.\ if $p=2$).
\end{enumerate}
\end{prop}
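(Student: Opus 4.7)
The plan is to reduce both statements to the representability of $\Hom_{cont}(\Z_p,\mathbb{G}_m)$ by the open unit disk, using the standard topological decomposition of $\Z_p^*$. First I would recall that as topological groups
\begin{equation*}
\Z_p^* \cong \mu \times U^{(1)},
\end{equation*}
where $\mu$ is the torsion subgroup (cyclic of order $p-1$ if $p$ is odd, of order $2$ if $p=2$, generated by $-1$), and $U^{(1)} = 1+p\Z_p$ (resp.\ $1+4\Z_2$) is procyclic and topologically isomorphic to $\Z_p$ via the $p$-adic logarithm. For any affinoid $\Q_p$-algebra $R$, this induces a functorial bijection $\Hom_{cont}(\Z_p^*, R^*) = \widehat{\mu} \times \Hom_{cont}(\Z_p, R^*)$, where $\widehat{\mu}$ is a finite set of size $|\mu|$. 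Thus $\mathcal{W}$ will be a disjoint union of $|\mu|$ copies of the space representing $\Hom_{cont}(\Z_p,\mathbb{G}_m)$.

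Next I would show that $\Hom_{cont}(\Z_p, \mathbb{G}_m)$ is represented by the open unit disk $B$, realised as the admissible union of closed disks of radius $r < 1$ centred at $1$. A continuous homomorphism $\varphi\colon \Z_p \to R^*$ is determined by $u = \varphi(1)$, and continuity (with respect to the Banach topology on $R$) forces $u^{p^n} \to 1$, equivalently $u-1$ is topologically nilpotent in $R$. Conversely, for any such $u$ the binomial series
\begin{equation*}
\varphi(a) = \sum_{k \geq 0} \binom{a}{k}(u-1)^k
\end{equation*}
converges in $R$ since $\binom{a}{k} \in \Z_p$ and $u-1$ is topologically nilpotent, giving a continuous homomorphism extending $1 \mapsto u$. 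The set of such $u$ is exactly $B(R)$, so $\Hom_{cont}(\Z_p,\mathbb{G}_m)$ is represented by $B$. Combining with the first step produces $\mathcal{W}$ as the disjoint union of $|\mu|$ copies of $B$, establishing (i).

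For (ii), the identity $\Hom_{cont}((\Z_p^*)^2, \mathbb{G}_m) = \Hom_{cont}(\Z_p^*, \mathbb{G}_m) \times \Hom_{cont}(\Z_p^*, \mathbb{G}_m)$ identifies $\widetilde{\mathcal{W}}$ with the fibre product $\mathcal{W} \times_{\Q_p} \mathcal{W}$, which by (i) decomposes into $|\mu|^2$ products $B \times B$. The main point requiring care is the equivalence between continuity of $\varphi$ with respect to the natural Banach topology on $R^*$ and topological nilpotence of $u-1$; this is standard once one observes that although the norm on an affinoid algebra is only defined up to equivalence, the set of topologically nilpotent elements is independent of the chosen representative, so the rigid space $B$ thus produced is intrinsic.
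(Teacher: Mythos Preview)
The paper does not give its own proof of this proposition; it simply records the statement with a citation to Lemma~2 of \cite{kevin2}. Your argument is correct and is exactly the standard one underlying that reference: decompose $\Z_p^*$ as a finite torsion part times a procyclic part isomorphic to $\Z_p$, then identify $\Hom_{cont}(\Z_p,\mathbb{G}_m)$ with the open unit disk via $\varphi\mapsto\varphi(1)$, using topological nilpotence of $\varphi(1)-1$ and the binomial series for the inverse construction. So there is nothing to compare; your proof fills in what the paper delegates to the literature.
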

Note that the construction of weight spaces given in Section 6.4 of \cite{emerton2} agrees with the above as any continuous character of $\Z_p^*$ is locally analytic. 

For any $\Q_p$-affinoid algebra $R$, the inclusion $\Z_p^*\hookrightarrow (\Z_p^*)^2, x \mapsto (x,x^{-1})$ induces the group homomorphism 
\[\Hom_{cont}((\Z_p^*)^2, R^*) \rightarrow \Hom_{cont}(\Z_p^*, R^*), (\chi_1,\chi_2) \mapsto \chi_1/\chi_2
\]
via restriction. We denote by $\mu$ the resulting morphism  
\[\mu:\widetilde{\mathcal{W}} \rightarrow \mathcal{W}.
\]
Up to a change of coordinates $(x,y)\mapsto (x/y,y)$ this is the projection map to a factor, hence $\mu$ is flat. 

Recall that a rigid space $X$ over $E$ is called \textit{nested} if it has an admissible cover by open affinoids $\{X_i,i\geq0\}$ such that $X_i \subset X_{i+1}$ and the natural $E$-linear map $\mathcal{O}(X_{i+1})\rightarrow \mathcal{O}(X_i)$ is compact. 
Note that both $\mathcal{W}$ and $\widetilde{\mathcal{W}}$ are nested.   
\begin{lem}
Let $V\subset \widetilde{\mathcal{W}}$ be an affinoid open. Then there exists an affinoid $U \subset \mathcal{W}$ such that $\mu(V)\subset U$. 
\label{nested}
\end{lem}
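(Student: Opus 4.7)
The plan is purely formal: the content is just quasi-compactness of an affinoid paired with the nested structure of $\mathcal{W}$, and I will not need to analyse $\mu$ coordinatewise.

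First, I fix a nested admissible cover $\mathcal{W} = \bigcup_{n \geq 0} U_n$ by affinoid opens with $U_n \subset U_{n+1}$. Such a cover exists because $\mathcal{W}$ is nested in the sense recalled above the lemma; concretely, $\mathcal{W}$ is a finite disjoint union of open unit discs, and inside each disc one can take the closed discs of radius $r_n$ with $r_n \to 1^-$, then form the (finite) disjoint union across components to obtain a single affinoid $U_n$.

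Next, since $\mu \colon \widetilde{\mathcal{W}} \to \mathcal{W}$ is a morphism of rigid spaces, the preimage $\{\mu^{-1}(U_n)\}_{n \geq 0}$ is an admissible cover of $\widetilde{\mathcal{W}}$, and therefore $\{V \cap \mu^{-1}(U_n)\}_{n \geq 0}$ is an admissible cover of $V$. Because $V$ is affinoid, it is quasi-compact, so this cover admits a finite subcover. The nesting $U_n \subset U_{n+1}$ then forces $V \subset \mu^{-1}(U_N)$ for $N$ sufficiently large, and consequently $\mu(V) \subset U_N$. Setting $U := U_N$ proves the lemma.

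The argument has no real obstacle; the only point one has to be slightly careful about is that $U_n$ be a single affinoid rather than merely an affinoid subdomain of each component, but this is automatic since a finite disjoint union of affinoids is affinoid, and the weight space has only finitely many connected components.
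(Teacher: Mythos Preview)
Your argument is correct. Both your proof and the paper's ultimately rest on the quasi-compactness of the affinoid $V$ against an increasing admissible cover, but the routes differ slightly. The paper works explicitly: it writes $\widetilde{\mathcal{W}}$ as a disjoint union of products $B_{j,k}$ of open unit discs, changes coordinates so that $\mu$ becomes projection onto the first factor, and then builds \emph{compatible} nested covers $\{X_i\}$ of $\widetilde{\mathcal{W}}$ and $\{U_i\}$ of $\mathcal{W}$ with $\mu(X_i)\subset U_i$; the conclusion follows once one knows $V\subset X_i$ for some $i$. You instead pull back a nested cover of $\mathcal{W}$ along $\mu$ and invoke quasi-compactness directly, never touching coordinates or the shape of $\mu$. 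Your approach is more general (it would work for any morphism into a nested rigid space) and slightly cleaner for the bare statement of the lemma. The paper's explicit construction, however, is not wasted effort: the maps $\mu_i:X_i\to U_i$ and the cover $\{X_i\}$ reappear in the Remark immediately following the lemma and in the construction of the second auxiliary eigenvariety $\mathcal{D}''$, where one needs a concrete choice of affinoid $X_V\subset\mathcal{W}$ containing $\mu(V)$.
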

\begin{proof} Let $q=(p-1)$ if $p \neq 2$ and $q=2$ if $p=2$. Then $\widetilde{\mathcal{W}}$ is a disjoint union of $q^2$ products $B\times B$ of two open unit disks and we label them by $B_{j,k}$, $j=1,\ldots,q, k=1,\ldots,q$. By changing coordinates we can assume that on any of these products $\mu$ is the projection onto the first disk. Choose a nested cover $\cup_{i \in \mathbb{N}} X_i$ of $\widetilde{\mathcal{W}}$, where $X_i := \cup_{j,k=1}^q X_{i,j}\times X_{i,k}$  and $X_{i,j}\times X_{i,k} \subset B_{j,k}$ is the product of two closed disks of radius $p^{-1/i}$. Likewise we choose the nested cover $\cup_{i \in \mathbb{N}} U_i$ of $\mathcal{W}$, where $U_i:= \cup_{j=1}^q X_{i,j}$. Then for any $i \in \mathbb{N}$ and $ j,k \in \{1,\ldots,q\},$ the morphism $\mu|_{X_i}$ factors through $U_i$. So we can view $\mu$ as being glued from the projections $\mu_{i}:X_i \rightarrow U_i$. 
Now let $V\subset \widetilde{\mathcal{W}}$ be an open affinoid. Then there exists $i$ such that $V \subset X_i$ and therefore $\mu(V)= \mu_i(V) \subset U_i$. 
\end{proof}

\begin{rem}
Note that for any $i \in \mathbb{N}$, the morphism $\mu_i$ defined in the proof of the last lemma is a flat morphism of quasi-compact and separated spaces. This implies 
that in fact the image $\mu(V)$ is an admissible open subset of $\mathcal{W}$. 
It is furthermore quasi-compact, as $V$ is quasi-compact (because it is affinoid). 
\end{rem}

In the context of eigenvarieties we will consider the base change $\widetilde{\mathcal{W}}\times_{\operatorname{Sp}(\Q_p)} \operatorname{Sp}(E)$ and $\mathcal{W}\times_{\operatorname{Sp}(\Q_p)} \operatorname{Sp}(E)$ for a finite extension $E/\Q_p$. Note that Lemma \ref{nested} is also true for the base change of $\mu$. In order to keep the notation simple, we will omit the base change from the notation: $\widetilde{\mathcal{W}}$ and $\mathcal{W}$ will denote the base change to a finite extension $E$ of $\Q_p$, which will be clear from the context.

Furthermore we denote by $\Delta: \widetilde{T}_0\rightarrow \mathcal{O}(\widetilde{\mathcal{W}})^* $ the universal character and for any $X \subset \widetilde{\mathcal{W}}$ affinoid open we let 
$$\Delta_X : \widetilde{T}_0 \rightarrow \mathcal{O}(X)^*$$
be the composition of $\Delta$ and the natural homomorphism $\mathcal{O}(\widetilde{\mathcal{W}})\rightarrow \mathcal{O}(X)$.

\section{A zoo of eigenvarieties}
\label{sec: zoo}
Let $B$, $\widetilde{G}$, $S_B$ and $p$ be as above and let $E/\Q_p$ be a finite extension. In this section we briefly recall some details of the construction in \cite{david}, check some technical properties of the eigenvarieties and construct the two auxiliary eigenvarieties that we need for the $p$-adic transfer. 
We remark here that due to the generality Loeffler works in he has to impose certain \textit{arithmetical} conditions at various places. In our case however one easily checks that $\widetilde{G}$ and $G$ satisfy the conditions of \cite{gross} Proposition 1.4.\footnote{E.g.\ condition (5) is satisfied as the centres of $\widetilde{G}$ and of $G$ are maximal split tori.} All arithmetic subgroups of $\widetilde{G}$ and $G$ are therefore finite, which implies that we do not have to worry about any of the arithmetical conditions.  

\subsection{Eigenvarieties of idempotent type for $\widetilde{G}$}
\label{subsec: firsteigenvarieties}
\label{sec: 3.2.1}
Choose a product Haar measure $\widetilde{\mu}=\prod_l \widetilde{\mu}_l$ on $\widetilde{G}(\A_f)$ such that for all $l \notin S_B \cup \{p\}$, $\widetilde{\mu}_l(\GL_2(\Z_l)) = 1$ and such that $\mu_p(\widetilde{I})=1$.

\subsubsection{Review of $p$-adic automorphic forms and eigenvarieties of idempotent type}

Let $X \subset \widetilde{\mathcal{W}}/E$ be an open affinoid and $V$ be a locally $\Q_p$-analytic representation of $\widetilde{T}_0 \cong (\Z^*_p)^2$. Let
$k(X)$ be the minimal integer $k'\geq 0$ such that the restriction of $\Delta_{X}$ to $\widetilde{T}_{k'}:=\{ A \in \widetilde{T}(\Z_p) \ | \  A  \equiv 1 \text{ mod  }p^{k'} \}$ is analytic. Let $k\geq k(X)$ be an integer big enough such that the restriction of $V$ to $\widetilde{T}_k$ is analytic.

In Section 2 of \cite{david} Loeffler constructs orthonormalizable Banach $\mathcal{O}(X)$-modules $\mathcal{C}(X,V,k)$. We assume throughout that $V:= \textbf{1}$ is the trivial representation and write $\mathcal{C}(X,k)$ for $\mathcal{C}(X,1,k)$. The space $\mathcal{C}(X,k)$ is equipped with an action of the monoid $\widetilde{\mathbb{I}}$. 
Denote by $\mathcal{L}(\mathcal{C}(X,k))$ the $\mathcal{O}(X)$-module of $\mathcal{C}(X,k)$-valued automorphic forms (see Def. 3.3.2 of \cite{david}).
Fix an idempotent
$$\widetilde{e}=\prod_{l\neq p}\widetilde{e}_l \in C^\infty_c(\widetilde{G}(\A_f^{p}), E).$$ 
Let $S$ be a finite set of primes containing $p$ and the set $S_B$ and such that $\widetilde{e}_l =\mathbf{1}_{\GL_2(\Z_l)}$ is the identity in $\mathcal{H}_E(\GL_2(\Q_l),\GL_2(\Z_l))$for all $l\notin S$. Define $\widetilde{e}_p:= \widetilde{e}_{\widetilde{I}}$ and
 $$M(\widetilde{e},X,k):= (\widetilde{e}\cdot \widetilde{e}_p) \mathcal{L}(\mathcal{C}(X,k)).$$ 

It is a Banach $\mathcal{O}(X)$-module satisfying property (Pr) and carries an action of the algebra $\widetilde{\mathcal{H}}_{S}$. If $u \in \widetilde{\mathcal{H}}_S$ is supported in $\widetilde{G}(\A^{S})\times \widetilde{I}\widetilde{\Sigma}^{++}\widetilde{I}$ then by Theorem 3.7.2 of \cite{david}, $u$ acts as a compact operator on $M(\widetilde{e},X,k)$. Fix such a $u$.

Finally for $Y\subset X $ an open affinoid we have links 
$$\alpha_{XY}:M(\widetilde{e},Y) \rightarrow M(\widetilde{e},X) \widehat{\otimes}_{\mathcal{O}(X)}\mathcal{O}(Y)$$
for the modules $M(\widetilde{e},X):=M(\widetilde{e},X,k(X))$ as constructed in Lemma 3.12.2 of \cite{david}.

\begin{prop}[cf.\ Theorem 3.11.3 of \cite{david}] The data $(\widetilde{\mathcal{W}}, M_\bullet(\widetilde{e}), \alpha_{\bullet}, \widetilde{\mathcal{H}}_S,u)$ is an eigenvariety datum. We denote by $\widetilde{\mathcal{D}}$ the associated eigenvariety and call it an eigenvariety of idempotent type for $\widetilde{G}$.
\end{prop}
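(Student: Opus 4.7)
The plan is to verify directly the five ingredients that make up an eigenvariety datum as set up before Theorem \ref{gEV}: a reduced separated base, a coherent family of Banach modules with property (Pr), a commutative $E$-algebra acting on them, a distinguished compact operator, and a system of links satisfying the cocycle condition. Everything has essentially been assembled in the preceding paragraphs, so the task is to collect the inputs from \cite{david} in the right order and check they apply to our particular choices.

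First I would note that $\widetilde{\mathcal{W}}$ (base changed to $E$) is reduced and separated, being a disjoint union of products of open unit disks. For each open affinoid $X \subset \widetilde{\mathcal{W}}$, I take $M(\widetilde{e},X) := M(\widetilde{e},X,k(X))$, and I need to check that this is a Banach $\mathcal{O}(X)$-module with property (Pr). The space $\mathcal{C}(X,k(X))$ is orthonormalizable by construction in Section 2 of \cite{david}, and the $\mathcal{O}(X)$-module of $\mathcal{C}(X,k(X))$-valued automorphic forms inherits a (Pr)-structure by Loeffler's construction; cutting down by the idempotent $\widetilde{e}\cdot\widetilde{e}_p \in C^\infty_c(\widetilde{G}(\A_f),E)$ preserves property (Pr), since applying a continuous idempotent to a (Pr)-module produces a direct summand of a (Pr)-module, which is again (Pr).

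Next I would verify commutativity of $\widetilde{\mathcal{H}}_S = \mathcal{A}_p(\widetilde{G}) \otimes_E \widetilde{\mathcal{H}}_{ur,S}$: the Atkin--Lehner factor is the monoid algebra $E[\widetilde{\Sigma}^+]$ of a commutative monoid, while each local factor $\mathcal{H}_E(\GL_2(\Q_l),\GL_2(\Z_l))$ is commutative by the Satake isomorphism, so the restricted tensor product is commutative. The action of $\widetilde{\mathcal{H}}_S$ on $M(\widetilde{e},X)$ comes from the action of $\widetilde{\mathbb{I}}$ on $\mathcal{C}(X,k(X))$ at $p$, combined with the usual convolution action of $C^\infty_c(\widetilde{G}(\A_f^p),E)$ on the space of automorphic forms away from $p$; the idempotent $\widetilde{e}\cdot\widetilde{e}_p$ commutes with both of these, so the action descends to $M(\widetilde{e},X)$. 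Compactness of $u$ on $M(\widetilde{e},X)$ is the content of Theorem 3.7.2 of \cite{david}, which applies because $u$ is, by assumption, supported in $\widetilde{G}(\A^S)\times \widetilde{I}\widetilde{\Sigma}^{++}\widetilde{I}$.

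The main obstacle, and the only nontrivial point, is the system of links. For any nested affinoids $Y \subset X \subset \widetilde{\mathcal{W}}$, the map $\alpha_{XY} : M(\widetilde{e},Y)\rightarrow M(\widetilde{e},X)\widehat{\otimes}_{\mathcal{O}(X)}\mathcal{O}(Y)$ is constructed in Lemma 3.12.2 of \cite{david}; in particular this includes the case where $k(X) \neq k(Y)$, which is where the argument is most delicate. What remains is to verify the cocycle condition $\alpha_{XZ} = \alpha_{XY}\circ \alpha_{YZ}$ for a triple $Z \subset Y \subset X$, together with the fact that each $\alpha_{XY}$ is a link in the sense of \cite{kevin}; both are built into Loeffler's construction, since his maps are assembled from the canonical restriction of automorphic forms composed with the canonical comparison between $\mathcal{C}(X,k)$-valued and $\mathcal{C}(Y,k')$-valued coefficient systems, and all of these satisfy the evident transitivity. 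Consequently all axioms hold, and the tuple is an eigenvariety datum; the resulting eigenvariety $\widetilde{\mathcal{D}}$ is the one referred to in Theorem 3.11.3 of \cite{david}.
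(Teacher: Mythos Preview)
Your proposal is correct and in fact more detailed than what the paper does: the paper gives no proof at all, simply recording the proposition with the citation ``cf.\ Theorem 3.11.3 of \cite{david}'' and moving on. Your expanded verification of the five ingredients, drawing on the same results from \cite{david} (the (Pr) property, Theorem 3.7.2 for compactness of $u$, Lemma 3.12.2 for the links), is exactly what underlies that citation.
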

\begin{rem}
We sometimes need to distinguish between different eigenvarieties of idempotent type for $\widetilde{G}$. We then decorate them accordingly as $\widetilde{\mathcal{D}}_{u}$, $\mathcal{D}(\widetilde{e})$ or $\D_u(\widetilde{e})$.
\end{rem}

Recall the definition of the operator $ u_0 $ from Remark \ref{operators}. 
\begin{lem}\label{changeu} Let $u = \mathbf{1}_{\operatorname{[\widetilde{I}z\widetilde{I}]}} \otimes 1_{\widetilde{\mathcal{H}}_{ur,S}}$ and $u' = \mathbf{1}_{\operatorname{[\widetilde{I}z'\widetilde{I}]}} \otimes 1_{\widetilde{\mathcal{H}}_{ur,S}}$ be two elements of~$\widetilde{\mathcal{H}}_S$ such that $z,z' \in \widetilde{\Sigma}^{++}$. Then for any idempotent $\widetilde{e}$ the eigenvarieties 
$\D_u(\widetilde{e})$ and~$\D_{u'}(\widetilde{e})$ are isomorphic. In particular for any such $u$, $\D_u(\widetilde{e})$ is isomorphic to~$\mathcal{D}_{u_0}(\widetilde{e})$.
\end{lem}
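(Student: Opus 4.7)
The plan is to reduce the comparison to the canonical case $u_0$ (by transitivity of isomorphism) and then exploit the fact that both $u$ and $u_0$ lie in the commutative Atkin--Lehner algebra at $p$, which by Lemma \ref{AL} is isomorphic to the monoid algebra $\mathcal{A}_p(\widetilde{G}) = E[\widetilde{\Sigma}^+]$.

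The central computation is an algebraic identity in $\widetilde{\mathcal{H}}_S$. Writing $z = \mathrm{diag}(p^{a_1}, p^{a_2})$ with $a_1 < a_2$, and noting that $\mathrm{diag}(p,p) = S_p^{-1}$ in $\mathcal{A}_p(\widetilde{G})$, the factorisation
$$z^2 = \mathrm{diag}(p^{a_1+a_2}, p^{a_1+a_2}) \cdot \mathrm{diag}(p^{a_1-a_2}, p^{a_2-a_1})$$
inside $\widetilde{\Sigma}^+$ will translate, after accounting for the $\widetilde{\gamma}$-renormalisation in Lemma \ref{AL}, into an identity of the form
$$u^2 = c \cdot S_p^{-(a_1+a_2)} \cdot u_0^{a_2-a_1}$$
in $\widetilde{\mathcal{H}}_S$, for some non-zero constant $c \in E^\times$ built out of $\widetilde{\gamma}(z)$ and a power of $p$.

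The payoff is immediate: since $S_p \in \widetilde{\mathcal{H}}_S^*$ by Remark \ref{operators}, every system of eigenvalues $\lambda: \widetilde{\mathcal{H}}_S \to \Qbar_p$ satisfies $\lambda(S_p) \neq 0$, and so $\lambda(u) \neq 0$ if and only if $\lambda(u_0) \neq 0$. Hence the $u$-finite and $u_0$-finite systems of eigenvalues for $M(\widetilde{e}, \bullet)$ coincide, and the $\Qbar_p$-points of $\D_u(\widetilde{e})$ and $\D_{u_0}(\widetilde{e})$ are in natural bijection via their associated characters of $\widetilde{\mathcal{H}}_S$ and their projections to $\widetilde{\mathcal{W}}$.

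To promote this equality of eigen-systems to an isomorphism of rigid spaces, I would argue as follows. The function $\psi_{u_0}(u) \in \mathcal{O}(\D_{u_0}(\widetilde{e}))$ does not vanish at any $\Qbar_p$-point by the previous paragraph and, the eigenvariety being reduced, is therefore a unit; the pair $(\omega_{u_0}, \psi_{u_0}(u)^{-1})$ is then a candidate for the morphism $\nu$ attached to the operator $u$. I would then check conditions (i)--(iii) of Proposition \ref{EV} on each piece of an affinoid cover of $\widetilde{\mathcal{W}}$ and invoke the uniqueness part of that proposition to obtain $\D_{u_0}(\widetilde{e}) \cong \D_u(\widetilde{e})$. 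The hard part is the finiteness in (i); the explicit identity above is exactly what is needed to express the new projection coordinate $\psi(u)^{-1}$ as an algebraic function of the known finite coordinate $\psi(u_0)^{-1}$ and the unit $\psi(S_p)$, and this is the step where the argument really uses $z \in \widetilde{\Sigma}^{++}$.
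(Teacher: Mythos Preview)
Your approach is essentially the same as the paper's, just more explicit. The paper's proof is two citations: Lemma~3.4.2 of \cite{david} gives that the image of $u$ in $\operatorname{End}(M(\widetilde{e},X))$ is invertible if and only if the image of $u'$ is, and Corollary~3.11.4 of \cite{david} then concludes that the resulting eigenvarieties are canonically isomorphic. Your algebraic identity $u^2 = c\, S_p^{-(a_1+a_2)} u_0^{\,a_2-a_1}$ in $\mathcal{A}_p(\widetilde{G})$ is exactly what underlies Loeffler's Lemma~3.4.2 in this rank-one case (and your computation is correct; in fact $c=1$ since $\widetilde{\gamma}(z)=p^{a_2-a_1}$), so the two proofs share the same core.

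One comment on your promotion step. Verifying condition~(i) of Proposition~\ref{EV} for the new map $(\omega,\psi(u)^{-1})$ is slightly more delicate than your last paragraph suggests: the identity expresses $\psi(u_0)^{-1}$ as a root of a monic polynomial whose constant term involves $\psi(S_p)$, and $\psi(S_p)$ lies in $\mathcal{O}(\D_{u_0})$ rather than in the pullback of $\mathcal{O}(X\times\A^1)$ along the \emph{new} $\nu$. So the finiteness does not drop out from a single integrality relation; one really has to argue that the finite-slope submodules of $M(\widetilde{e},X)$ with respect to $u$ and $u_0$ coincide (which your identity does give, on the level of operators) and that this forces the local Hecke-algebra images, hence the local pieces of the eigenvariety, to agree. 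This is what Loeffler's Corollary~3.11.4 packages, and it is why the paper simply cites it rather than redoing the uniqueness verification by hand.
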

\begin{proof} 
By Lemma 3.4.2 of \cite{david} we get that locally over an affinoid $X$ of weight space, the image of $u$ in $\operatorname{End}(M(\widetilde{e},X))$ is invertible if and only if the image of $u'$ is invertible. The lemma now follows from Corollary 3.11.4 of \cite{david}.
\end{proof}

From now on we only consider eigenvarieties $\widetilde{\D}$ that are built with respect to an operator $u$ of the form $\mathbf{1}_{\operatorname{[\widetilde{I}z\widetilde{I}]}} \otimes 1_{\widetilde{\mathcal{H}}_{ur,S}}$ for $z \in \widetilde{\Sigma}^{++}$. 

\subsubsection{Classical points}
We show that, in the sense of Definition \ref{evforz}, the eigenvariety~$\D(e)$ is an eigenvariety for a set $\mathcal{Z}_e$ arising from classical automorphic representations. We choose once and for all a square root of $p$ in $\Qbar$. Define $\widetilde{\Sigma}:= \widetilde{T}/\widetilde{T}_0$ and let $\delta_{\widetilde{B}}:\widetilde{\Sigma} \rightarrow \C^*$ be the modulus character of the Borel subgroup $\widetilde{B} \subset \GL_2(\Q_p)$ of upper triangular matrices, so $\delta_{\widetilde{B}}(p^a, p^b)=|p^a|/|p^b|= p^{b-a}.$ Let $\pi_p$ be an irreducible smooth representation of $\GL_2(\Q_p)$ and assume it has a non-zero fixed vector under~$\widetilde{I}$. 
\begin{defn}[cf.\ \cite{BC} Def.\ 6.4.6]
A character $\chi:\widetilde{\Sigma} \rightarrow \C^*$ is called an accessible refinement of $\pi_p$ if $\chi \delta_{\widetilde{B}}^{-1/2}:\widetilde{\Sigma} \rightarrow \C^*$ occurs in $\pi_p^{\widetilde{I}}$. 
\end{defn}
Using Jacquet-modules and the Geometric Lemma of Bernstein and Zelevinsky one can see that $\chi$ is an accessible refinement if and only if $\pi_p$ is a subrepresentation of $Ind_{\widetilde{B}}^{\GL_2(\Q_p)}(\chi)$ (cf.\ Section 6.4 of \cite{BC}). 

The next definition is analogous to the one made in Section 7.2.2 of \cite{BC} in the context of unitary groups. Let $\underline{k}:=(k_1,k_2) \in \Z^2$ be a pair of integers such that $k_1\geq k_2$.
\begin{defn}A $p$-refined automorphic representation of weight $\underline{k}$ of $\widetilde{G}(\A)$ is a pair $(\pi, \chi)$ such that
\begin{itemize}
	\item $\pi$ is an automorphic representation of $\widetilde{G}(\A)$, 
	\item $\pi_p$ has a non-zero $\widetilde{I}$-fixed vector and $\chi: \widetilde{\Sigma} \rightarrow \C^*$ is an accessible refinement of $\pi_p$, 
	\item $\pi_\infty \cong  (\operatorname{Sym}^{k_1-k_2} (\C^2) \otimes \operatorname{Nrd}^{k_2})^*$. 
\end{itemize}
\label{pref}
\end{defn}
Note that our terminology is non-standard in the sense that $\pi_\infty$ is the dual of the representation one would expect. The reason for our choice of terminology is that with Loeffler's normalizations a $p$-refined automorphic representation gives rise to a point on an eigenvariety, whose image in weight space $\mathcal{W}$ corresponds to $\underline{k}$; and it is this weight we want to keep track of (cf.\ Lemma \ref{clapoint} below).

Let $\widetilde{e}:= \prod{\widetilde{e}_l	} \in C^\infty_c(\widetilde{G}(\A_f^{p}), \Qbar)$ be an idempotent and as before fix a finite set of primes $S$ containing $p$, $S_B$ and all primes $l$ where $\widetilde{e}_l$ is not equal to $\mathbf{1}_{\GL_2(\Z_l)}$.
Assume $(\pi,\chi)$ is a $p$-refined automorphic representation such that $\iota_\infty(\widetilde{e})\pi^{p}_f \neq 0$, so in particular $\pi_l$ is unramified for all $l \notin S$.

We attach to $(\pi,\chi)$ a $\Qbar_p$-valued character $\psi_{(\pi,\chi)}$ of $\widetilde{\mathcal{H}}_S$ as follows: 
The space $(\pi_f^S)^{\widetilde{G}(\widehat{\Z}^S)}$ is 1-dimensional and $\widetilde{\mathcal{H}}_{\text{ur}}$ acts on it via a character, which we denote by~$\chi_{\text{ur}}$. As $\pi$ is defined over $\Qbar$, $\chi_{\text{ur}}$ as well as $\chi$ are valued in $\iota_\infty(\Qbar)$ (in fact in a number field).
We turn $\chi\delta_{\widetilde{B}}^{-1/2}$ into a character of $\mathcal{A}_p(\widetilde{G})$ by restricting $\iota_p \circ \iota_\infty^{-1}\circ(\chi\delta_{\widetilde{B}}^{-1/2})$ to $\widetilde{\Sigma}^+$ and extending it $E$-linearly to $\mathcal{A}_p(\widetilde{G})$. Abusing notation we denote the resulting character again by $\chi\delta_{\widetilde{B}}^{-1/2}$.
Let $\delta_{\underline{k}}:\mathcal{A}_p(\widetilde{G}) \rightarrow \Qbar_p$ be the character given by $\delta_{\underline{k}} (p^{a_1},p^{a_2}) = p^{-k_1 a_1-k_2a_2}$. Then we define
$$\psi_{(\pi,\chi)}: \widetilde{\mathcal{H}}_S \rightarrow \Qbar_p , \ \ \psi_{(\pi,\chi)}:= \chi \delta_{\widetilde{B}}^{-1/2}\delta_{\underline{k}} \otimes \chi_{\text{ur}},$$
and refer to it as the \textit{character of $\widetilde{\mathcal{H}}_S$ associated with $(\pi,\chi)$}.

We embed $\Z^2 \rightarrow \widetilde{\mathcal{W}}(\Qbar_p)$ via $(k_1,k_2) \mapsto ( (z_1,z_2) \mapsto z_1^{k_1}z_2^{k_2})$ and define $\mathcal{Z}_{\widetilde{e}} \subset \Hom_{ring}(\widetilde{\mathcal{H}}_S,\Qbar_p) \times \mathcal{W}(\Qbar_p)$ to be the subset of pairs
\begin{equation}
\begin{aligned} \{(\psi_{(\pi,\chi)}, \underline{k}) \ | &\ \pi \text{ is a $p$-refined automorphic representation of weight } \underline{k} \text{ such that}\\
 &  \pi_p \text{ is an irreducible unramified principal series and } \iota_\infty(\widetilde{e})\pi_f^p \neq 0 \}.
\end{aligned}
\label{cp}
\end{equation}

Let $E$ be a finite extension of $\Q_p$ such that $\iota_p(\widetilde{e})$ has values in $E$ and let $\widetilde{\mathcal{D}}:= \mathcal{D}(\iota_p(\widetilde{e}))$ be the eigenvariety associated to $\iota_p(\widetilde{e})$. Any $(\psi_{(\pi,\chi)}, \underline{k})\in \mathcal{Z}_{\widetilde{e}}$ gives rise to a point in $\widetilde{\D}$. More precisely let $X \subset \widetilde{\mathcal{W}}$ be an open affinoid containing $\underline{k}$ and $E'/ E$ be a finite extension containing the values of $\psi_{(\pi,\chi)}$. We view $\underline{k}$ as an $E'$-valued point of $\widetilde{\mathcal{W}}$, in particular this gives a morphism $\underline{k}: \mathcal{O}(X) \rightarrow E'$.
\begin{lem}
Let $(\psi_{(\pi,\chi)}, \underline{k})\in \mathcal{Z}_{\widetilde{e}}$ be an element. Then there exists $0 \neq v \in M(\widetilde{e},X)\widehat{\otimes}_{\mathcal{O}(X),\underline{k}} E'$ such that $h\cdot v = \psi_{(\pi,\chi)}(h)v$ for all $h\in \widetilde{\mathcal{H}}_S$. 
\label{clapoint}
\end{lem}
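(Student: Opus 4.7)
The plan is to realize $v$ as the image of a classical automorphic eigenform under the natural inclusion of classical forms into overconvergent $p$-adic automorphic forms at weight $\underline{k}$. Concretely, I would first specialize the universal Banach module $M(\widetilde{e},X)$ at the $E'$-point $\underline{k}$ of $X$. By Loeffler's construction (see Sections 2 and 3 of \cite{david}), the specialization $M(\widetilde{e},X)\widehat{\otimes}_{\mathcal{O}(X),\underline{k}} E'$ is a space of locally analytic $\mathcal{C}(X,k)_{\underline{k}}$-valued automorphic forms with level $\widetilde{e}\cdot\widetilde{e}_p$, and there is a Hecke-equivariant inclusion of the classical automorphic forms of weight $\underline{k}$ (with algebraic coefficient system matching $\pi_\infty$ and the same level) into this specialization. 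This is precisely Loeffler's comparison/classicality result for $p$-refined forms.

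Second, I would produce the classical input. The hypothesis $\iota_\infty(\widetilde{e})\pi_f^p\neq 0$ together with $\pi_p^{\widetilde{I}}\neq 0$ (which follows from $\pi_p$ being an unramified principal series with a non-trivial refinement) give a non-zero subspace $\iota_\infty(\widetilde{e}\cdot\widetilde{e}_p)\pi_f$ inside the space of classical automorphic forms of weight $\underline{k}$. Outside $S$ the spherical Hecke algebra $\widetilde{\mathcal{H}}_{\mathrm{ur},S}$ acts on the one-dimensional line $(\pi_f^S)^{\widetilde{G}(\widehat{\Z}^S)}$ through the character $\chi_{\mathrm{ur}}$ by definition. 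At $p$, since $\pi_p\simeq\mathrm{Ind}_{\widetilde{B}}^{\GL_2(\Q_p)}(\chi)$, Jacquet-module theory (or the Bernstein--Zelevinsky geometric lemma, cf.\ Section 6.4 of \cite{BC}) shows that $\pi_p^{\widetilde{I}}$ is two-dimensional and decomposes, as an Atkin--Lehner module, as a direct sum of the two characters attached to the two accessible refinements of $\pi_p$; pick a non-zero vector in the $(\chi\delta_{\widetilde{B}}^{-1/2})$-isotypic line. Tensoring this local vector at $p$ with a non-zero vector in $\iota_\infty(\widetilde{e})\pi_f^p$ gives a classical Hecke eigenform on which the unramified part of $\widetilde{\mathcal{H}}_S$ acts by $\chi_{\mathrm{ur}}$ and the Atkin--Lehner part by $\chi\delta_{\widetilde{B}}^{-1/2}$.

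Third, I would transport this eigenform to the overconvergent side via $\iota_p\circ\iota_\infty^{-1}$ and the classicality embedding, yielding a non-zero $v\in M(\widetilde{e},X)\widehat{\otimes}_{\mathcal{O}(X),\underline{k}} E'$. It remains to check that the eigenvalues of $\widetilde{\mathcal{H}}_S$ on $v$ agree with $\psi_{(\pi,\chi)}=\chi\delta_{\widetilde{B}}^{-1/2}\delta_{\underline{k}}\otimes\chi_{\mathrm{ur}}$. The unramified part is immediate from the previous paragraph. For the Atkin--Lehner part one has to track the extra twist by $\delta_{\underline{k}}$ that arises because, in Loeffler's normalisation, the diagonal $\widetilde{T}_0$-action on the overconvergent coefficient module $\mathcal{C}(X,k)_{\underline{k}}$ differs from that on the classical algebraic vector $\pi_\infty^\vee$ by the algebraic character $\underline{k}$; this reindexing is precisely what inserts the factor $\delta_{\underline{k}}(p^{a_1},p^{a_2})=p^{-k_1 a_1-k_2 a_2}$ into $\psi_{(\pi,\chi)}$.

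The routine part is steps one and two; the main obstacle is the fourth-paragraph bookkeeping of normalisations, namely verifying that the algebraic twist by $\delta_{\underline{k}}$ introduced by the classicality embedding exactly matches the definition of $\psi_{(\pi,\chi)}$, so that one does not end up with a spurious shift of the weight or of the Atkin--Lehner eigenvalues. Once this is checked, $v$ is the desired eigenvector and the lemma follows.
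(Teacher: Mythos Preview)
Your proposal is correct and is precisely the argument the paper is pointing to: the paper's own proof is simply a reference to Loeffler's construction (Section~3.9 of \cite{david}) and the author's thesis, and what you have written is an accurate unpacking of that reference. In particular, the inclusion $W_{\underline{k},E}\otimes\sigma_{\underline{k}}\subset\mathcal{C}(\underline{k},k(X))$ recorded in the paper immediately after the lemma is exactly the classicality embedding you invoke, and the twist $\sigma_{\underline{k}}$ there accounts for the factor $\delta_{\underline{k}}$ whose bookkeeping you flag as the main point to check.
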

\begin{proof}
This follows from the construction (see Section 3.9 of \cite{david}). For further details the reader may consult Section 3.2.1 of \cite{mythesis}. 
\end{proof}
Therefore any $x \in \mathcal{Z}_{\widetilde{e}}$ defines a point in $\widetilde{\mathcal{D}}$. Lemma \ref{7.2.7} implies that we get an injection $ \mathcal{Z}_{\widetilde{e}} \hookrightarrow \widetilde{\D}(\Qbar_p)$ whose image we denote by $Z$. Next we show that $Z$ accumulates in $\widetilde{\D}(\Qbar_p)$ by following arguments analogous to the ones in Chapter 6 of \cite{Chenevier}.

Let $\mathcal{C}(\underline{k},k(X)):= \mathcal{C}(X,k(X))\widehat{\otimes}_{\mathcal{O}(X),\underline{k}} E$ be the fibre of $\mathcal{C}(X,k(X))$ above $\underline{k} \in \widetilde{\mathcal{W}}(E)$. It carries an action of $\widetilde{\mathbb{I}}$.
There is an inclusion of $\widetilde{\mathbb{I}}$-representations
\begin{equation}
W_{\underline{k},E}\otimes \sigma_{\underline{k}} \subset \mathcal{C}(\underline{k},k(X)),
\label{normalizations2}
\end{equation}
where $W_{\underline{k},E}:= \operatorname{Sym}^{k_1-k_2}(E^2)\otimes \operatorname{det}^{k_2}$ is the algebraic representation of weight $\underline{k}$ of the group $\GL_2(\Q_p)$ and $\sigma_{\underline{k}}:\widetilde{\mathbb{I}}\rightarrow E^*$ is the character, which is trivial on $\widetilde{I}$ and equal to $z \mapsto z_1^{-k_1}z_2^{-k_2}$ for all $z \in \widetilde{\Sigma}^+$.\footnote{This is well-defined as for $z_1, z_2 \in \widetilde{\Sigma}^+$ we have $z_1\widetilde{I}z_2\subset \widetilde{I}z_1z_2\widetilde{I}$.} Define $W:=W_{\underline{k},E}\otimes \sigma_{\underline{k}}$.

\begin{defn}Let $M(\widetilde{e},\underline{k}, k(X)):= \widetilde{e} \mathcal{L}(\mathcal{C}(\underline{k},k(X)))$. 
We refer to 
\[M(\widetilde{e},\underline{k})^{cl}:= \widetilde{e} \mathcal{L}(W) \subset M(\widetilde{e},\underline{k},k(X))
\]
as the subspace of classical forms. If $E'/E$ is a finite extension, we say that a point $z \in \widetilde{\D}(E')$ of weight $\omega(z)=\underline{k}$ is classical if there exists $0 \neq v \in M(\widetilde{e},\underline{k})^{cl}\widehat{\otimes}_E E'$ such that $h \cdot v = \psi_z(h) v$ for all $h \in \widetilde{\mathcal{H}}_S$. 
\end{defn}

Below we will make use of the following theorem which shows that forms of small slope are classical. Again let $E'/E$ be a finite extension. 
\begin{thm}%[\cite{david} Theorem 3.9.6 \footnote{ See \cite{taibi} Section 3.2.1 for a correction of a slight mistake in the proof.}] 
Let $\underline{k}=(k_1,k_2)$ be as above. Let $\lambda \in E'^*$ and $\sigma:= v_p(\lambda)$. If 
$$ \sigma < 2(k_1 - k_2 +1)$$
then the generalized $\lambda$-eigenspace of $u_0$ acting on $M(\widetilde{e},\underline{k},k(X))\widehat{\otimes}_E E'$ is contained in the subspace $M(\widetilde{e},\underline{k})^{cl}\widehat{\otimes}_E E'$.  
\end{thm}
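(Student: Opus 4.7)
The result is the standard ``forms of small slope are classical'' theorem in the style of Coleman and Loeffler. My plan is to derive it from a short exact sequence of coefficient modules together with a slope estimate on the non-classical quotient.

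The starting point is to upgrade the inclusion $W := W_{\underline{k},E}\otimes \sigma_{\underline{k}} \subset \mathcal{C}(\underline{k},k(X))$ of (\ref{normalizations2}) to a short exact sequence
\[ 0 \to W \to \mathcal{C}(\underline{k},k(X)) \to Q \to 0 \]
of $\widetilde{\mathbb{I}}$-modules. Since $B$ is ramified at infinity, $\widetilde{G}(\R)$ is compact modulo centre, so the spaces underlying $\mathcal{L}(-)$ are finite disjoint unions of double cosets and the functor $\mathcal{L}(-)$ is exact in its coefficients. The idempotent $\widetilde{e}$ is a projector in characteristic zero, so $\widetilde{e}\mathcal{L}(-)$ is exact as well, and applying it to the sequence above produces a short exact sequence
\[ 0 \to M(\widetilde{e},\underline{k})^{cl} \to M(\widetilde{e},\underline{k},k(X)) \to \widetilde{e}\mathcal{L}(Q) \to 0 \]
of $\widetilde{\mathcal{H}}_S$-modules, in which all morphisms are $u_0$-equivariant.

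The key technical step is to show that $u_0$ acts on the quotient $\widetilde{e}\mathcal{L}(Q)\widehat{\otimes}_E E'$ with slopes bounded below by $2(k_1-k_2+1)$. To do this I would unwind Loeffler's construction of $\mathcal{C}(\underline{k},k(X))$: its sections are locally analytic functions on $\widetilde{I}$ transforming appropriately under $\widetilde{B}$, and the subrepresentation $W$ corresponds exactly to the polynomial part of degree at most $k_1-k_2$, so $Q$ is built out of the higher Taylor modes. The element $z_0=\operatorname{diag}(p^{-1},p)$ underlying $u_0$ rescales the relevant unipotent coordinate by $p^2$, which multiplies the Taylor coefficient of degree $n$ by a factor whose $p$-adic valuation is $2n$ (after accounting for the normalisation by $\sigma_{\underline{k}}$). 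Taking $n \geq k_1-k_2+1$, the first mode surviving in $Q$, produces the claimed bound. Equivalently, one can invoke Loeffler's classicality theorem from \cite{david} and match constants, using the identification $u_0 = S_p \cdot U_{p^2}$ from Remark \ref{operators}.

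With this slope bound available, the conclusion is a formal consequence. Let $v$ be in the generalised $\lambda$-eigenspace of $u_0$ on $M(\widetilde{e},\underline{k},k(X))\widehat{\otimes}_E E'$ with $\sigma := v_p(\lambda) < 2(k_1-k_2+1)$. On $\widetilde{e}\mathcal{L}(Q)\widehat{\otimes}_E E'$ the operator $u_0$ has no eigenvalue of slope $\sigma$, so $u_0 - \lambda$ is invertible there and the image of $v$ must vanish. The exact sequence then forces $v \in M(\widetilde{e},\underline{k})^{cl}\widehat{\otimes}_E E'$, as required. The main obstacle in carrying this out is the slope analysis of $u_0$ on $Q$; once the $p$-adic rescaling of each Taylor mode under $z_0$ is made precise, keeping careful track of the twist by $\sigma_{\underline{k}}$, the remaining argument is purely formal.
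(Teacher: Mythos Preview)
Your argument is correct and is, in fact, a sketch of the mechanism behind Loeffler's general classicality theorem. The paper, however, does not reprove any of this: its entire proof is a citation of Theorem~3.9.6 of \cite{david}, together with the observation (via Proposition~2.6.4 of \cite{david}) that the inequality $\sigma < 2(k_1-k_2+1)$ is precisely the ``small slope'' condition for the specific operator $u_0$. In other words, the only work the paper does is the numerical matching of constants that you mention in passing as the ``equivalent'' route. Your approach has the advantage of being self-contained and of exhibiting where the bound $2(k_1-k_2+1)$ actually comes from (the factor $p^2$ in the rescaling by $z_0=\operatorname{diag}(p^{-1},p)$, applied to Taylor modes of degree $\geq k_1-k_2+1$); the paper's approach is much shorter but treats Loeffler's result as a black box. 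Either is fine here, and your acknowledgement that the slope estimate on $Q$ is the one place requiring genuine care is exactly right.
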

\begin{proof} This is Theorem 3.9.6 of \cite{david}. We have used Proposition 2.6.4 of loc.\ cit.\ to show that if $\sigma < 2(k_1 - k_2 + 1)$ then $\sigma $ is a small slope for $u_0$ in the sense of Definition 2.6.1 of loc.\ cit. % $(\Delta_f = \Delta = \{e_1-e_2\})$.
\end{proof}
If $z \in Z$ is a point corresponding to a $p$-refined automorphic representation $(\pi,\chi)$ of weight $\underline{k}$ then for $\chi=(\chi_1,\chi_2)$ we have
$\psi(u_0)(z) = \iota_p(\chi_2(p) \chi_1(p)^{-1}) p^{k_1-k_2+1} .$
Note that if $z$ is classical and $\psi(u_0)(z) \neq p^{1+ k_1-k_2 \pm 1 }$ then $\pi_p$ is an unramified irreducible principal series and so $z$ belongs to $Z$. 

\begin{prop}
$\widetilde{\mathcal{D}}$ is an eigenvariety for $\mathcal{Z}_{\widetilde{e}}$ in the sense of Definition \ref{evforz}.
\label{cp}
\end{prop}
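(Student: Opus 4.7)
The plan is to verify the two conditions of Definition \ref{evforz} for the set $Z \subset \widetilde{\D}(\Qbar_p)$ appearing in the paragraph immediately preceding the proposition, defined as the image of the injection $\mathcal{Z}_{\widetilde{e}} \hookrightarrow \widetilde{\D}(\Qbar_p)$ afforded by Lemma \ref{clapoint} and Lemma \ref{7.2.7}. The bijection $Z \stackrel{\sim}{\longrightarrow} \mathcal{Z}_{\widetilde{e}}$ is then built in, so what remains is to show that $Z$ is both an accumulation subset and Zariski-dense.

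For accumulation at $z \in Z$, I would first apply the last clause of Theorem \ref{gEV} to obtain a basis $\{V_n\}$ of affinoid open neighbourhoods of $z$ with $\omega|_{V_n}$ finite onto an affinoid open $\omega(V_n) \subset \widetilde{\mathcal{W}}$. Since $V_n$ is affinoid the function $\psi(u_0)|_{V_n}$ is bounded, so there is $C_n \geq 0$ with $v_p(\psi(u_0)) \leq C_n$ on $V_n$. I would then observe that the dominant arithmetic weights $\underline{k} = (k_1,k_2) \in \Z^2$ with $k_1 - k_2 > C_n$ are $p$-adically (hence Zariski) dense in $\omega(V_n)$, since integers are $p$-adically dense in $\Z_p$ and only finitely many diagonals $k_1 - k_2 \leq C_n$ are excluded. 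For any such $\underline{k}$, the classicality theorem (Theorem 3.9.6 of \cite{david}, recalled above) applies because every $u_0$-eigenvalue $\lambda$ occurring above a point of $V_n$ satisfies $v_p(\lambda) \leq C_n < 2(k_1 - k_2 + 1)$, so every point of $V_n \cap \omega^{-1}(\underline{k})$ is classical. Moreover, the exceptional values $p^{1 + k_1 - k_2 \pm 1}$, of $v_p$-valuation $k_1 - k_2$ and $k_1 - k_2 + 2$, are ruled out because $C_n < k_1 - k_2$; so by the remark preceding the proposition each such classical point corresponds to an automorphic representation with unramified principal series at $p$ and lies in $Z$. Finiteness of $\omega|_{V_n}$ then transports the Zariski-density from $\omega(V_n)$ to $V_n$, giving accumulation.

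For the global Zariski-density of $Z$ I would appeal to Lemma \ref{2.6}: combined with accumulation, it suffices to verify that $Z$ meets every irreducible component $T$ of $\widetilde{\D}$. Picking any $x \in T$ and a neighbourhood $V$ as above, Corollary 2.2.9 of \cite{conrad} shows that $T \cap V$ is a union of irreducible components of $V$, and the last clause of Theorem \ref{gEV} ensures each of them surjects onto $\omega(V)$; choosing a weight $\underline{k} \in \omega(V) \cap \Z^2$ with $k_1 - k_2 > C$ (where $C$ bounds $v_p(\psi(u_0))$ on $V$), surjectivity produces a point of $T$ lying over $\underline{k}$, which by the preceding argument lies in $Z$. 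The main obstacle is the balancing in the accumulation step: we need $k_1 - k_2$ large enough both to invoke the classicality bound and to exclude the exceptional slopes that would yield classical points outside $Z$. The boundedness of $\psi(u_0)$ on an affinoid neighbourhood is precisely the mechanism making these two conditions simultaneously satisfiable along a Zariski-dense set of arithmetic weights.
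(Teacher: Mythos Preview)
Your proposal is correct and follows essentially the same strategy as the paper: bound the slope on a good affinoid neighbourhood, show that algebraic weights with $k_1-k_2$ exceeding that bound are Zariski-dense in the image, use the classicality criterion together with the exclusion of the two exceptional slopes to place all fibre points in $Z$, and then pull Zariski-density back along the finite, componentwise-surjective weight map; Zariski-density of $Z$ is then reduced via Lemma~\ref{2.6} to finding one point of $Z$ on each component. The only cosmetic difference is that the paper phrases the density of the relevant algebraic weights via a congruence condition and cites Lemma~2.7 of \cite{chenevier2} and Lemma~6.2.8 of \cite{Chenevier}, whereas you sketch these directly (your ``finitely many diagonals'' remark is a bit loose---the real point is that for each fixed $k_2$ the admissible $k_1$'s are still $p$-adically dense---but the conclusion is correct).
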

\begin{proof} We show that $Z$ is accumulation and Zariski-dense using the same arguments as in Section 6.4.5 of \cite{Chenevier}.
For the accumulation property let $z \in Z$ be a point and $z \in U$ an affinoid neighbourhood. Without loss of generality $U$ is such that $\omega(U)\subset \widetilde{\mathcal{W}}$ is open affinoid and connected, the morphism $\omega|_U:U \rightarrow \omega(U)$ is finite, and surjective when restricted to any irreducible component of $U$. 
The function~$\psi(u_0)$ is non-zero on $U$ and is therefore bounded from above and below. Therefore we can find $r \in \R$ such that 
$$ |\psi(u_0)(x)| > r \ \ \forall x \in U(\Qbar_p),$$
 and $\sigma_0 \in \R$
such that 
$$v_p(\psi(u_0)(x)) \leq \sigma_0 , \ \forall x \in U(\Qbar_p).$$
Let $\underline{k}^0=(k^0_1,k^0_2):=\omega(z)$ and $N$ be big enough such that 
$$\{(k_1,k_2) \in \Z^2 \ | \ k_i=k^0_i \text{ mod } p^N(p-1)\} \subset \omega(U).$$
For any $C,\sigma \in \R$ the set
$$\mathcal{X}_{C,\sigma}:=\{(k_1,k_2) \in \Z^2 | k_1-k_2 > max\{0,\sigma/2-1, C\} \}$$
is Zariski-dense in $\mathcal{W}$ and accumulates at all algebraic weights  (cf.\ Lemma 2.7 of \cite{chenevier2}).
Therefore the set
$$\mathcal{X}_{C,\sigma, N}:=\{(k_1,k_2) \in \Z^2 | k_1-k_2 > max\{0,\sigma/2-1, C\}, k_i=k^0_i \text{ mod }p^N (p-1)  \}$$
is Zariski-dense in $\omega(U)$.  
Then by Lemma 6.2.8 of \cite{Chenevier} the preimage 
\[(\omega|_U)^{-1}(\mathcal{X}_{C,N,\sigma})
\]
is Zariski-dense in $U$. 
Setting $C>v_p(r)$ and $\sigma > \sigma_0$ we have that any element in this preimage arises from a point in $Z$. This proves the accumulation property.

For the Zariski-density note that by Lemma \ref{2.6} it suffices to show that each irreducible component contains a point of $Z$, which follows as any irreducible component of $\widetilde{\mathcal{D}}$ surjects onto an irreducible component of $\widetilde{\mathcal{W}}$. 
\end{proof}

\subsubsection{The Galois representation attached to a point $x$ in $\widetilde{\mathcal{D}}(\overline{\Q}_p)$.}
For a finite set $S$ of places of $\Q$ we denote by $G_{\Q,S}$ the Galois group of a maximal algebraic extension of~$\Q$ that is unramified outside $S$. 
An eigenvariety $\widetilde{\D}$ of idempotent type for $\widetilde{G}$ carries a pseudo-representation and in particular we can attach to any point $x \in \widetilde{\D}(\Qbar_p)$ a Galois representation $\rho(x):G_{\Q,S}\rightarrow \GL_2(\Qbar_p)$. This can be proved just as in the case of eigenvarieties for unitary groups by applying the very general results of Section 7.1 of \cite{Chenevier}.\footnote{The fact that all rigid spaces in \cite{Chenevier} are over $\C_p$ whereas our rigid spaces are defined over finite extensions $E$ of $\Q_p$ does not affect the results.} 
We fix an eigenvariety $\widetilde{\D}:=\D_u(\widetilde{e})$ of idempotent type for $\widetilde{G}$, so in particular an idempotent $\widetilde{e}$ and a set $S$ of bad places. We drop the supscript $S$ from the notation in all Hecke algebras in this section as there is no confusion. 

Let $\widetilde{\mathcal{H}}^0_{ur}\subset \widetilde{\mathcal{H}}_{ur}$ be the subalgebra of $\mathcal{O}_E$-valued functions. Then, using Proposition 3.3.3 and Proposition 3.5.2 of \cite{david} as well as the fact that 
\[\mathcal{O}(\widetilde{\D})^0= \{f \in \mathcal{O}(\widetilde{\D}): \ \forall x \in |\widetilde{\D}|, |f(x)|\leq 1 \} 
\]
is compact, we see that the image of $\widetilde{\mathcal{H}}^0_{ur}$ under the map $\psi: \widetilde{\mathcal{H}}\rightarrow \mathcal{O}(\widetilde{\D})$ is relatively compact. 
Define $\mathcal{H}_{\widetilde{\D}}$ to be the closure of $\psi(1_{\operatorname{\widetilde{I}}}\otimes \widetilde{\mathcal{H}}_{\text{ur}}^0)$ in $\mathcal{O}(\widetilde{\D})$.

For $l \notin S$ let $Frob_l \in G_{\Q,S}$ be a Frobenius. 
For $l \notin S$ define the element $T_l := \psi\left(\mathbf{1}_{\operatorname{\widetilde{I}}} \otimes \mathbf{1}_{\GL_2(\widehat{\Z}^S) \left(\begin{smallmatrix} l& \\ & 1 \end{smallmatrix}\right)\GL_2(\widehat{\Z}^S)}\right) \in \mathcal{H}_{\widetilde{\D}}$. Here $\left(\begin{smallmatrix} l& \\ & 1 \end{smallmatrix}\right)$ is understood to be the matrix in $\GL_2(\widehat{\Z}^S)=\prod_{q\notin S} \GL_2(\Z_q)$, which is equal to $1$ for all $q \neq l$ and equal to $\left(\begin{smallmatrix} l& \\ & 1 \end{smallmatrix}\right)$ at $l$. 
Below we also use the elements $S_l := \psi \left(\mathbf{1}_{\operatorname{\widetilde{I}}} \otimes \mathbf{1}_{\GL_2(\widehat{\Z}^S) \left(\begin{smallmatrix} l& \\ & l \end{smallmatrix}\right)\GL_2(\widehat{\Z}^S)}\right) \in \mathcal{H}_{\widetilde{\D}}$ for $l \notin S$.
 
Note that Proposition \ref{cp} implies that Hypothesis \textbf{H} in Section 7.1 of \cite{Chenevier} is satisfied:
There exists a Zariski-dense set of points $Z \subset \widetilde{\D}(\overline{\Q}_p)$ and for each $z \in Z$ a continuous representation 
$$\rho(z): G_{\Q,S} \rightarrow \GL_2(\overline{\Q}_p), $$
such that for all $l \notin S$, $tr(\rho(z)(Frob_l))= T_l(z)$. Furthermore, for each $z \in Z$, $T_z:=tr(\rho(z)(\cdot)):G_{\Q,S} \rightarrow \overline{\Q}_p$ is a pseudo-representation of dimension $2$ of $G_{\Q,S}$ into $\overline{\Q}_p$.
\begin{prop}
There exists a unique continuous pseudo-representation 
$$\mathcal{T}: G_{\Q,S}\rightarrow \mathcal{H}_{\widetilde{\D}}$$
of dimension 2 such that for each $z \in Z$ the evaluation of $\mathcal{T}$ at $z$ is equal to $T_z$. 
\end{prop}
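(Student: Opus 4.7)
The plan is to invoke the general machinery developed in Section~7.1 of \cite{Chenevier}, which constructs exactly such a continuous pseudo-representation once its Hypothesis H has been verified. First I would confirm that all hypotheses are already in place: the reduced rigid space is $\widetilde{\D}$, the Zariski-dense subset is $Z$ (Proposition \ref{cp}), the pseudo-representations $T_z=\tr(\rho(z)(\cdot))$ of dimension $2$ have been produced for each $z\in Z$, and for every $l\notin S$ we have the element $T_l\in\mathcal{H}_{\widetilde{\D}}$ with the interpolation property $T_l(z)=T_z(\mathrm{Frob}_l)$. I would also record that $\mathcal{H}_{\widetilde{\D}}$ is the correct ambient ring: by construction it is the closure of $\psi(1_{\widetilde{I}}\otimes\widetilde{\mathcal{H}}_{ur}^0)$ in the compact subring $\mathcal{O}(\widetilde{\D})^0$, hence is itself a compact topological ring, which is what makes the continuity arguments go through.

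For uniqueness, suppose $\mathcal{T}_1,\mathcal{T}_2$ are two continuous pseudo-representations extending the $T_z$'s. By Chebotarev density the set $\{\mathrm{Frob}_l : l\notin S\}$ is dense in $G_{\Q,S}$, so continuity reduces the equality $\mathcal{T}_1=\mathcal{T}_2$ to checking $\mathcal{T}_i(\mathrm{Frob}_l)=T_l$ for each such $l$. But $\mathcal{T}_i(\mathrm{Frob}_l)$ and $T_l$ are elements of $\mathcal{O}(\widetilde{\D})$ that agree pointwise on the Zariski-dense set $Z$, and $\widetilde{\D}$ is reduced, so they are equal.

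For existence, the conceptual strategy is the following. One defines $\mathcal{T}(\mathrm{Frob}_l):=T_l$ on the dense subset of Frobenius elements. The polynomial identities that cut out $2$-dimensional pseudo-representations (in the sense of Rouquier/Taylor) are satisfied after specialization at every $z\in Z$, so by Zariski-density in the reduced space $\widetilde{\D}$ they hold identically in $\mathcal{H}_{\widetilde{\D}}$. To extend the definition from Frobenius elements to arbitrary $\sigma\in G_{\Q,S}$, one shows that if $(\mathrm{Frob}_{l_n})$ is a net in $G_{\Q,S}$ converging to $\sigma$, then $(T_{l_n})$ is a Cauchy net in $\mathcal{H}_{\widetilde{\D}}$, and defines $\mathcal{T}(\sigma)$ as its limit; the polynomial identities then pass to the limit and $\mathcal{T}$ is a pseudo-representation.

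The main obstacle, and the real content of Chenevier's argument that I would not re-derive, is establishing this Cauchy property: a priori $\mathcal{H}_{\widetilde{\D}}$ has a complicated Banach-module topology, while the continuity on each fibre $T_z$ is only a statement in the discretely valued field $\overline{\Q}_p$. The bridge is precisely the compactness of $\mathcal{H}_{\widetilde{\D}}$ together with Rouquier's characterization of pseudo-representations as the points of an affine scheme of finite type, which allows one to compare the topology coming from $\mathcal{O}(\widetilde{\D})$ with a uniform profinite topology on the target. Since all of these ingredients are handled in the proof of Proposition~7.1.1 of \cite{Chenevier} and nothing in the argument uses anything specific to unitary groups, the conclusion follows immediately in our setting.
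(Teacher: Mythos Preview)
Your proposal is correct and follows essentially the same approach as the paper: both reduce directly to Proposition~7.1.1 of \cite{Chenevier} after noting that Hypothesis~H is verified by Proposition~\ref{cp} and the relative compactness of $\mathcal{H}_{\widetilde{\D}}$. The paper's proof is a one-line citation; your exposition simply unpacks what that citation is doing. The only detail the paper adds that you do not mention explicitly is the cosmetic adjustment of replacing $\prod_{z\in Z}\C_p$ by $\prod_{z\in Z}E_z$ (the residue fields at $z$) in Chenevier's argument, reflecting that the eigenvarieties here live over finite extensions of $\Q_p$ rather than over $\C_p$.
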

\begin{proof} This is Proposition 7.1.1 of \cite{Chenevier} once we replace $\prod_{z \in Z}\C_p$ by $\prod_{z \in Z} E_z$, where~$E_z$ is the residue field at $z$, in the proof.
\end{proof}

We may use the same arguments to interpolate the one-dimensional pseudo-representations given by the determinant of $\rho(z)$, to build a ``determinant function'' on $\widetilde{\D}$, i.e.\ a function $\mathcal{S}:G_{\Q,S} \rightarrow \mathcal{O}(\widetilde{\D})^*$ such that 
\[\mathcal{S}(Frob_l)(z)=det(\rho(z)(Frob_l))
\]
 for all $l \notin S$ and $z \in Z$.

Recall that for $z \in Z$ the characteristic polynomial of $\rho(z)(Frob_l)$ for any $l \notin S$ is equal to $X^2 - T_l(z) X + l S_l(z)$.
In particular $l S_l(z)=det(\rho(z)(Frob_l))$. Let $S_z$ be the one-dimensional pseudo-representation $ det(\rho(z)(\cdot))$. 
\begin{prop} There exists a unique continuous pseudo-character 
$$\mathcal{S}: G_{\Q,S}\rightarrow \mathcal{H}_{\widetilde{\D}}^*$$
of dimension 1 such that for each $z \in Z$ the evaluation of $\mathcal{S}$ at $z$ is equal to $S_z$. 
\end{prop}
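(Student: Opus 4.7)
The plan is to mirror the strategy used for the pseudo-representation $\mathcal{T}$, but to take advantage of the fact that once $\mathcal{T}$ has been constructed, the determinant family can be extracted from it directly. Concretely, for every $g\in G_{\Q,S}$ I would define
$$\mathcal{S}(g):=\tfrac{1}{2}\bigl(\mathcal{T}(g)^2-\mathcal{T}(g^2)\bigr)\in\mathcal{H}_{\widetilde{\D}},$$
the standard Newton-type identity expressing $\det\rho(g)$ in terms of $\tr\rho(g)$ and $\tr\rho(g^2)$ when $\rho$ has dimension $2$. Since $\mathcal{T}$ is continuous and $\mathcal{H}_{\widetilde{\D}}$ is a topological subring of $\mathcal{O}(\widetilde{\D})$ closed under the ring operations, the map $\mathcal{S}$ is continuous and lands in $\mathcal{H}_{\widetilde{\D}}$. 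For each $z\in Z$, specialising at $z$ recovers $\det\rho(z)(g)=S_z(g)$ by the same identity applied to the matrix $\rho(z)(g)$.

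Next I would verify that $\mathcal{S}$ satisfies the axioms of a $1$-dimensional pseudo-character, i.e.\ $\mathcal{S}(1)=1$ and $\mathcal{S}(gh)=\mathcal{S}(g)\mathcal{S}(h)$ for all $g,h\in G_{\Q,S}$. Both are equalities of global analytic functions on $\widetilde{\D}$; they hold pointwise on $Z$ because each $S_z$ is a character, and since $Z$ is Zariski-dense in the reduced rigid space $\widetilde{\D}$ the identities lift to the whole eigenvariety. Once multiplicativity is established the image automatically lies in units: from $\mathcal{S}(g)\mathcal{S}(g^{-1})=\mathcal{S}(1)=1$ we conclude $\mathcal{S}(g)\in\mathcal{H}_{\widetilde{\D}}^{*}$, so no separate non-vanishing argument is needed.

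For uniqueness, any other candidate $\mathcal{S}'$ agrees with $\mathcal{S}$ on the Zariski-dense set $Z$ at every $g\in G_{\Q,S}$, and since $\widetilde{\D}$ is reduced the two continuous maps into $\mathcal{H}_{\widetilde{\D}}\subset\mathcal{O}(\widetilde{\D})$ coincide. As an alternative route one could repeat, verbatim, the direct interpolation argument of Proposition~7.1.1 of \cite{Chenevier} applied to the $1$-dimensional pseudo-characters $\{S_z\}_{z\in Z}$: hypothesis \textbf{H} is supplied by the same Zariski-dense $Z$ and the continuous characters $S_z=\det\rho(z)(\cdot)$ already in hand, with $lS_l\in\mathcal{H}_{\widetilde{\D}}$ serving as the interpolating functions at Frobenius elements.

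I do not anticipate a genuine obstacle. The only step requiring minor care is to confirm that $\mathcal{S}(g)$ belongs to the closed subring $\mathcal{H}_{\widetilde{\D}}$ rather than merely to $\mathcal{O}(\widetilde{\D})$, which follows because $\mathcal{T}(g)\in\mathcal{H}_{\widetilde{\D}}$ for every $g$ and $\mathcal{H}_{\widetilde{\D}}$ is a topological subring of $\mathcal{O}(\widetilde{\D})$ stable under multiplication; everything else is a direct consequence of the continuity of $\mathcal{T}$ and the Zariski density of $Z$ already proved in Proposition \ref{cp}.
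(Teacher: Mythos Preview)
Your argument is correct, but it follows a different route from the paper. The paper's one-line proof simply reapplies Chenevier's Proposition~7.1.1 to the family of $1$-dimensional pseudo-characters $\{S_z\}_{z\in Z}$, with $(lS_l)_{l\notin S}$ playing the role of the interpolating functions at Frobenii; this is exactly what you list as your ``alternative route''. Your primary approach instead exploits that $\mathcal{T}$ has already been built and reads off $\mathcal{S}$ from the Newton identity $\mathcal{S}(g)=\tfrac{1}{2}(\mathcal{T}(g)^2-\mathcal{T}(g^2))$, then uses Zariski density of $Z$ to check multiplicativity and invertibility. The paper in fact records this identity only \emph{after} constructing $\mathcal{S}$ (see the remark following the proposition), so you have reversed the logical order. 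Your approach is more economical once $\mathcal{T}$ exists; the paper's has the virtue of being entirely parallel to the construction of $\mathcal{T}$ and of giving $\mathcal{S}(g)\in\mathcal{H}_{\widetilde{\D}}$ without any further comment.

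One small caveat: your claim that $\mathcal{S}(g)\in\mathcal{H}_{\widetilde{\D}}$ follows ``because $\mathcal{T}(g)\in\mathcal{H}_{\widetilde{\D}}$ and $\mathcal{H}_{\widetilde{\D}}$ is a topological subring'' is not quite complete when $p=2$. The ring $\mathcal{H}_{\widetilde{\D}}$ is defined as the closure of the image of the $\mathcal{O}_E$-subalgebra $\widetilde{\mathcal{H}}^0_{ur}$, hence is a compact $\mathcal{O}_E$-algebra and need not contain $\tfrac12$. The fix is easy: your formula certainly defines a continuous map $\mathcal{S}:G_{\Q,S}\to\mathcal{O}(\widetilde{\D})$; on Frobenii it agrees with $lS_l\in\mathcal{H}_{\widetilde{\D}}$ by Zariski density of $Z$, and since Frobenii are dense in $G_{\Q,S}$ and $\mathcal{H}_{\widetilde{\D}}$ is closed in $\mathcal{O}(\widetilde{\D})$, the image lies in $\mathcal{H}_{\widetilde{\D}}$. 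With this adjustment your proof goes through.
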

\begin{proof} Apply  Proposition 7.1.1 of \cite{Chenevier} using the functions $(l S_l)_{l \notin S}$.
\end{proof}
\begin{rem} For $g \in G_{\Q,S}$ the rigid analytic functions $\mathcal{S}(g)$ and 
\[ g\mapsto (\mathcal{T}^2(g)-\mathcal{T}(g^2))/2
\]
are the same as they agree on $Z$. Therefore we have that for any $x \in \widetilde{\D}(\Qbar_p)$, $S_x=det(\rho(x))$.
\end{rem}

For later purposes define 
\begin{equation}
\begin{aligned}
\mathcal{P}:G_{\Q,S} &\rightarrow &\mathcal{O}(\widetilde{\D}) \\
g &\mapsto& \frac{\mathcal{T}(g)^2}{\mathcal{S}(g)}.
\label{psr}
\end{aligned}
\end{equation}

\subsection{Eigenvarieties of idempotent type for $G$}
We may also specialize the general theory of \cite{david} to produce eigenvarieties for $G$, the group of norm one elements in $\widetilde{G}$: 

Let $e:= \prod{e_l} \in C_c^\infty (G(\A_f^{p}),E)$ be an idempotent. 
For $Y\subset \mathcal{W}$ an affinoid subspace and $k \geq k(Y)$ we have the Banach $\mathcal{O}(Y)$-modules $\mathcal{C}(Y,k):= \mathcal{C}(Y,1,k)$ from Def. 2.3.1 of \cite{david}. 
Just as before we can build the space $M(e,Y,k)$ of $\mathcal{C}(Y,k)$-valued automorphic forms of $G$.
Any $u \in \mathcal{H}_S$ that is supported in $G(\A_f^{S})\times I\Sigma^{++}I$ gives rise to a compact operator on $M(e,Y,k)$. Again we have links 
$\alpha_{YZ}: M(e,Z) \rightarrow M(e,Y) \widehat{\otimes}_{\mathcal{O}(Y)} \mathcal{O}(Z)$ for $Z \subset Y \subset \mathcal{W}$ two open affinoid subdomains. 
We get the following
\begin{prop} For any idempotent $e \in C_c^\infty (G(\A_f^{p}),E)$ and $u \in \mathcal{H}_S$ as above, the datum $(\mathcal{W}, M(e)_\bullet, \alpha_\bullet, \mathcal{H}_S,u)$ is an  eigenvariety datum. We denote by $\mathcal{D}_u(e)$ the associated eigenvariety.
\end{prop}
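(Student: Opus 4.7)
The plan is to verify that the datum satisfies each of the axioms of an eigenvariety datum spelled out in the paragraph preceding Theorem \ref{gEV}, applying Loeffler's general machinery from \cite{david} with $G$ in place of $\widetilde{G}$. Since $G$ satisfies the hypotheses of \cite{gross} Proposition 1.4 (as noted in Section \ref{sec: zoo}), the general construction applies without any extra arithmetical conditions to worry about.

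First I would verify the local data over an affinoid $Y \subset \mathcal{W}$. The modules $\mathcal{C}(Y,k)$ are orthonormalizable Banach $\mathcal{O}(Y)$-modules carrying an action of the monoid $\mathbb{I}$ (specializing Section 2 of \cite{david}). The space $M(e,Y,k) = e \cdot e_I \mathcal{L}(\mathcal{C}(Y,k))$ is then a Banach $\mathcal{O}(Y)$-module satisfying property (Pr) by \cite{david} Proposition 3.3.3 (applied to $G$); moreover it comes equipped with an action of $\mathcal{H}_S$ extending the monoid action of $\mathbb{I}$ at $p$ and the unramified Hecke action away from $S$. Since $u$ is supported in $G(\A_f^S) \times I\Sigma^{++}I$, the action of $u$ on $M(e,Y,k)$ is compact by Theorem 3.7.2 of \cite{david} (whose proof goes through verbatim for $G$, the key input being that elements of $\Sigma^{++}$ are strictly contracting on the relevant locally analytic induced representations).

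Second I would verify the existence and compatibility of the links. For $Z \subset Y \subset \mathcal{W}$ open affinoids, one takes $M(e,Y) := M(e,Y,k(Y))$ and constructs the link $\alpha_{YZ}$ following Lemma 3.12.2 of \cite{david}; the construction is purely formal in terms of the comparison maps between the modules $\mathcal{C}(Y,k)$ for varying $k$, and does not depend on the particular group, only on the fact that the tame level $e$ and the compact operator $u$ are fixed. The cocycle condition $\alpha_{YZ'} = \alpha_{YZ} \circ \alpha_{ZZ'}$ for $Z' \subset Z \subset Y$ follows by the same argument as in loc.\ cit.

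I do not expect any serious obstacle here: the content of the proposition is essentially the statement that Loeffler's construction, which is formulated in sufficient generality in \cite{david}, specializes to the group $G$. The only mild point worth checking is that the admissibility of the Iwahori subgroup $I \subset \SL_2(\Q_p)$ (in the sense of Def.\ 2.2.3 of \cite{david}) holds, which was already recorded in Section \ref{subsec: iwahori}, together with the fact that $e \cdot e_I$ is an idempotent in the full Hecke algebra so that the cutting down to $M(e,Y,k)$ preserves property (Pr). With these in place, the existence of $\D_u(e) = \mathcal{E}$ and its properties follow directly from Theorem \ref{gEV}.
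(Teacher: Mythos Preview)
Your proposal is correct and follows exactly the approach implicit in the paper: the paper does not give a separate proof for this proposition, treating it as the direct specialization of Loeffler's general machinery (cf.\ Theorem 3.11.3 of \cite{david}) to the group $G$, just as the analogous proposition for $\widetilde{G}$ in Section \ref{subsec: firsteigenvarieties} was stated with only a reference. Your verification of property (Pr), compactness of $u$, and the links via the cited results in \cite{david} is precisely what underlies the paper's citation.
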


Let $k \in \Z_{\geq 0}$. Analogously to the definitions in Section \ref{subsec: firsteigenvarieties} we say:
\begin{defn}
\begin{enumerate}
\item Let $\pi_p$ be an irreducible smooth representation of $\GL_2(\Q_p)$ such that $\pi_p^I \neq 0$. A character $\chi: \Sigma \rightarrow \C^*$ is called an accessible refinement of $\pi_p$ if $\chi \delta_B^{-1/2}:\Sigma \rightarrow \C^*$ occurs in $\pi_p^I$.
\item A $p$-refined automorphic representation of weight $k$ of $G(\A)$ is a pair $(\pi, \chi)$ such that
\begin{itemize}
	\item $\pi$ is an automorphic representation of $G(\A)$, 
	\item $\pi_p$ has a non-zero $I$-fixed vector and $\chi: \Sigma \rightarrow \C^*$ is an accessible refinement of $\pi_p$, 
	\item $\pi_\infty \cong  \operatorname{Sym}^k(\C^2)$. 
\end{itemize}
\end{enumerate}
\end{defn}
Let $e:= \prod{e_l} \in C_c^\infty (G(\A_f^{p}),\Qbar)$ be an idempotent and assume $\iota_p(e)$ takes values in~$E$. For a $p$-refined automorphic representation $(\pi,\chi)$ of $G(\A)$ such that $\iota_\infty(e)\cdot \pi_f^p \neq 0$ define the character 
$$ \psi_{(\pi,\chi)}:\mathcal{H}_S\rightarrow \Qbar_p, \psi_{(\pi,\chi)}:= \chi \delta_B^{-1/2} \delta_k \otimes \chi_{ur},$$
where $\delta_k: \mathcal{A}_p(G) \rightarrow \Qbar_p$ is given by $(p^a,p^{-a}) \mapsto p^{-ka}$, and $\chi_{ur}$ is the obvious character. Again one shows that a $p$-refined automorphic representation gives rise to a point on $  \D_u(e):=\D_u(\iota_p(e))$.

\subsection{A first auxiliary eigenvariety}
\label{subsec: aux}
As before let $\widetilde{e}=\prod \widetilde{e}_l \in C_c^\infty (\widetilde{G}(\A_f^{p}),\Qbar)$ be an idempotent and assume $\iota_p(\widetilde{e})$ takes values in $E$. 
Choose a finite set of primes $S$ containing all the bad primes.
Fix $u \in \widetilde{\mathcal{H}}_S$ of the form $ \mathbf{1}_{[\widetilde{I}z\widetilde{I}]} \otimes 1_{\widetilde{\mathcal{H}}_{ur,S}} $ for some $z \in \Sigma^{++}$, so in particular $u$ is in the image of the morphism $\lambda:\mathcal{H}_S\hookrightarrow \widetilde{\mathcal{H}}_S$ constructed in Section \ref{subsec: iwahori}. 
Over an affinoid $X$ in $\widetilde{\mathcal{W}}$ we have the modules $M(\widetilde{e},X)$ as before.
By composing the morphism $\widetilde{\mathcal{H}}_S \rightarrow \operatorname{End}_{\mathcal{O}(X)}(M(\widetilde{e},X))$ with the inclusion $\lambda$  we get a morphism $\mathcal{H}_S \rightarrow \operatorname{End}_{\mathcal{O}(X)}(M(\widetilde{e},X))$. Note that $u$ acts as a compact endomorphism. We abbreviate $\widetilde{\mathcal{D}}:= \widetilde{\mathcal{D}}_u(\widetilde{e}).$
\begin{prop}
\begin{enumerate}[(a)]
	\item For any open affinoid $X \subset \widetilde{\mathcal{W}}$ there exists an eigenvariety $\D'_X$ attached to $(X,M(\widetilde{e},X),\mathcal{H}_S,u)$.  
Moreover there exists a reduced separated rigid analytic space $\mathcal{D}'$ over $\widetilde{\mathcal{W}}$ such that for any open affinoid $X \subset \widetilde{\mathcal{W}}$ the pullback of $\D'$ to $X$ is canonically isomorphic to $\D'_X$. 
\item There exists a unique morphism $\zeta':\widetilde{\mathcal{D}} \rightarrow \D'$ over $\widetilde{\mathcal{W}}$ such that the diagram
$$\xymatrix{
\mathcal{H}_S \  \ar[d] \ar@{^{(}->}[r]^\lambda & \widetilde{\mathcal{H}}_S \ar[d] \\
  \mathcal{O}(\D') \ar[r]^{\zeta'^*}  &\mathcal{O}(\widetilde{\D}) }$$
commutes. In particular for any finite extension $E'/E$ the map $\widetilde{\mathcal{D}}(E')\rightarrow \D'(E')$ is given by restricting a system of Hecke eigenvalues $\widetilde{\mathcal{H}}_S\rightarrow E'$ to $\mathcal{H}_S$.
\end{enumerate}
\label{D'}
\end{prop}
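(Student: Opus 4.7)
For part (a) the idea is to verify that $(\widetilde{\mathcal{W}}, M(\widetilde{e})_\bullet, \alpha_\bullet, \mathcal{H}_S, u)$ is an eigenvariety datum and invoke Theorem \ref{gEV}. The Banach modules $M(\widetilde{e},X)$ and links $\alpha_{XY}$ are the ones already used for $\widetilde{\D}$, so property (Pr) and the link compatibilities are inherited. The algebra $\mathcal{H}_S$ acts on each $M(\widetilde{e},X)$ through the composition $\mathcal{H}_S \hookrightarrow \widetilde{\mathcal{H}}_S \to \operatorname{End}_{\mathcal{O}(X)}(M(\widetilde{e},X))$ given by $\lambda$. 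Since $u \in \operatorname{Im}(\lambda)$ there is a unique $u_0 \in \mathcal{H}_S$ with $\lambda(u_0)=u$; its action through $\mathcal{H}_S$ agrees with the action of $u$ through $\widetilde{\mathcal{H}}_S$, hence is compact. Theorem \ref{gEV} then produces $\D'$ together with the local eigenvarieties $\D'_X$ and the required compatibility.

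For part (b) I would construct $\zeta'$ locally, using the admissible cover from Remark \ref{admcover}. Fix an open affinoid $X \subset \widetilde{\mathcal{W}}$. Both $\widetilde{\D}|_X$ and $\D'|_X$ are built from the same spectral variety $Z_u \subset X \times \mathbb{A}^1$ of the compact operator $u$ acting on $M(\widetilde{e},X)$, and the associated admissible cover $\mathcal{C}$ of $Z_u$ is the same. For $Y \in \mathcal{C}$ with $Y \subset f^{-1}(U)$, the local pieces are $\widetilde{\mathcal{E}}(Y) = \operatorname{Sp}(\widetilde{\mathbf{T}}(Y))$ and $\mathcal{E}'(Y) = \operatorname{Sp}(\mathbf{T}'(Y))$, where the two Hecke algebras are the images of $\widetilde{\mathcal{H}}_S \otimes \mathcal{O}(U)$ and $\mathcal{H}_S \otimes \mathcal{O}(U)$ in the common endomorphism ring $\operatorname{End}_{\mathcal{O}(U)}\bigl((M(\widetilde{e},X) \widehat{\otimes} \mathcal{O}(U))_{fs}\bigr)$. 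The inclusion $\lambda$ induces an inclusion $\mathbf{T}'(Y) \hookrightarrow \widetilde{\mathbf{T}}(Y)$, and hence a morphism of affinoids $\widetilde{\mathcal{E}}(Y) \to \mathcal{E}'(Y)$.

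These morphisms are compatible with restriction to smaller elements of $\mathcal{C}$ (the pieces and cover are identical on both sides), so they glue to a morphism $\widetilde{\D}|_X \to \D'|_X$. Further gluing as $X$ varies over an admissible affinoid cover of $\widetilde{\mathcal{W}}$ gives $\zeta':\widetilde{\D}\to \D'$ over $\widetilde{\mathcal{W}}$. Commutativity of the diagram involving $\lambda$ and $\zeta'^{*}$ is automatic from the construction, and on $E'$-valued points $\zeta'$ sends a system of eigenvalues $\widetilde{\mathcal{H}}_S \to E'$ to its restriction via $\lambda$. For uniqueness, any morphism over $\widetilde{\mathcal{W}}$ making the diagram commute induces, on $\overline{\mathbb{Q}}_p$-points, the same weight and the same restricted Hecke eigensystem; since $\D'$ is reduced and separated, Lemma \ref{7.2.7} forces the two morphisms to coincide.

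The only genuine work is the local-to-global gluing, but this is formal: the admissible cover $\mathcal{C}$ of $Z_u$ and the realization of the local pieces as $\operatorname{Sp}$ of the image of the appropriate Hecke algebra in a single endomorphism ring of the finite-slope part are common to both eigenvarieties, so the algebra inclusion produced by $\lambda$ transports directly to an affinoid morphism on each piece.
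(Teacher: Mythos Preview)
Your proposal is correct and follows essentially the same approach as the paper: part (a) is obtained by observing that the same modules and links, now equipped with the $\mathcal{H}_S$-action via $\lambda$, form an eigenvariety datum to which Theorem \ref{gEV} applies; part (b) is built locally over the admissible cover of Remark \ref{admcover}, using that the inclusion $\lambda$ induces an inclusion of the Hecke-image algebras inside the common endomorphism ring of the finite-slope part, and uniqueness is deduced from Lemma \ref{7.2.7}. The paper makes the gluing in (b) slightly more explicit by passing to a nested cover of $\widetilde{\mathcal{W}}$ and invoking Lemma 5.2 of \cite{kevin} to identify intersections $\widetilde{\D}(Y_1)\cap\widetilde{\D}(Y_2)=\widetilde{\D}(Y_1\cap Y_2)$, but this is exactly the formal step you flag at the end.
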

\begin{proof} The first claim or part (a) follows by applying Proposition \ref{EV} to the data $(X,M(\widetilde{e},X),\mathcal{H}_S,u)$. As the links $\alpha_{XY}$ are obviously also links for the action of $\mathcal{H}_S$ we can glue the local pieces, i.e.\ we can apply Theorem \ref{gEV}, which proves part (a). 

For the proof of (b) we work with the admissible cover $\widetilde{\D}(Y)_{Y\in \mathcal{C}_X}$ of the eigenvariety $\widetilde{\mathcal{D}}_X$ for $X \subset \widetilde{\mathcal{W}}$ affinoid open that we described in Remark \ref{admcover} and use the notation of that Remark. We may assume $Y$ is such that $U=f(Y)$ is connected. By construction $\widetilde{\D}(Y)$ is an affinoid $\operatorname{Sp}(\widetilde{\mathcal{H}}_S(Y))$. Here $\widetilde{\mathcal{H}}_S(Y)$ is the reduced image of $\widetilde{\mathcal{H}}_S \otimes_E \mathcal{O}(U)$ in the endomorphism ring $\operatorname{End}_{\mathcal{O}(U)}((M(\widetilde{e},X)\hat{\otimes}_{\mathcal{O}(X)}\mathcal{O}(U))_{fs})$.  
Analogously let $\mathcal{D}'(Y)$ denote the part of $\mathcal{D}'_X$ above $Y$. By construction $\mathcal{D}'(Y)= \operatorname{Sp}(\mathcal{H}_S(Y))$, where $\mathcal{H}_S(Y)$ is the reduced image of $\mathcal{H}_S\otimes_E \mathcal{O}(U)$ in the endomorphism ring $\operatorname{End}_{\mathcal{O}(U)}((M(\widetilde{e},X)\hat{\otimes}_{\mathcal{O}(X)}\mathcal{O}(U))_{fs})$.

Let $X$ be as above and $Y \in \mathcal{C}_X$. Then $\lambda: \mathcal{H}_S \hookrightarrow \widetilde{\mathcal{H}}_S$ induces an injection $\lambda^*_Y:\mathcal{H}_S(Y) \hookrightarrow \widetilde{\mathcal{H}}_S(Y)$. Let $\zeta'_{Y}:\widetilde{\mathcal{D}}(Y) \rightarrow \mathcal{D}'$ be the morphism given by composing~$\lambda_Y$ with the natural inclusion $\mathcal{D}'(Y)\subset \mathcal{D}'$. The affinoid spaces $\widetilde{\mathcal{D}}(Y), Y\in \mathcal{C}_X$, as $X$ runs through an admissible cover of $\widetilde{\mathcal{W}}$, form an admissible cover of $\widetilde{\mathcal{D}}$. 

It is easy to see that the $\zeta'_{Y}$ glue. Namely choose a nested cover of $(X_i)_{i \in \mathbb{N}}$ of $\widetilde{\mathcal{W}}$ and let $\widetilde{\mathcal{D}}(Y_1), Y_1 \in \mathcal{C}_{X_i}$ and $\widetilde{\mathcal{D}}(Y_2), Y_2 \in \mathcal{C}_{X_j}$ be two affinoid spaces of the cover of~$\widetilde{\D}$ and assume $i\leq j$. Then $Y_1 \in \mathcal{C}_{X_j}$ and by Lemma 5.2 of \cite{kevin} we know that $\widetilde{\mathcal{D}}(Y_1)\cap \widetilde{\mathcal{D}}(Y_2) = \widetilde{\mathcal{D}}(Y_1 \cap Y_2)$. Furthermore, using $\widetilde{\mathcal{H}}_S(Y_1\cap Y_2) \cong \widetilde{\mathcal{H}}_S(Y_1)\widehat{\otimes}_{\mathcal{O}(Y_1)} \mathcal{O}(Y_1 \cap Y_2)$, we see that ${\zeta'_{Y_1}}|_{\widetilde{\mathcal{D}}(Y_1 \cap Y_2)}=\zeta'_{{Y_1}\cap{Y_2}} $. 
It is clear that $\zeta'$ makes the diagram in $(b)$ commutative. 

Finally the uniqueness of $\zeta'$ follows from Lemma \ref{7.2.7} and the commutativity of the diagram as $\widetilde{\D}$ and $\D'$ are reduced.
\end{proof}
Recall how we defined a set of classical points $\mathcal{Z}_{\widetilde{e}} \subset \Hom_{ring}(\widetilde{\mathcal{H}}_S,\Qbar_p) \times \widetilde{\mathcal{W}}(\Qbar_p)$ in Equation (\ref{cp}). Let $\mathcal{Z}_{\widetilde{e}}' \subset \Hom_{ring}(\mathcal{H}_S,\Qbar_p) \times \widetilde{\mathcal{W}}(\Qbar_p) $ be the set of all pairs $(\psi_{(\pi,\chi)}\circ \lambda,\underline{k})$ such that $(\psi_{(\pi,\chi)},\underline{k}) \in \mathcal{Z}_{\widetilde{e}}$.

\begin{prop}
$\mathcal{D}'$ is the eigenvariety for $\mathcal{Z}'_{\widetilde{e}}$ in the sense of Definition \ref{evforz}.
\label{Z'e}
\end{prop}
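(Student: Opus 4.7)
The plan is to set $Z' := \zeta'(Z) \subset \D'(\Qbar_p)$, where $Z \subset \widetilde{\D}(\Qbar_p)$ is the subset provided by Proposition \ref{cp}, and verify the three properties required by Definition \ref{evforz}. The bijection $Z' \xrightarrow{\sim} \mathcal{Z}'_{\widetilde{e}}$ is almost immediate: by the very definition of $\mathcal{Z}'_{\widetilde{e}}$ as the image of $\mathcal{Z}_{\widetilde{e}}$ under restriction along $\lambda$, the map $z' \mapsto (\psi_{z'}, \omega'(z'))$ hits every element of $\mathcal{Z}'_{\widetilde{e}}$, and injectivity is Lemma \ref{7.2.7}.

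The key intermediate step is to show that $\zeta'$ is a finite surjection on $\Qbar_p$-points. Using the local description from Remark \ref{admcover}, $\zeta'$ is locally dual to the inclusion $\mathcal{H}_S(Y) \hookrightarrow \widetilde{\mathcal{H}}_S(Y)$ of finite $\mathcal{O}(U)$-submodules of $\operatorname{End}_{\mathcal{O}(U)}((M(\widetilde{e},X)\hat\otimes \mathcal{O}(U))_{fs})$, so $\zeta'$ is finite. For surjectivity on points: given $x \in \D'(\Qbar_p)$, the $\psi_x$-eigenspace in the finite-slope fibre at $\omega'(x)$ is a nonzero finite-dimensional space stable under the commutative algebra $\widetilde{\mathcal{H}}_S$, and any simultaneous $\widetilde{\mathcal{H}}_S$-eigenvector in it (which exists after extending scalars) produces a preimage $\tilde{x} \in \widetilde{\D}(\Qbar_p)$. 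Zariski-density of $Z'$ in $\D'$ is now formal: any analytic subset $W' \supset Z'$ satisfies $\zeta'^{-1}(W') \supset Z$, hence equals $\widetilde{\D}$ by Zariski-density of $Z$, and surjectivity of $\zeta'$ then forces $W' = \D'$.

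For accumulation I would imitate the proof of Proposition \ref{cp} directly on $\D'$: given $z' \in Z'$, pick an affinoid neighbourhood $U' \subset \D'$ from the basis supplied by the last paragraph of Theorem \ref{gEV}, let $u^\flat \in \mathcal{H}_S$ be the preimage of $u$ under $\lambda$ (which exists by our choice of $u \in \lambda(\mathcal{H}_S)$), and bound $v_p(\psi(u^\flat))$ above and below on $U'$. Choose $N$ so that weights congruent to $\omega'(z')$ mod $p^N(p-1)$ lie in $\omega'(U')$. For suitable $C,\sigma$, the set $\mathcal{X}_{C,\sigma,N}$ is Zariski-dense in $\omega'(U')$, so its preimage in $U'$ is Zariski-dense in $U'$ by Lemma 6.2.8 of \cite{Chenevier}. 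Any point $x'$ in this preimage has small slope and classical weight $\underline{k}$, so by the small-slope classicality theorem the corresponding $\mathcal{H}_S$-eigenvector lies in the finite-dimensional classical subspace $M(\widetilde{e},\underline{k})^{cl}$; a simultaneous $\widetilde{\mathcal{H}}_S$-diagonalization there produces a classical $\tilde{x} \in Z$ with $\zeta'(\tilde x) = x'$, where the lower bound on $v_p(\psi(u^\flat))$ ensures that the underlying $\pi_p$ is an unramified irreducible principal series (as in the proof of Proposition \ref{cp}), so $x' \in Z'$.

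The main technical point is the simultaneous diagonalization of the commutative action of $\widetilde{\mathcal{H}}_S$ on the finite-dimensional $\mathcal{H}_S$-eigenspaces, used both for the surjectivity of $\zeta'$ and for lifting classical points of $\D'$ to classical points of $\widetilde{\D}$; this is the only genuinely new ingredient beyond Proposition \ref{cp}, and once it is in hand the rest of the proof is a direct transcription of the earlier arguments.
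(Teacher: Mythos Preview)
Your argument is correct, but the paper's proof is considerably shorter and uses a different mechanism. Rather than proving surjectivity of $\zeta'$ via simultaneous diagonalisation and then rerunning the small-slope argument of Proposition~\ref{cp} on $\D'$, the paper exploits directly that $\zeta'$ is locally a \emph{closed immersion} on rings: over each piece $Y \in \mathcal{C}_X$ the map $\zeta'^*\colon \mathcal{O}(\D'(Y)) \hookrightarrow \mathcal{O}(\widetilde{\D}(Y))$ is the inclusion $\mathcal{H}_S(Y) \hookrightarrow \widetilde{\mathcal{H}}_S(Y)$. Injectivity of $\zeta'^*$ immediately implies that the image of any Zariski-dense subset of $\widetilde{\D}(Y)$ is Zariski-dense in $\D'(Y)$ (a function on $\D'(Y)$ vanishing on $\zeta'(Z)$ pulls back to a function on $\widetilde{\D}(Y)$ vanishing on $Z$, hence zero). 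Since $Z$ is already known to be Zariski-dense in each $\widetilde{\D}(Y)$ by Proposition~\ref{cp}, this gives accumulation and Zariski-density of $Z'$ in one stroke, with no need for surjectivity of $\zeta'$, no diagonalisation, and no second pass through small-slope classicality.

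What your approach buys is an explicit description of the fibres of $\zeta'$ (finite, nonempty) and a direct verification that every small-slope classical-weight point of $\D'$ lies in $Z'$; these facts are true and potentially useful elsewhere, but they are not needed for the statement at hand. The paper's approach buys economy: the whole proof is three lines once one observes that dominant morphisms (injective on rings) preserve Zariski-density of subsets.
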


\begin{proof} As before let $Z$ be image of the map $\mathcal{Z}_{\widetilde{e}} \hookrightarrow \widetilde{\mathcal{D}}(\Qbar_p)$. We prove that $Z':= \zeta'(Z)$ is accumulation and Zariski-dense in $\mathcal{D}'$. We use the same notation as in the proof of Proposition \ref{D'}. For the accumulation property let $z \in Z'$ be a point and let $V$ be an open affinoid neighbourhood of $z$. Without loss of generality (see the proof of Theorem 3.1.1 of \cite{taibi}) we may assume that $V$ is a connected component of an affinoid of the form $\D'(Y)$ for $Y\in\mathcal{C}_X$ for some $X \subset \widetilde{\mathcal{W}}$ affinoid open. 
Now $Z$ is Zariski-dense in $\widetilde{\mathcal{D}}(Y)$ and the morphism $\zeta:\mathcal{O}(\mathcal{D}'(Y)) \hookrightarrow \mathcal{O}(\widetilde{\D}(Y))$ is injective.
Therefore $Im(Z)$ is Zariski-dense in $\mathcal{D}'(Y)$ and therefore also Zariski-dense in the connected component $V$. This shows the accumulation property.

The Zariski-density also follows from this as the $\mathcal{D}'(Y)$ admissibly cover $\mathcal{D}'$.
\end{proof}

\subsubsection{Some properties of the morphism $\zeta'$} \label{subsubsec: twists}
We discuss when two points $x, y \in \widetilde{\mathcal{D}}(\Qbar_p)$ have the same image under $\zeta'$. 

If $x$ and $y$ are in $Z$ then there exist $p$-refined automorphic representations of $\widetilde{G}(\A)$, say $(\pi,\chi)$ and $(\pi',\chi')$ unramified at all places not in $S$ that give rise to the points~$x$ and $y$. Then $\zeta'(x)=\zeta'(y)$ implies that the weights agree, so 
 $$ \omega(x)=\omega(y)$$
and therefore $\pi_\infty \cong \pi'_\infty$. Furthermore we have 
$$\iota_\infty\circ\iota_p^{-1}({\psi_x}|_{\mathcal{H}_{ur}})=\iota_\infty\circ\iota_p^{-1}({\psi_y}|_{\mathcal{H}_{ur}}).$$
It is not hard to show that this implies that for all $l \notin S$, $\pi_l$ is an unramified twist of~$\pi'_l$. 
Then a result of Ramakrishnan (Theorem 4.1.2 of \cite{Ramakrishnan}) applied to the Jacquet--Langlands transfer of $\pi$ and $\pi'$ shows that $\pi$ and $\pi'$ are in fact global twists of each other. We omit the details as we prefer to give a more elementary characterization of the points that are getting identified under $\zeta'$ in terms of the associated Galois representation.
 
For that let $x$ and $y \in \widetilde{\D}(\Qbar_p)$ be arbitrary points such that $\zeta'(x)=\zeta'(y)$. Recall the definition of $\mathcal{P}:= \mathcal{T}^2/\mathcal{S}$ from Section \ref{sec: 3.2.1}, Equation (\ref{psr}). For a point $z \in \widetilde{\D}(\Qbar_p)$ we let $P_z$ be the specialization $T_z^2/S_z$. As the restrictions of $\psi_x$ and $\psi_y$ to $\mathcal{H}_{ur}$ agree we have (use Equation (\ref{urops}))
$$P_x(Frob_l)=\frac{tr^2(\rho(x)(Frob_l))}{det(\rho(x)(Frob_l))}=\frac{tr^2(\rho(y)(Frob_l))}{det(\rho(y)(Frob_l))}=P_y(Frob_l)$$
for all $l \notin S$ and so by continuity we have that $P_x=P_y$. 
Let $\eta:\GL_2(\Qbar_p)\rightarrow \PGL_2(\Qbar_p)$ denote the natural map. Recall from Section 7.2 of \cite{Chenevier} that there exists an open subspace $\widetilde{\D}^{irr}\subset \widetilde{\D}$ such that $x \in \widetilde{\D}(\Qbar_p)$ lies in $\widetilde{\D}^{irr}$ if and only if $\rho(x)$ is irreducible. 
\begin{prop}
Assume $x$ and $y \in \widetilde{\D}^{irr}(\overline{\Q}_p)$ are such that $\zeta'(x)= \zeta'(y)$. Then 
$$ \eta \circ \rho(x) \cong \eta \circ \rho(y).$$
\end{prop}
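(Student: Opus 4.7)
Since the preceding paragraph establishes $P_x = P_y$, what remains is to upgrade this equality of rigid-analytic functions to an isomorphism of projective representations. My plan is to route the argument through the three-dimensional adjoint representation $\operatorname{Ad}^0\rho(z): G_{\Q, S} \to \GL(\mathfrak{sl}_2(\Qbar_p))$, defined as the composition of $\eta \circ \rho(z)$ with the adjoint embedding $\operatorname{Ad}: \PGL_2(\Qbar_p) \hookrightarrow \GL(\mathfrak{sl}_2(\Qbar_p))$, which identifies $\PGL_2$ with the special orthogonal group of the determinant form on $\mathfrak{sl}_2$.

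First I will carry out a direct eigenvalue computation showing that $\tr(\operatorname{Ad}^0\rho(z)(g)) = P_z(g) - 1$: if $\rho(z)(g)$ has eigenvalues $a, b$, then $\operatorname{Ad}^0\rho(z)(g)$ acts on $\mathfrak{sl}_2$ with eigenvalues $1, a/b, b/a$. Combined with $P_x = P_y$ this implies that $\operatorname{Ad}^0\rho(x)$ and $\operatorname{Ad}^0\rho(y)$ have identical trace functions on $G_{\Q, S}$. Next I will establish semisimplicity: since $x, y \in \widetilde{\D}^{irr}$, both $\rho(x)$ and $\rho(y)$ are (absolutely) irreducible, so the Zariski closures of their images in $\GL_2$ are reductive (a non-trivial normal unipotent subgroup would admit a fixed line, contradicting irreducibility). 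In characteristic zero this forces $\operatorname{Ad}^0\rho(x)$ and $\operatorname{Ad}^0\rho(y)$ to be semisimple as $G_{\Q, S}$-representations, and Brauer--Nesbitt then yields an isomorphism $\operatorname{Ad}^0\rho(x) \cong \operatorname{Ad}^0\rho(y)$ as three-dimensional representations.

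The final and most delicate step will be to descend this isomorphism of three-dimensional representations to an isomorphism $\eta \circ \rho(x) \cong \eta \circ \rho(y)$ of $\PGL_2$-valued representations. The main obstacle is that the intertwining element a priori only lives in $\GL(\mathfrak{sl}_2)$ rather than in $\operatorname{Ad}(\PGL_2)$. I plan to handle this by using the identification $\operatorname{Ad}: \PGL_2 \stackrel{\sim}{\to} \operatorname{SO}(\mathfrak{sl}_2, \det)$ of algebraic groups, together with the fact that every algebraic automorphism of $\PGL_2$ over $\Qbar_p$ is inner (the Dynkin diagram of type $A_1$ admits no non-trivial symmetry), which will allow the intertwiner in $\GL(\mathfrak{sl}_2)$ to be replaced by one lying in $\operatorname{Ad}(\PGL_2)$, yielding the desired isomorphism of projective representations.
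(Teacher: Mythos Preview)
Your approach coincides with the paper's: both embed $\PGL_2 \hookrightarrow \GL_3$ via the adjoint representation (equivalently, the isomorphism $\PGL_2 \cong \operatorname{SO}_3$), use $P_x = P_y$ to match the $\GL_3$-traces, obtain a $\GL_3$-intertwiner, and then descend to $\PGL_2$-conjugacy. You are in fact more careful than the paper about one point---the paper passes from elementwise conjugacy in $\GL_3$ to global $\GL_3$-conjugacy without comment, whereas you correctly supply the semisimplicity of $\operatorname{Ad}^0\rho(x)$ needed to invoke Brauer--Nesbitt.

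For the descent step the paper simply cites a result of Ramakrishnan. Your inner-automorphism plan is the right idea, but be aware it needs one more ingredient: the statement ``every automorphism of $\PGL_2$ is inner'' only applies once you know the intertwiner $M \in \GL(\mathfrak{sl}_2)$ normalizes $\operatorname{SO}(\mathfrak{sl}_2,\det)$. When $\operatorname{Ad}^0\rho(x)$ is irreducible this is immediate from Schur's lemma (the invariant quadratic form is unique up to scalar, so $M$ lies in $\operatorname{GO}_3 = \Qbar_p^* \cdot \operatorname{SO}_3$), but when $\rho(x)$ is dihedral $\operatorname{Ad}^0\rho(x)$ is reducible and the space of invariant symmetric forms can have dimension greater than one, so a short separate argument (or the cited reference) is needed there.
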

\begin{proof}
The equation $P_x=P_y$ implies that $\eta(\rho(x)(g))$ is conjugate to $\eta(\rho(y)(g))$ for all $g \in G_\Q$.
We have an injection $i:\PGL_2(\Qbar_p) \hookrightarrow \GL_3(\Qbar_p)$ with image $\operatorname{SO}_3(\Qbar_p)$.
Composing $\rho(x)$ and $\rho(y)$ with this injection we see that $i(\rho(x)(g))$ is conjugate to $i(\rho(y)(g))$ for all $g$. 
Therefore $i\circ\eta \circ \rho(x)$ and $i\circ \eta \circ \rho(y)$ are conjugate by an element~$B$ in $\GL_3(\Qbar_p)$. But then in fact $\eta \circ \rho(x)$ and $ \eta \circ \rho(y)$ are conjugate in $\PGL_2(\Qbar_p)$, see e.g.\ Theorem B and p.34--35 of \cite{appram}.
\end{proof}

\subsection{A second auxiliary eigenvariety}
Let $e \in C^{\infty}_c(G(\A_f^p),E)$ be an idempotent, $S$ a fixed set of bad primes, $\mathcal{H}_S$ the Hecke algebra associated to $S$ and let $\mathcal{D}:=\mathcal{D}_u(e)$ be an eigenvariety of idempotent type $e$ for $G$.
Define $\D'':= \widetilde{\mathcal{W}}\times_{\mathcal{W}} \D$ to be the pullback of $\D$ under the morphism $\mu:\widetilde{\mathcal{W}}\rightarrow \mathcal{W}$ from Section \ref{sec: weights}.
The space $\mathcal{W}$ is nested. Choose a nested cover, e.g.\ for $i\in \mathbb{N}$, let $X_i\subset \mathcal{W}$ be the affinoid subspace that is the disjoint union of $p-1$ (resp.\ 2) closed disks of radius $p^{-1/i}$ if $p \neq 2$ (resp.\ if $p =2$). Then $\mathcal{W}=\bigcup_{i \in \mathbb{N}} X_i$ and for $i<j$ the map $\mathcal{O}(X_j) \rightarrow \mathcal{O}(X_i)$ is compact. Recall from Lemma \ref{nested} that for any $V\subset \widetilde{\mathcal{W}}$ affinoid open, there exists $X \subset \mathcal{W}$ affinoid open such that $\mu(V) \subset X$. 
\begin{lem}
Let $V\subset \widetilde{\mathcal{W}}$ be an open affinoid and $X \subset \mathcal{W}$ be an affinoid open such that $\mu(V) \subset X$. Then $\D''_V:= V \times_{\widetilde{\mathcal{W}}} \D''$ is the eigenvariety for the data $(V,M(e,X)\widehat{\otimes}_{\mathcal{O}(X)}\mathcal{O}(V), \mathcal{H}_S,u)$.
\end{lem}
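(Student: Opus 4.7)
The plan is to apply Proposition~\ref{EV} to the base-changed datum over $V$ and then invoke its uniqueness. Using that fibre products compose and that $\mu|_V$ factors through $X$ by hypothesis, I first rewrite
\[
\D''_V = V\times_{\widetilde{\mathcal{W}}}(\widetilde{\mathcal{W}}\times_{\mathcal{W}}\D) = V\times_{\mathcal{W}}\D = V\times_X\D_X,
\]
where $\D_X$, by Theorem~\ref{gEV}, is canonically the eigenvariety of $(X,M(e,X),\mathcal{H}_S,u)$. Set $M_V:=M(e,X)\widehat{\otimes}_{\mathcal{O}(X)}\mathcal{O}(V)$. Since property (Pr) and compactness of $u$ are preserved under completed base change along $\mathcal{O}(X)\to\mathcal{O}(V)$, Proposition~\ref{EV} produces an eigenvariety $\mathcal{E}_V$ of $(V,M_V,\mathcal{H}_S,u)$, and the task reduces to producing an isomorphism $\mathcal{E}_V\cong V\times_X\D_X$ over $V$ compatible with the action of $\mathcal{H}_S$.

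I would construct this identification locally using Buzzard's explicit construction recalled in Remark~\ref{admcover}. The Fredholm series $\det(1-Tu)$ of $u$ on $M_V$ is the image of the Fredholm series on $M(e,X)$ under $\mathcal{O}(X)\{\{T\}\}\to\mathcal{O}(V)\{\{T\}\}$, so the spectral variety satisfies $Z_u^V = V\times_X Z_u^X$ and the admissible cover $\mathcal{C}^X$ pulls back to an admissible cover of $Z_u^V$. For $Y\in\mathcal{C}^X$ sitting over an affinoid $U\subset X$, set $U^V:=V\times_X U$ and $Y^V:=V\times_X Y$. Because the finite-slope decomposition is cut out by the idempotent attached to a factorisation of the Fredholm series, and such factorisations base-change, one has
\[
(M_V\widehat{\otimes}_{\mathcal{O}(V)}\mathcal{O}(U^V))_{fs} \cong (M(e,X)\widehat{\otimes}_{\mathcal{O}(X)}\mathcal{O}(U))_{fs}\widehat{\otimes}_{\mathcal{O}(U)}\mathcal{O}(U^V).
\]
Passing to the image of $\mathcal{H}_S\otimes_E\mathcal{O}(U^V)$ in the endomorphism ring of the right-hand side, this identifies the local Hecke algebra for $\mathcal{E}_V$ over $Y^V$ with $\mathcal{H}_S(Y)\widehat{\otimes}_{\mathcal{O}(U)}\mathcal{O}(U^V)$, so $\mathcal{E}_V(Y^V)\cong\mathcal{E}(Y)\times_U U^V$. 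Gluing these isomorphisms across the admissible cover yields $\mathcal{E}_V\cong V\times_X\D_X=\D''_V$, and the identification is canonical by Lemma~\ref{7.2.7}.

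The only non-formal step is the compatibility of the finite-slope functor with the completed base change along $\mathcal{O}(U)\to\mathcal{O}(U^V)$, which I expect to be the main technical point; the rest is the gluing formalism of Theorem~\ref{gEV}. A cleaner alternative, avoiding the explicit construction, is to verify conditions (i)--(iii) of Proposition~\ref{EV} directly for $V\times_X\D_X$: properties (i) and (ii) are preserved under base change along $V\to X$ since finiteness and surjectivity of module maps are base-change stable, and (iii) follows because $\phi$-finite $E'$-valued eigenvalue systems for $M_V$ at a point $v\in V(E')$ agree, via the canonical identification $M_V\widehat{\otimes}_{\mathcal{O}(V),v}E' = M(e,X)\widehat{\otimes}_{\mathcal{O}(X),\mu(v)}E'$, with those for $M(e,X)$ at $\mu(v)\in X(E')$, which is exactly what describes the $E'$-points of the fibre product.
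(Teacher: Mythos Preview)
Your overall strategy matches the paper's: reduce to $\D''_V \cong V\times_X \D_X$ and then show that Buzzard's construction commutes with the base change $V\to X$. The paper dispatches the second step in one line by noting that $\mu_V:V\to X$ is \emph{flat} and citing Lemmas~5.4 and~5.5 of \cite{kevin}, which package exactly the compatibility you are trying to reprove.

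The gap in your write-up is that you never invoke flatness, and it is genuinely needed at the step where you claim the local Hecke algebra over $Y^V$ is $\mathcal{H}_S(Y)\widehat{\otimes}_{\mathcal{O}(U)}\mathcal{O}(U^V)$. Your identification of finite-slope parts via the Fredholm idempotent is fine without flatness, but ``image of a ring map'' does not commute with arbitrary base change: from the injection $\mathcal{H}_S(Y)\hookrightarrow \End_{\mathcal{O}(U)}(N_{fs})$ you only get, after $-\otimes_{\mathcal{O}(U)}\mathcal{O}(U^V)$, a map whose image is a \emph{quotient} of $\mathcal{H}_S(Y)\otimes\mathcal{O}(U^V)$, and equality requires $\mathcal{O}(U)\to\mathcal{O}(U^V)$ to be flat. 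The same issue resurfaces in your alternative approach: even if (i)--(iii) hold for $V\times_X\D_X$, you must also know this fibre product is reduced, and that again comes from flatness (indeed smoothness) of $\mu$. So the argument is salvageable, but you should insert the observation that $\mu$ is flat (it is essentially a projection of polydisks, cf.\ the discussion around Lemma~\ref{nested}) before the Hecke-algebra identification, or simply cite Buzzard's base-change lemmas as the paper does.
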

\begin{proof}
Note that $\D''_V \cong V \times_X \D_X$ and that the induced map $\mu_V: V\rightarrow X$ is flat. Therefore the result follows from Lemma 5.4 and 5.5 of \cite{kevin}. 
\end{proof}
\begin{lem} Let $R' \rightarrow R$ be a morphism of affinoid $E$-algebras. Assume $M $ and $N$ are Banach $R'$-modules with a continuous action of a commutative $E$-algebra \textbf{T} and assume a fixed element $u \in \textbf{T}$ acts compactly. Assume $\alpha: M \rightarrow N$ is a primitive link. Then the base change $\alpha_R: M\widehat{\otimes}_{R'}R \rightarrow N \widehat{\otimes}_{R'} R $ is also a primitive link.
\end{lem}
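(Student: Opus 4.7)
The plan is to unpack the definition of a primitive link and verify all the relevant conditions after applying the completed base-change functor $-\widehat{\otimes}_{R'} R$. Recall from Section 5 of \cite{kevin} that a primitive link $\alpha: M \to N$ consists of a continuous $R'$-linear, $\mathbf{T}$-equivariant map $\alpha$ together with a continuous $R'$-linear map $\beta: N \to M$ such that $\alpha \circ \beta = u_N$ and $\beta \circ \alpha = u_M$, where $u_M$ and $u_N$ denote the compact endomorphisms of $M$ and $N$ induced by the action of $u \in \mathbf{T}$.

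The first step is to produce the candidate partner on the base change: set $M_R := M\widehat{\otimes}_{R'} R$, $N_R := N \widehat{\otimes}_{R'} R$, $\alpha_R := \alpha \widehat{\otimes} \mathrm{id}_R$, and $\beta_R := \beta \widehat{\otimes} \mathrm{id}_R$. Both maps are continuous and $R$-linear by the universal property of the completed tensor product, and they are $\mathbf{T}$-equivariant because the $\mathbf{T}$-action on $M$ and $N$ commutes with the scalar action of $R'$ and hence extends to an $R$-linear $\mathbf{T}$-action on $M_R$ and $N_R$. Functoriality of $-\widehat{\otimes}_{R'} R$ then yields the composition identities
$$\alpha_R \circ \beta_R = (\alpha \circ \beta)_R = (u_N)_R \quad \text{and} \quad \beta_R \circ \alpha_R = (\beta \circ \alpha)_R = (u_M)_R.$$

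The only substantive remaining step is to identify the base-changed endomorphism $(u_M)_R$ with the compact endomorphism $u_{M_R}$ of $M_R$ coming from the induced $\mathbf{T}$-action, and likewise for $N$. This uses two standard facts about Banach modules (cf.\ Part I of \cite{kevin}): first, that the action of $\mathbf{T}$ on $M_R$ is precisely the base change of its action on $M$, so $(u_M)_R$ and $u_{M_R}$ coincide as endomorphisms; and second, that the base change of a compact operator between orthonormalizable (more generally, property-(Pr)) Banach modules is again compact. Granted these, the pair $(\alpha_R, \beta_R)$ satisfies the defining identities of a primitive link between the Banach $R$-modules $M_R$ and $N_R$, which proves the lemma. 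I expect this last compactness-preservation step to be the only place where actual content enters; everything else is pure functoriality of completed tensor products.
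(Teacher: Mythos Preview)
Your proposal is correct and is precisely the unpacking that the paper has in mind: the paper's own proof reads in full ``This follows immediately from the definitions,'' and what you have written is exactly that verification, spelling out functoriality of $-\widehat{\otimes}_{R'}R$ and the preservation of compactness under base change. One minor inconsistency: in your recollection of the definition you only require $\beta$ to be $R'$-linear, whereas Buzzard's definition asks both maps to be $\mathbf{T}$-equivariant; you do treat both as $\mathbf{T}$-equivariant in the body of the argument, so this is only a slip in the opening sentence and does not affect the proof.
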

\begin{proof} This follows immediately from the definitions.
\end{proof}
In fact $\mathcal{D}''$ is a global eigenvariety. 
For any open affinoid $V \subset \widetilde{\mathcal{W}}$  choose the minimal $i \in \mathbb{N}$ such that $\mu(V) \subset X_i$, denote it by $i_V$ and define $X_V:=X_{i_V}$. 
We let $M(V)$ be the orthonormalizable Banach $\mathcal{O}(V)$-module 
$$M(V):= M(e,X_V)\widehat{\otimes}_{\mathcal{O}(X_V)}\mathcal{O}(V).$$
It satisfies property (Pr) (see \cite{kevin} Lemma 2.13). 
If $U \subset V \subset \widetilde{\mathcal{W}}$ is an inclusion of open affinoids, then $i_{U}\leq i_{V}$. We have a link 
$$\alpha_{X_{U} X_{V}}: M(e,X_{U})\rightarrow M(e,X_V)\widehat{\otimes}_{\mathcal{O}(X_V)}\mathcal{O}(X_U).$$
The previous lemma implies that the pullback under $U\rightarrow X_U$
$$\alpha_{UV}:= (\alpha_{X_{U} X_{V}})_{\mathcal{O}(U)}: M(U) \rightarrow M(V)\hat{\otimes}_{\mathcal{O}(V)}\mathcal{O}(U) $$
is a link.
We abbreviate this data as $(\widetilde{\mathcal{W}},(\mu^*M)_\bullet, \alpha_\bullet, \mathcal{H}_S,u)$. The fact that eigenvarieties are unique then implies the following proposition.
\begin{prop} The rigid space
$\mathcal{D}''$ is the global eigenvariety associated to the datum $(\widetilde{\mathcal{W}},(\mu^*M)_\bullet, \alpha_\bullet, \mathcal{H}_S,u)$.
\end{prop}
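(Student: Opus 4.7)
The plan is to invoke the uniqueness clause of Theorem \ref{gEV}. The datum $(\widetilde{\mathcal{W}},(\mu^*M)_\bullet,\alpha_\bullet,\mathcal{H}_S,u)$ canonically determines an eigenvariety $\mathcal{E}$ with the property that for every affinoid open $V\subset\widetilde{\mathcal{W}}$, the pullback $\mathcal{E}_V:=V\times_{\widetilde{\mathcal{W}}}\mathcal{E}$ is the local eigenvariety of $(V,M(V),\mathcal{H}_S,u)$. So it suffices to show (i) that the datum really is an eigenvariety datum, and (ii) that $\mathcal{D}''$ has the same universal local property.

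For (i), I need to check that the morphisms $\alpha_{UV}:M(U)\to M(V)\widehat{\otimes}_{\mathcal{O}(V)}\mathcal{O}(U)$, defined via the previous lemma as base changes of the $\alpha_{X_UX_V}$ (the latter well-defined since $U\subset V$ forces $i_U\leq i_V$ and hence $X_U\subset X_V$), are primitive links and satisfy the cocycle condition $\alpha_{UW}=\alpha_{VW}\circ\alpha_{UV}$ for $U\subset V\subset W$. The link property comes from the previous lemma. For the cocycle, the analogous identity $\alpha_{X_UX_W}=\alpha_{X_VX_W}\circ\alpha_{X_UX_V}$ holds in the original eigenvariety datum for $G$ over $\mathcal{W}$ (since these are among the links guaranteed by Loeffler's construction), and the formation of primitive links commutes with base change, so pulling back along $U\to X_U$ yields the desired identity on the $\widetilde{\mathcal{W}}$-side. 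The properties of $u$ (being compact on each $M(V)$) are inherited from the compactness of $u$ on $M(e,X_V)$ by base change.

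For (ii), fix an affinoid open $V\subset\widetilde{\mathcal{W}}$ and the associated $X_V\subset\mathcal{W}$ with $\mu(V)\subset X_V$. The preceding lemma gives a canonical isomorphism of $\mathcal{D}''_V=V\times_{\widetilde{\mathcal{W}}}\mathcal{D}''$ with the local eigenvariety of $(V,M(e,X_V)\widehat{\otimes}_{\mathcal{O}(X_V)}\mathcal{O}(V),\mathcal{H}_S,u)$, and by definition the module here is exactly $M(V)$. Hence $\mathcal{D}''_V$ is the local eigenvariety attached to $(V,M(V),\mathcal{H}_S,u)$, which is precisely the characterizing property of $\mathcal{E}_V$. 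Varying $V$ over an admissible affinoid cover of $\widetilde{\mathcal{W}}$ and invoking uniqueness then gives a canonical isomorphism $\mathcal{D}''\cong\mathcal{E}$ over $\widetilde{\mathcal{W}}$.

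The only point that requires any real care is the cocycle condition for $\alpha_\bullet$: one has to be slightly careful with the dependence of $X_V$ on $V$ (the choice is canonical via minimality of $i_V$, so different choices do not cause trouble), and one has to confirm that base change commutes with the composition of primitive links, which is straightforward from the definition of link as in Section 5 of \cite{kevin}. Once this bookkeeping is handled, the result follows formally from the two uniqueness statements (local uniqueness in Proposition \ref{EV} and the gluing in Theorem \ref{gEV}).
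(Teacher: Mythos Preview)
Your proposal is correct and follows exactly the route the paper takes: the paper's own proof is the single sentence ``The fact that eigenvarieties are unique then implies the following proposition,'' and your sketch simply unpacks what that uniqueness argument requires (the datum is an eigenvariety datum, and the previous lemma identifies $\mathcal{D}''_V$ with the local eigenvariety for $(V,M(V),\mathcal{H}_S,u)$). One small wording point: the $\alpha_{X_UX_V}$ are in general links (compositions of primitive links) rather than single primitive links, but you already note that base change commutes with composition, so this causes no trouble.
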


\section{Classical functoriality - Background from Labesse Langlands}
In this section we explain the results of \cite{LL} that we need below.
Let $F$ be a totally real number field, $B$ a definite quaternion algebra over $F$, $\widetilde{G}$ the algebraic group over $F$ defined by the units in $B$ and $G$ the algebraic group defined by the elements of reduced norm 1. As before $S_B$ denotes the set of places, where $B$ is ramified.

\subsection{Local and global $L$-packets}
Let $\widetilde{H}$ be a group and $H$ a normal subgroup. For a representation $\pi$ of $H$ and $ g \in \widetilde{H}$ we denote by $^g\pi$ the representation given by $^g\pi(h)= \pi(g^{-1}hg)$ for $h \in H$.
\begin{defn}
Let $v$ be a finite place of $F$. 
A local $L$-packet of $G(F_v)$ is a finite set $\{\pi_i: i \in I\}$ of irreducible admissible representations of $G(F_v)$, such that there exists an irreducible admissible representation $\widetilde{\pi}$ of $\widetilde{G}(F_v)$ and a positive integer $c$ with 
\begin{equation} 
\widetilde{\pi}|_{G(F_v)} \cong \bigoplus_{i \in I} c \pi_i .
\label{locpac}
\end{equation}
Starting from an irreducible admissible representation $\widetilde{\pi}$ of $\widetilde{G}(F_v)$ we denote by $\Pi(\widetilde{\pi})$ the $L$-packet $\{\pi_i: i \in I\}$ defined by $\widetilde{\pi}$.
\end{defn}

\begin{rem}
We will make use of the following facts:
\begin{itemize}
\item Any irreducible smooth representation of $G(F_v)$ occurs in the restriction of a representation from $\widetilde{G}(F_v)$ and therefore belongs to an $L$-packet, see Lemma 2.5 of \cite{LL}. In particular $L$-packets partition the set of equivalence classes of admissible irreducible representations into finite sets. 
	\item Size of local packets: For $v \notin S_B$ $L$-packets of $G(F_v)$ are of size $1,2$ or $4$, see p.739 of \cite{LL}, and the multiplicity $c$ in (\ref{locpac}) is equal to 1 (see Lemma 2.6 of \cite{LL}). For $v \in S_B$ $L$-packets of $G(F_v)$ are of size $1$ or $2$ and $c \in \{1,2\}$ (see p.751 and Lemma 7.1 of \cite{LL}).
	
	\item If $\widetilde{\pi}$ is an unramified representation of $\GL_2(F_v)$, there exists a unique member in $\Pi(\widetilde{\pi})$ which has a non-zero $\SL_2(\mathcal{O}_{F_v})$-fixed vector. This is obvious if $\widetilde{\pi}$ is a 1-dimensional representation. Otherwise 
	\[\widetilde{\pi}\cong Ind_{\widetilde{B}}^{\GL_2(F_v)}(\chi_1,\chi_2)\cong Ind_{\widetilde{B}}^{\GL_2(F_v)}(\chi_1\chi_2^{-1},1)\otimes \chi_2,\] 
	where $\chi_1$ and $\chi_2$ are unramified. The restriction of $\widetilde{\pi}$ to $\SL_2(F_v)$ is isomorphic to $Ind_{B}^{\SL_2(F_v)}(\chi_1\chi_2^{-1})$. 
	By Proposition 3.2.4 of \cite{lansky} we have that $\dim(\widetilde{\pi}^{\SL_2(\mathcal{O}_{F_v})})=1$. We denote the unique member of the $L$-packet of $\widetilde{\pi}$ that has a non-zero $\SL_2(\mathcal{O}_{F_v})$-fixed vector by $\pi^0$ and in a global context by $\pi^0_v$. 
\item Any set of representatives $\{g_i: i \in I \}$ of the cosets $\widetilde{G}(F_v)/G(F_v)F_v^* $ permutes the irreducible subrepresentation of $\widetilde{\pi}|_{G(F_v)}$.
\end{itemize}
\end{rem}

\begin{rem}
At the archimedean places one can define $L$-packets in the same way. As we are only considering definite quaternion algebras the situation simplifies. The irreducible representations of $\widetilde{G}(\R)$ are finite-dimensional, and they stay irreducible when restricted to the subgroup $G(\R)$ of norm one elements, as $\widetilde{G}(\R)$ is generated by $G(\R)$ and its centre. The $L$-packets are therefore singletons.
\end{rem}

\begin{defn}
Let $\widetilde{\pi}=\otimes \widetilde{\pi}_v$ be a discrete automorphic representation of $\widetilde{G}(\A)$. The global $L$-packet of $G(\A)$ associated to $\widetilde{\pi}$ is 
$$\Pi(\widetilde{\pi}):= \{\otimes \pi_v \ |\  \pi_v \in \Pi(\widetilde{\pi}_v), \pi_v=\pi^0_v \text{ for almost all } v \}.$$
\end{defn}

\begin{rem} 
Let $L/F$ be a quadratic extension and $\widetilde{\theta}: L^*\backslash \A^*_L \rightarrow \C^*$ be a Gr{\"o}{\ss}en-character, which does not factor through the norm $N^L_F(\cdot)$. In \cite{JL} Jacquet and Langlands show how to associate a cuspidal automorphic representation $\tau(\widetilde{\theta})$ of $\GL_2(\A_F)$ to $\widetilde{\theta}$. 

Via class field theory the character $\widetilde{\theta}$ gives rise to a representation of the global Weil group $W_{L}$ of $L$ and to a two-dimensional representation $Ind(W_F,W_L,\widetilde{\theta})$ of the Weil group $W_F$ of $F$. The representation $\tau(\widetilde{\theta})$ is characterized uniquely by an equality of $L$-functions and $\epsilon$-factor of (twists of) $\tau(\widetilde{\theta})$ and (twists of) $Ind(W_F,W_L,	\widetilde{\theta})$ (we refer to \S 12 of \cite{JL} for details regarding the construction and characterization).
\end{rem}

If $\tau(\widetilde{\theta})$ is in the image of the global Jacquet--Langlands transfer from $\widetilde{G}$, i.e.\ if $\tau(\widetilde{\theta})_v$ is a discrete series representation for all $v \in S_B$, we denote the preimage $JL^{-1}(\tau(\widetilde{\theta}))$ by $\pi(\widetilde{\theta})$. 
The global $L$-packet defined by $\pi(\widetilde{\theta})$ depends only on the restriction of $\widetilde{\theta}$ to the subgroup of $\A^*_L$ of norm 1 elements. We denote this restriction by $\theta$ and the associated $L$-packet by $\Pi(\theta)$.

\begin{rem} 
Write $L=F(\sqrt{d})$ for some $d \in F^* \backslash (F^*)^2$. The embedding 
$$ L \hookrightarrow M_2(F) , (a+ \sqrt{d}b) \mapsto  \begin{pmatrix} a&bd\\ b&a \end{pmatrix}$$
identifies $L^*$ with the $F$-points of a maximal torus $\widetilde{T}$ of $\GL_2/F$ and the subgroup $L^1$ of norm one elements with $T(F):= \widetilde{T}(F) \cap \SL_2(F)$, which are the $F$-points of an elliptic endoscopic group $T$ of $G$. With the exception of $\SL_2$, all elliptic endoscopic groups are of this form.
The above shows that under certain conditions a character~$\theta$ of $T(\A)$, which is trivial on $T(F)$, gives rise to a global $L$-packet $\Pi(\theta)$ of $G(\A)$.
\label{endgps}
\end{rem}

\begin{defn}
We say that an admissible irreducible representation $\pi$ of $G(\A)$ is endoscopic if there exist $T$ and $\theta$ as above such that $\pi \in \Pi(\theta)$. We refer to the $L$-packet $\Pi(\theta)$ as an endoscopic packet and call $L$-packets, which are not of this form, stable $L$-packets.
\end{defn}

\subsection{Multiplicity formulas}

In \cite{LL} Labesse and Langlands determine which elements of a global $L$-packet $\Pi(\widetilde{\pi})$ are automorphic, i.e.\ they derive formulas for the multiplicities $m(\pi)$ for $\pi \in \Pi(\widetilde{\pi})$ in the discrete automorphic spectrum. We collect the results that we need below.

For a maximal non-split torus $\widetilde{T}/F$ in $\GL_2$ and a character $\widetilde{\theta}$ of $\widetilde{T}(\A)$ we let 
$\overline{\widetilde{\theta}}$ be the conjugate character of $\widetilde{T}(\A)$, i.e.\ the character defined by
$$\overline{\widetilde{\theta}}(\gamma) = \widetilde{\theta}(w\gamma w^{-1}) = \widetilde{\theta}(\overline{\gamma}), $$
for all $\gamma \in \widetilde{T}(\A)$. Here $w$ is any element in the normalizer of $\widetilde{T}(F)$ which is not in $\widetilde{T}(F)$, e.g.\ using the notation of Remark \ref{endgps} we may take $w = \left(_1 \ ^{-d} \right)$.
For a character $\theta$ of $T(\A)$ we define $\overline{\theta}$ in the same way.
\begin{defn}[cf.\ \cite{LL} p.762--764]
Let $\widetilde{T}$ and $T$ be as above and $\theta$ a character of $T(F)\backslash T(\A)$. We say that 
\begin{itemize}
	\item $\theta$ is of type $(a)$ if $\theta \neq \overline{\theta}$, 
	\item $\theta$ is of type $(b)$ if $\theta = \overline{\theta}$, but $\theta $ does not extend to a character $\widetilde{\theta}$ of $\widetilde{T}(F) \backslash \widetilde{T}(\A) $ satisfying  $\widetilde{\theta}(\gamma) = \widetilde{\theta}(\overline{\gamma})$ for all $\gamma \in \widetilde{T}(\A)$ and
	\item $\theta$ is of type $(c)$ if $\theta$ can be extended to a character $\widetilde{\theta}$ of $\widetilde{T}(F) \backslash \widetilde{T}(\A)$ satisfying $\widetilde{\theta}(\gamma) = \widetilde{\theta}(\overline{\gamma})$ for all $\gamma \in \widetilde{T}(\A)$.
\end{itemize}
\end{defn}

\begin{lem}
Assume $\Pi(\theta)$ is the $L$-packet associated to an automorphic representation $\pi(\widetilde{\theta})$ of $\widetilde{G}(\A)$. Then $\theta$ is necessarily of type $(a)$.
\label{typea}
\end{lem}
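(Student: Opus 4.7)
The plan is to argue by contradiction: suppose $\theta$ is of type $(b)$ or $(c)$, and in each case derive a contradiction with the cuspidality of $\tau(\widetilde\theta)$, which is a prerequisite for $\pi(\widetilde\theta)$ to be defined. The underlying tool is Hilbert~90 for the cyclic extension $L/F$, namely $\widehat{H}^{-1}(\Gal(L/F), \A^*_L/L^*)=0$, which translates to: a Gr{\"o}{\ss}encharacter of $L$ is Galois-invariant if and only if it factors through the norm $N=N^L_F$. Equivalently, a character of $\A^*_L/L^*$ is trivial on $L^1(\A)=\ker N$ if and only if it factors through $N$.

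Type $(c)$ is dispatched directly. By assumption $\theta$ extends to a Galois-invariant character $\widetilde\theta_0$ of $\widetilde{T}(F)\backslash\widetilde{T}(\A)$. Any two extensions of $\theta$ differ by a character trivial on $L^1(\A)$, which by the Hilbert~90 observation (combined with the Hasse norm theorem to extend from $N(\A^*_L)$ to $\A^*_F$) is of the form $\chi\circ N$ for a Gr{\"o}{\ss}encharacter $\chi$ of $F$. Therefore the given $\widetilde\theta = \widetilde\theta_0\cdot(\chi\circ N)$ is a product of two characters factoring through $N$, hence factors through $N$ itself; then $\tau(\widetilde\theta)$ is a principal series, not cuspidal, which is the required contradiction.

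For type $(b)$ the situation is subtler. One still has $\theta^2=1$ (because $\overline\gamma=\gamma^{-1}$ for $\gamma\in L^1(\A)$), so $\widetilde\theta^2$ is trivial on $L^1(\A)$ and thus factors through $N$. Consequently $\xi:=\widetilde\theta^\sigma/\widetilde\theta$ is a quadratic character of $\A^*_L/L^*$ that is trivial on $L^1(\A)$, hence of the form $\eta\circ N$ for a Gr{\"o}{\ss}encharacter $\eta$ of $F$; since no extension of $\theta$ is Galois-invariant, $\xi\neq 1$. Using $\tau(\widetilde\theta)=\tau(\widetilde\theta^\sigma)$ and the identity $\tau(\widetilde\theta\cdot(\eta\circ N))\cong \tau(\widetilde\theta)\otimes \eta$ for automorphic induction, we obtain $\tau(\widetilde\theta)\cong \tau(\widetilde\theta)\otimes \eta$, so $\tau(\widetilde\theta)$ is a biquadratic dihedral representation, induced from both $L$ and the quadratic extension $L'$ of $F$ cut out by $\eta$. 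The resulting $L$-packet of $\pi(\widetilde\theta)|_{G(\A)}$ is then of size $4$ rather than size $2$, which is incompatible with $\Pi(\theta)$ being the endoscopic packet attached to the single pair $(T,\theta)$, such packets having size~$2$ in the Labesse--Langlands classification. This contradiction rules out type $(b)$.

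The main obstacle is type $(b)$: the Hilbert~90 argument that kills type $(c)$ is not enough, since $\widetilde\theta$ itself need not be Galois-invariant; one has to invoke the finer structure of $L$-packets on $\SL_2$ and in particular the fact that biquadratic (tetrahedral) dihedral representations give $L$-packets of size $4$ rather than the size $2$ packets parameterized by a single non-split torus $T\subset G$. Type $(c)$, by contrast, is a straightforward consequence of Hilbert~90 alone.
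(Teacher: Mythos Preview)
Your treatment of type $(c)$ is fine and reaches the same conclusion as the paper: any extension $\widetilde\theta$ of $\theta$ factors through the norm, so $\tau(\widetilde\theta)$ is non-cuspidal and $\pi(\widetilde\theta)$ cannot exist.

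Your type $(b)$ argument has a genuine gap. You correctly deduce that $\tau(\widetilde\theta)\cong\tau(\widetilde\theta)\otimes\eta$ for a nontrivial quadratic $\eta\neq\omega_{L/F}$, so that the self-twist group has order $4$ and $\tau(\widetilde\theta)$ is induced from a second quadratic field $L'$. But this is \emph{not} a contradiction: in the Labesse--Langlands classification for $\SL_2$, type $(b)$ packets exist and are precisely those with this enlarged self-twist group. Nothing in the hypothesis ``$\Pi(\theta)$ is attached to $(T,\theta)$'' forces the packet to have structure group of order $2$; the same packet is simultaneously $\Pi(\theta')$ for the other torus $T'$. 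Your argument, if it worked, would rule out type $(b)$ for $\SL_2$ itself, which is false.

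The paper's proof uses the one hypothesis you never invoke: that $B$ is \emph{definite}. At a real place $v\in S_B$, the obstruction character $\psi:\gamma\mapsto\theta(\gamma/\overline\gamma)$ is a global quadratic character, and its local component $\psi_v$ is a quadratic character of $\C^*$, hence trivial. Writing $\widetilde\theta_v=\psi_v\cdot\chi$ with $\chi$ Galois-invariant, one gets that $\widetilde\theta_v$ itself factors through the norm, so $\tau(\widetilde\theta)_v$ is a principal series of $\GL_2(\R)$. But $\pi(\widetilde\theta)$ exists only if $\tau(\widetilde\theta)_v=JL(\pi(\widetilde\theta)_v)$ is discrete series at every $v\in S_B$, and all archimedean places lie in $S_B$. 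That is the contradiction. The definiteness of $B$ is essential; type $(b)$ cannot be excluded by packet-size considerations alone.
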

\begin{proof}
The representation of $\GL_2(\A)$ associated to a character $(c)$ is a global principal series representation, in particular not a discrete automorphic representation. 

Assume that $\theta$ is of type $(b)$. By assumption $S_B$ contains all archimedean places. Let $v$ be an infinite place.
Let $\widetilde{\theta}$ be an extension of $\theta$ to $\widetilde{T}(\A)$. Let $\psi$ be the character $\gamma \mapsto \widetilde{\theta}(\gamma / \overline{\gamma}) = \theta(\gamma/\overline{\gamma})$ of $\widetilde{T}(\A)$. We view the local component $\widetilde{\theta}_v$ as a character of $\C^*$, the local Weil group of $\C$. 
We can write $ \widetilde{\theta}_v = \psi_v \cdot \chi$ where $\chi$ satisfies $\chi(x) = \chi(\overline{x})$ for all $x \in \C^*$. 
But $\psi$ is a quadratic character, so the local component $\psi_v$ is trivial. In particular $\widetilde{\theta}_v$ factors through the norm and so the associated representation $\tau(\widetilde{\theta}_v)$ of $\GL_2(\R)$ cannot be a discrete series representation. But we assumed that $\widetilde{\theta}$ gives rise to an automorphic representation $\pi(\widetilde{\theta})$ on $\widetilde{G}(\A)$ which implies that $\tau(\widetilde{\theta}_v) \cong JL(\pi(\widetilde{\theta}_v))$ is a discrete series representation -- a contradiction. Therefore $\theta$ is of type~$(a)$. 
\end{proof}

For an irreducible admissible representation $\pi$ of $G(\A)$, let $m(\pi)$ be the multiplicity with which $\pi$ shows up in the discrete automorphic spectrum. For any element~$\pi$ of an endoscopic packet Labesse and Langlands define integers $\left\langle 1, \pi \right\rangle $ and $\left\langle \epsilon, \pi \right\rangle$.  
We refer to Section $6$ and $7$ of \cite{LL} for details regarding the definition. We remark that they are defined as products $ \left\langle 1, \pi \right\rangle := \prod_v{\left\langle 1, \pi_v \right\rangle } $ and $ \left\langle \epsilon, \pi \right\rangle := \prod_v{\left\langle \epsilon, \pi_v \right\rangle},$
where $\left\langle 1, \pi_v \right\rangle \in \{1,2\}$ and $\left\langle \epsilon, \pi_v \right\rangle \in \{-1,0,1\}$ and both are equal to 1 for almost all places.

\begin{thm}[\cite{LL}]
Let $\pi$ be an irreducible admissible representation of $G (\A)$ and assume $\pi \in \Pi(\widetilde{\pi})$ for some automorphic representation $\widetilde{\pi}$ of $\widetilde{G}(\A)$. 

\begin{enumerate}
	\item If $\pi$ is endoscopic, then 
	$$ m(\pi) = \frac{1}{2}( \left\langle 1,\pi \right\rangle + \left\langle \epsilon, \pi \right\rangle).$$
	\item If $\pi$ is not endoscopic, then $m(\pi) = m(^g\pi) \neq 0$ for all $g \in \widetilde{G}(\A)$. 	
\end{enumerate}
\label{multiplicities}
\end{thm}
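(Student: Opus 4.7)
The plan is to follow the strategy of Labesse–Langlands: compare the invariant trace formulas for $\widetilde{G}$, $G$, and the elliptic endoscopic tori $T$ described in Remark \ref{endgps}. The restriction relation (\ref{locpac}) already ties the automorphic spectra of $\widetilde{G}(\A)$ and $G(\A)$ at the level of $L$-packet characters, but because stable conjugacy strictly refines conjugacy in $G$, it does not pin down the multiplicity of a single $\pi\in\Pi(\widetilde{\pi})$; that defect is exactly what the endoscopic tori measure.

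First, I would fix matching test functions: given $f=\prod_v f_v$ on $G(\A)$, choose $\widetilde{f}$ on $\widetilde{G}(\A)$ and, for each pair $(T,\theta)$ of an elliptic endoscopic torus and an admissible character, a function $f^T$ on $T(\A)$, such that stable orbital integrals of $\widetilde{f}$ match those of $f$ and $\kappa$-orbital integrals of $f$ transfer to ordinary orbital integrals of $f^T$, where $\kappa$ is the nontrivial character attached to $T$. The direct construction of these transfers in this rank-one setting is carried out in \cite{LL}. Stabilising the trace formula for $G$ and comparing it with that of $\widetilde{G}$ then produces an identity
\[
\sum_{\pi} m(\pi)\,\tr\pi(f) \;=\; \sum_{\widetilde{\pi}} m(\widetilde{\pi})\,\tr\widetilde{\pi}(\widetilde{f}) \;+\; \sum_{(T,\theta)} c(T,\theta)\,\tr\theta(f^T),
\]
with explicit constants $c(T,\theta)$, in which $\theta$ runs over characters of the types that actually contribute.

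Next, by linear independence of characters one arranges $f$ so that only a single global $L$-packet $\Pi(\widetilde{\pi})$ contributes on the left. In case $(2)$, when the packet is stable, Lemma \ref{typea} forces all endoscopic terms on the right to vanish, so combining the identity with the local character relation $\tr\widetilde{\pi}_v(\widetilde{f}_v)=\sum_{\pi_v\in\Pi(\widetilde{\pi}_v)}\tr\pi_v(f_v)$ yields $m(\pi)=m(\widetilde{\pi})\neq 0$ for every $\pi\in\Pi(\widetilde{\pi})$; conjugating by $g\in\widetilde{G}(\A)$ merely permutes the packet, so $m(\pi)=m({}^g\pi)$. In case $(1)$ the endoscopic contribution is non-trivial: one uses the explicit local character identities between members of $\Pi(\theta_v)$ and the character $\theta_v$ of $T(F_v)$, assembles the signs into the global pairings $\langle 1,\pi\rangle=\prod_v\langle 1,\pi_v\rangle$ and $\langle \epsilon,\pi\rangle=\prod_v\langle \epsilon,\pi_v\rangle$, and inverts a finite Fourier transform on the packet to solve for the individual multiplicities, which come out to $m(\pi)=\tfrac{1}{2}(\langle 1,\pi\rangle+\langle\epsilon,\pi\rangle)$.

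The main obstacle throughout is the matching of orbital integrals at all places (archimedean, ramified, and unramified alike) and the correct computation of the global constants $c(T,\theta)$; a secondary subtlety is that $\langle\epsilon,\pi_v\rangle$ can vanish, which is precisely the mechanism producing the "missing" representations with $m(\pi)=0$ in an endoscopic packet, and tracking this sign cancellation globally is what forces the factor $\tfrac{1}{2}$ in the final formula.
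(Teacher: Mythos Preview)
Your sketch of the Labesse--Langlands stabilisation argument is broadly correct, but note that the paper does not reprove this result at all: its proof consists of two sentences citing Propositions~7.2 and~7.3 of \cite{LL} directly, together with two reductions specific to the definite quaternion setting. So your write-up goes far beyond what the paper actually does.

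More importantly, you have misplaced the role of Lemma~\ref{typea}. You invoke it in case~(2) to make the endoscopic terms vanish when the packet is stable; but that vanishing is essentially the definition of a stable packet and needs no lemma. In the paper, Lemma~\ref{typea} is used for case~(1): Labesse and Langlands give separate analyses for characters $\theta$ of types~(a), (b), (c), and the multiplicity formula in Proposition~7.2 of \cite{LL} is stated for type~(a). Lemma~\ref{typea} shows that in our situation (definite quaternion algebra, so $S_B$ contains all archimedean places) no endoscopic packet of type~(b) can arise, which is what allows the paper to quote the clean formula $m(\pi)=\tfrac{1}{2}(\langle 1,\pi\rangle+\langle\epsilon,\pi\rangle)$ without further case distinctions. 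The paper also records that the quantity $d(\pi)$ appearing in \cite{LL} equals~$1$ here, a simplification your sketch does not mention.
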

\begin{proof}
By Lemma \ref{typea} there are no endoscopic $L$-packets of type $(b)$. 
The above results are then contained in Proposition 7.2 and 7.3 of \cite{LL}  (the number $d(\pi)$ occurring there is equal to $1$ in our case).
\end{proof}

\begin{prop}
Assume the quaternion algebra $B$ ramifies at a finite place.
Fix a finite place $w \in S_B$. Let $\Pi(\widetilde{\pi})$ be a global $L$-packet and let $\pi= \otimes \pi_v \in \Pi(\widetilde{\pi})$ be an element. Then there exists $\tau \in \Pi(\widetilde{\pi}_{w})$ such that 
$$ m\left(\tau \otimes \bigotimes_{v \neq w} \pi_v \right) > 0.$$
\label{switch}
\end{prop}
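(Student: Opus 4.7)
The plan is to proceed by case analysis according to whether $\pi$ is endoscopic, invoking Theorem~\ref{multiplicities} in each case.

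The non-endoscopic case is immediate: Theorem~\ref{multiplicities}(2) asserts that every element of the global $L$-packet already has strictly positive multiplicity, so taking $\tau = \pi_w$ works.

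For the endoscopic case, write $\Pi(\widetilde{\pi}) = \Pi(\theta)$ for $\theta$ of type $(a)$ (using Lemma~\ref{typea}), and apply the multiplicity formula of Theorem~\ref{multiplicities}(1). Rather than pinpoint a good $\tau$ directly, I would prove positivity of the summed multiplicity
\[
M := \sum_{\tau \in \Pi(\widetilde{\pi}_w)} m\bigl(\tau \otimes \pi^w\bigr), \qquad \pi^w := \bigotimes_{v \neq w}\pi_v,
\]
which suffices since at least one summand must then be positive. Using the factorizations $\langle s, \pi'\rangle = \prod_v \langle s, \pi'_v\rangle$ for $s \in \{1, \epsilon\}$, and writing $\langle s, \pi^w\rangle := \prod_{v \neq w}\langle s, \pi_v\rangle$, this unfolds to
\[
M = \frac{1}{2}\Bigl(\langle 1, \pi^w\rangle \cdot \Sigma_1 \;+\; \langle \epsilon, \pi^w\rangle \cdot \Sigma_\epsilon\Bigr), \qquad \Sigma_s := \sum_{\tau \in \Pi(\widetilde{\pi}_w)}\langle s, \tau\rangle.
\]

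The positivity of $M$ is then established by a case analysis on the size of the local $L$-packet $\Pi(\widetilde{\pi}_w)$. If $|\Pi(\widetilde{\pi}_w)| = 2$, the two local members are separated by $\epsilon_w$: each contributes $\pm 1$ to $\langle \epsilon_w, \cdot\rangle$ while sharing the same value of $\langle 1, \cdot\rangle$, so $\Sigma_\epsilon = 0$ and $\Sigma_1 > 0$, yielding $M > 0$. If $|\Pi(\widetilde{\pi}_w)| = 1$, then $\tau = \pi_w$ is forced and we must show $m(\pi) > 0$ directly; the key local fact here is that at a finite ramified place $w \in S_B$ with singleton local packet the endoscopic pairing degenerates to $\langle \epsilon_w, \pi_w\rangle = 0$, whence $\langle \epsilon, \pi\rangle = 0$ and $m(\pi) = \frac{1}{2}\langle 1, \pi\rangle > 0$.

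The main obstacle is the verification of the local pairing assertions at $w \in S_B$, in particular the vanishing $\langle \epsilon_w, \pi_w\rangle = 0$ in the singleton case and the opposite-sign behaviour of $\langle \epsilon_w, \cdot\rangle$ on a size-two packet; both facts require unpacking the definitions of the local pairings $\langle s, \pi_v\rangle$ from Sections~6 and~7 of~\cite{LL}, combined with the structure of representations of $B_w^\times$ and their restrictions to $G(F_w)$ as summarized on p.~751 and Lemma~7.1 of \emph{loc.\ cit}.
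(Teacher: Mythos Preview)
Your argument is correct and rests on the same local input from \cite{LL} (Lemma~7.1 and the discussion of the pairings at finite ramified places) as the paper's proof. The paper organises the case split slightly differently---assuming $m(\pi)=0$, deducing $\langle\epsilon,\pi_w\rangle\neq 0$ from the multiplicity formula, and then invoking Lemma~7.1 of \cite{LL} to conclude that the local packet at $w$ has size two with $\langle\epsilon,\cdot\rangle$ taking both values $\pm 1$---but your singleton-case claim $\langle\epsilon_w,\pi_w\rangle=0$ is precisely the contrapositive of that same citation, so the two arguments are essentially equivalent.
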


\begin{proof}
If $m(\pi)>0$ there is nothing to show. We may therefore assume $m(\pi)=0$. But then $\pi$ is endoscopic, so $\Pi(\widetilde{\pi})=\Pi(\theta)$ as above and in particular $\left\langle \epsilon, \pi_w \right\rangle \neq 0$. But then Lemma 7.1 of \cite{LL} and the discussion on p.783 of loc.cit.\ implies that the $L$-packet $\Pi(\widetilde{\pi}_w) = \{\pi_w,\pi'_w\}$ is of size two, 
$$\left\langle \epsilon, \pi_w \right\rangle , \left\langle \epsilon, \pi'_w \right\rangle \in \{-1,1\}$$
and the numbers are distinct. The result follows from the formula in Theorem \ref{multiplicities}.
\end{proof}

\subsection{Compatible tame levels}
Let $v$ be a finite prime of $F$.
\begin{defn} 
\begin{enumerate}
\item We say that two idempotents $\widetilde{e} \in C^\infty_c(\widetilde{G}(F_v), \Qbar)$ and $e \in C^\infty_c(G(F_v),\Qbar)$ are locally Langlands compatible if they satisfy: For any irreducible smooth representation $\widetilde{\pi}$ of $\widetilde{G}(F_v)$, such that $\iota_\infty(\widetilde{e}) \cdot \widetilde{\pi} \neq 0$, there exists an element $\pi \in \Pi(\widetilde{\pi})$, such that $\iota_\infty(e) \cdot \pi \neq 0$.
	\item We say that $\widetilde{e}$ is strongly locally Langlands compatible with $e$ if: For any smooth representation $\widetilde{\pi}$ of $\widetilde{G}(F_v)$, such that $\iota_\infty(\widetilde{e}) \cdot \widetilde{\pi} \neq 0$, every element $\pi$ of $\Pi(\widetilde{\pi})$ has the property that $\iota_\infty(e) \cdot \pi \neq 0$.
	\item We call two compact open subgroups $\widetilde{K} \subset \widetilde{G}(F_v)$ and $K \subset G(F_v)$ (strongly) locally Langlands compatible if the idempotents $e_{\widetilde{K}}$ and $e_K$ are. 
\end{enumerate}
\end{defn}

\begin{defn} \label{GLCdef}
Two idempotents $\widetilde{e} \in C^\infty_c(\widetilde{G}(\A^p_f), \Qbar)$ and $e \in C^\infty_c(G(\A^p_f),\Qbar)$ are called globally Langlands compatible if the following holds: For any discrete automorphic representation $\widetilde{\pi}$ of $\widetilde{G}(\A)$, such that $\iota_\infty(\widetilde{e}) \cdot \widetilde{\pi}^p_f \neq 0$ and any $\tau \in \Pi(\widetilde{\pi}_p)$, there exists an element $\pi$ in the packet $\Pi(\widetilde{\pi})$, such that 
\begin{itemize}
	\item $m(\pi) > 0$,
  \item  $\iota_\infty(e)\cdot \pi^p_f \neq 0$ and
	\item  $\pi_p = \tau$.
\end{itemize}
We call two tame levels $\widetilde{K} \subset \widetilde{G}(\A^p_f)$ and $K \subset G(\A^p_f)$ globally Langlands compatible if the idempotents $e_{\widetilde{K}}$ and $e_K$ are.
\end{defn}

\begin{prop}
Let $\{g_i: i \in I \}$ be a set of representatives for $\widetilde{G}(F_v)/G(F_v)F_v^*$ and $\widetilde{K}\subset \widetilde{G}(F_v)$ be a compact open subgroup. Define the compact open subgroup 
$$K:=\bigcap_{i\in I} g_i^{-1} (\widetilde{K}\cap G(F_v)) g_i$$ 
of $G(F_v)$. Then $\widetilde{K}$ and $K$ are strongly locally Langlands compatible. 
\label{localcompact}
\end{prop}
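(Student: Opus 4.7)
The plan is to show that every $\pi \in \Pi(\widetilde{\pi})$ admits a nonzero $K$-fixed vector whenever $\widetilde{\pi}^{\widetilde{K}} \neq 0$. The key observation is that since $K \subset g_i^{-1}(\widetilde{K} \cap G(F_v))g_i$ for every $i$, we have the inclusions
\[
\pi^K \supseteq \pi^{g_i^{-1}(\widetilde{K}\cap G(F_v)) g_i} = (^{g_i}\pi)^{\widetilde{K}\cap G(F_v)}
\]
for each $i$. Thus it suffices to exhibit, for each $\pi \in \Pi(\widetilde{\pi})$, some index $i$ with $(^{g_i}\pi)^{\widetilde{K}\cap G(F_v)} \neq 0$.

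First I would produce at least one member of the packet with a nonzero $\widetilde{K}\cap G(F_v)$-fixed vector. Pick a nonzero $v \in \widetilde{\pi}^{\widetilde{K}}$ and decompose the restriction
\[
\widetilde{\pi}|_{G(F_v)} \cong \bigoplus_{\pi \in \Pi(\widetilde{\pi})} c\, \pi,
\]
writing $v = \sum_\pi v_\pi$ according to isotypic components for the $G(F_v)$-action. This decomposition is $G(F_v)$-equivariant and $\widetilde{K}\cap G(F_v) \subset G(F_v)$, so each $v_\pi$ is again $\widetilde{K}\cap G(F_v)$-fixed. Since $v \neq 0$, at least one $v_{\pi_0}$ is nonzero, so $\pi_0^{\widetilde{K}\cap G(F_v)} \neq 0$ for this particular $\pi_0 \in \Pi(\widetilde{\pi})$.

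Next I would upgrade this to every element of the packet using the transitive action of $\widetilde{G}(F_v)/G(F_v) F_v^*$ on $\Pi(\widetilde{\pi})$. Transitivity is standard: if $W \subset \widetilde{\pi}|_{G(F_v)}$ is an irreducible $G(F_v)$-subrepresentation, then $\widetilde{G}(F_v) \cdot W$ is a nonzero $\widetilde{G}(F_v)$-subrepresentation, hence equals $\widetilde{\pi}$ by irreducibility, and $G(F_v) F_v^*$ acts on $W$ without changing its isomorphism class (by $G(F_v)$-equivariance and the central character). Hence for every $\pi \in \Pi(\widetilde{\pi})$ there exists $i \in I$ with $^{g_i}\pi \cong \pi_0$, and the chain
\[
\pi^K \supseteq \pi^{g_i^{-1}(\widetilde{K}\cap G(F_v)) g_i} = (^{g_i}\pi)^{\widetilde{K}\cap G(F_v)} \cong \pi_0^{\widetilde{K}\cap G(F_v)} \neq 0
\]
concludes the proof.

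No step looks genuinely difficult; the only point requiring care is keeping the direction of the inclusions straight (invariants under a smaller group form a larger subspace), and confirming the transitivity of the $\widetilde{G}(F_v)/G(F_v) F_v^*$-action on the packet, both of which are routine. The elegant feature of the chosen $K$ is precisely that it is small enough to be contained in every $g_i$-conjugate of $\widetilde{K}\cap G(F_v)$, which is exactly what allows the fixed vector on a single ``good'' member of the packet to spread to all members.
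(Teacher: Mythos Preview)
Your proof is correct and follows essentially the same route as the paper's: find one member $\pi_0$ of the packet with a nonzero $\widetilde{K}\cap G(F_v)$-fixed vector, then use the transitive action of $\widetilde{G}(F_v)/G(F_v)F_v^*$ on the packet to deduce that every other member has a fixed vector under some conjugate $g_i^{-1}(\widetilde{K}\cap G(F_v))g_i$, hence under $K$. You are simply more explicit than the paper about the inclusions of invariants and about why the action on the packet is transitive.
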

\begin{proof} Let $\widetilde{\pi}$ be an irreducible admissible representation of $\widetilde{G}(F_v)$ which has a $\widetilde{K}$-fixed vector. Assume $\widetilde{\pi}|_{G(F_v)}$ decomposes as $\bigoplus_{i=1,..n} c \pi_i$. As $\widetilde{\pi}^{\widetilde{K}\cap G(F_v)}\neq 0$ there exists $\pi_i$ in the restriction, which has a non-zero $\widetilde{K}\cap G(F_v)$ fixed vector. We may assume without loss of generality that $\pi_i=\pi_1$. Now for any $\pi_k$ occurring in the restriction there exists $g_j \in \{g_i: i \in I\}$ such that $\pi_k \cong {^{g_j}\pi_1}$. Therefore $\pi_k$ has a fixed vector under $g_j^{-1} (\widetilde{K}\cap G(F_v)) g_j$.
\end{proof}

\begin{prop}
\begin{enumerate}
	\item Given an idempotent $\widetilde{e} \in C^\infty_c(\widetilde{G}(F_v),\Qbar)$, there exists an idempotent $e \in C^\infty_c(\widetilde{G}(F_v),\Qbar)$, which is strongly locally Langlands compatible with $\widetilde{e}$. 
	\item If $\widetilde{e}$ is a special idempotent associated with a supercuspidal Bernstein component (as in Section 3 of \cite{bushnell}), then there exists a special idempotent $e$ associated to a finite set of Bernstein components, which is strongly locally Langlands compatible with $\widetilde{e}$.
\end{enumerate}
\label{special}
\end{prop}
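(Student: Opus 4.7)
The plan is to handle the two parts separately, both reducing to structural properties of $L$-packets and Bernstein theory. For part (1), I would reduce directly to Proposition \ref{localcompact}: any idempotent $\widetilde{e}\in C_c^\infty(\widetilde{G}(F_v),\Qbar)$ is bi-invariant under some compact open subgroup $\widetilde{K}\subset \widetilde{G}(F_v)$, so $\iota_\infty(\widetilde{e})\cdot\widetilde{\pi}\neq 0$ forces $\widetilde{\pi}^{\widetilde{K}}\neq 0$. Choosing a compact open $K\subset G(F_v)$ strongly locally Langlands compatible with $\widetilde{K}$ via Proposition \ref{localcompact} and setting $e:=e_K$ finishes this part, since then for every $\pi$ in $\Pi(\widetilde{\pi})$ one has $\pi^K\neq 0$, i.e.\ $\iota_\infty(e)\cdot\pi\neq 0$.

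For part (2), the starting point is the structural description of a supercuspidal Bernstein component: $\Omega$ is the orbit of a fixed supercuspidal $\widetilde{\sigma}$ of $\widetilde{G}(F_v)$ under twisting by unramified characters of the form $\chi\circ\mathrm{Nrd}$. Every such character is trivial on the norm-one subgroup $G(F_v)$, so the restriction $\widetilde{\pi}|_{G(F_v)}$ is independent of the choice of $\widetilde{\pi}\in\Omega$, and consequently the $L$-packet $\Pi(\widetilde{\pi})$ equals $\Pi(\widetilde{\sigma})$ for every $\widetilde{\pi}\in\Omega$. Write $\Pi(\widetilde{\sigma})=\{\pi_1,\dots,\pi_r\}$; since supercuspidality is preserved under restriction to a subgroup of finite index modulo the centre, each $\pi_i$ is supercuspidal on $G(F_v)$ and hence lies in a single supercuspidal Bernstein component $\Omega_i$ of $G(F_v)$.

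Now I would define $e:=\sum_{i}e_{\Omega_i}$, where the sum runs over the (finitely many, distinct) components $\Omega_i$; because distinct Bernstein components have orthogonal special idempotents, this is itself a special idempotent, associated to the finite set $\{\Omega_1,\dots,\Omega_r\}$. Strong local Langlands compatibility is then immediate: any $\widetilde{\pi}\in\Omega$ satisfies $\Pi(\widetilde{\pi})=\{\pi_1,\dots,\pi_r\}$, and each $\pi_i$ has $\iota_\infty(e_{\Omega_i})\cdot\pi_i\neq 0$ by the defining property of the Bernstein idempotent. At places $v\in S_B$ the group $G(F_v)$ is compact and every irreducible admissible representation belongs to a cuspidal Bernstein component, so the same argument applies verbatim.

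The main obstacle I anticipate is verifying the twist-invariance of the local $L$-packet under the unramified character group of $\widetilde{G}(F_v)$; this is the key point ensuring that the collection $\{\Omega_1,\dots,\Omega_r\}$ is finite rather than infinite, and it is exactly what uses the supercuspidality hypothesis on $\widetilde{e}$. Once this is in place, the construction is forced and the rest is bookkeeping.
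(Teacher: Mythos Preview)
Your proposal is correct and follows essentially the same route as the paper: part (1) reduces to Proposition \ref{localcompact} via a compact open $\widetilde{K}$ under which $\widetilde{e}$ is bi-invariant, and part (2) uses that unramified twists of $\widetilde{G}(F_v)$ are trivial on $G(F_v)$ so that a supercuspidal Bernstein component determines a single $L$-packet, whose members then lie in finitely many Bernstein components of $G(F_v)$. Your write-up is somewhat more detailed than the paper's (e.g.\ you make explicit that the $\pi_i$ are supercuspidal and that the associated idempotents are orthogonal), but the underlying argument is identical; one small clarification: the twist-invariance of the packet is automatic for any $\widetilde{\pi}$, and the role of supercuspidality is rather to ensure that the Bernstein component consists \emph{only} of unramified twists of a single representation.
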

\begin{proof}
The first part follows from the previous proposition and the fact that there exists $\widetilde{K}\subset \widetilde{G}(F_v)$ such that $e_{\widetilde{K}} \cdot \widetilde{e} = \widetilde{e}$. 
For the second part note that a supercuspidal Bernstein component for $\widetilde{G}(F_v)$ in fact determines an $L$-packet $\Pi$. Indeed for any unramified character $\chi$ the $L$-packet of $\widetilde{\pi}\otimes \chi$ is the same as the $L$-packet of $\widetilde{\pi}$. We let $\Sigma_v$ be the set of the finitely many Bernstein components which occur in $\Pi$. Then $e:=e(\Sigma_v)$, the special idempotent attached to $\Sigma_v$, does the job. 
\end{proof}

\begin{prop} Assume the quaternion algebra $B$ ramifies at a finite place.
Let $\widetilde{K}=\prod_v {\widetilde{K}_v}$ be a compact open subgroup of $\widetilde{G}(\A^p_f)$. Then there exists a compact open subgroup  $K \subset G(\A^p_f)$, which is globally Langlands compatible with $\widetilde{K}$.
\label{gLC}
\end{prop}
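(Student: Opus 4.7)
The plan is to build $K$ place by place from the local construction of Proposition \ref{localcompact}, reducing the global compatibility question to controlling the multiplicity $m(\pi)$ via Theorem \ref{multiplicities} together with the switching trick of Proposition \ref{switch}. Concretely, for each finite place $v\neq p$ apply Proposition \ref{localcompact} to the given $\widetilde{K}_v$ to produce a compact open $K_v\subset G(F_v)$ which is \emph{strongly} locally Langlands compatible with $\widetilde{K}_v$. At almost all $v$, $\widetilde{K}_v$ is hyperspecial and the recipe yields $K_v=G(\mathcal{O}_{F_v})$, so $K:=\prod_{v\nmid p\infty}K_v$ is a well-defined compact open subgroup of $G(\A^p_f)$.

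Now fix a discrete automorphic representation $\widetilde{\pi}$ of $\widetilde{G}(\A)$ with $(\widetilde{\pi}^p_f)^{\widetilde{K}}\neq 0$ and an element $\tau\in \Pi(\widetilde{\pi}_p)$. Since for almost all $v$ the local packet $\Pi(\widetilde{\pi}_v)$ reduces to $\{\pi^0_v\}$, the set $\Pi(\widetilde{\pi})$ is non-empty and one can choose an element $\pi^{(0)}=\bigotimes_v\pi^{(0)}_v\in\Pi(\widetilde{\pi})$ with $\pi^{(0)}_p=\tau$. Strong local Langlands compatibility at each $v\neq p$ immediately implies that $(\pi^{(0),p}_f)^K\neq 0$, regardless of which element of the local packet we picked.

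The remaining -- and only real -- obstacle is to arrange $m(\pi)>0$ without disturbing the prescribed component at $p$. If $\pi^{(0)}$ is non-endoscopic, then Theorem \ref{multiplicities}(2) already gives $m(\pi^{(0)})>0$ and we set $\pi:=\pi^{(0)}$. If $\pi^{(0)}$ is endoscopic, this is where the hypothesis that $B$ ramifies at some finite place $w\in S_B$ is used: since $p\notin S_B$ we have $w\neq p$, and Proposition \ref{switch} applied at $w$ produces $\tau_w\in\Pi(\widetilde{\pi}_w)$ such that
\[
\pi \;:=\; \tau_w\otimes\bigotimes_{v\neq w}\pi^{(0)}_v
\]
satisfies $m(\pi)>0$. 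Because $w\neq p$, this substitution preserves $\pi_p=\tau$; because $\widetilde{K}_w$ and $K_w$ are strongly locally Langlands compatible, $\tau_w$ still admits a $K_w$-fixed vector, hence $(\pi^p_f)^K\neq 0$. All three conditions of Definition \ref{GLCdef} are thus met, proving global Langlands compatibility of $\widetilde{K}$ and $K$. The main obstacle is precisely the endoscopic case, which is exactly what the assumption on $S_B$ together with Proposition \ref{switch} is designed to resolve.
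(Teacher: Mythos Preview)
Your overall strategy matches the paper's, but there is a genuine error in the construction of $K$. The claim that at a place $v$ with $\widetilde{K}_v=\GL_2(\mathcal{O}_{F_v})$ the recipe of Proposition~\ref{localcompact} returns $K_v=\SL_2(\mathcal{O}_{F_v})$ is false. The quotient $\GL_2(F_v)/\SL_2(F_v)F_v^*\cong F_v^*/(F_v^*)^2$ contains the class of $\operatorname{diag}(\varpi,1)$, which does \emph{not} normalise $\SL_2(\mathcal{O}_{F_v})$ (its normaliser in $\GL_2(F_v)$ is $F_v^*\GL_2(\mathcal{O}_{F_v})$, whose determinants all have even valuation). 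No choice of coset representatives avoids this, and the resulting intersection is an Iwahori-type subgroup, strictly smaller than $\SL_2(\mathcal{O}_{F_v})$. Applying the recipe at every finite $v\neq p$ therefore yields a product $\prod_v K_v$ which fails to be open in $G(\A^p_f)$, so your $K$ is not a tame level.

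The fix is exactly what the paper does: set $K_v:=\SL_2(\mathcal{O}_{F_v})$ by hand for $v\notin S$ and use Proposition~\ref{localcompact} only at the finitely many $v\in S\setminus\{p\}$. The price is that at $v\notin S$ the pair $(\widetilde{K}_v,K_v)$ is only locally, not \emph{strongly} locally, Langlands compatible (an unramified packet of size~$2$ has a unique member with an $\SL_2(\mathcal{O}_{F_v})$-fixed vector). Consequently you cannot choose $\pi^{(0)}$ freely in $\Pi(\widetilde{\pi})$: you must insist $\pi^{(0)}_v=\pi^0_v$ for all $v\notin S$, which is always possible. The switching argument at the finite place $w\in S_B\subset S$ then goes through unchanged, since at $w$ strong compatibility \emph{does} hold. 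With this correction your proof coincides with the paper's.
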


\begin{proof}
Let $S$ be a finite set of places containing $p$ and $S_B$ and such that for $v\notin S$, $\widetilde{K}_v=\GL_2(\mathcal{O}_{F_v})$. Define the compact open subgroup $K:=\prod_v{K_v} \subset G(\A_f^p)$, where $K_v := \SL_2(\mathcal{O}_{F_v})$ for all $v \notin S$ and for all finite places $v\in S \backslash \{p\}, $ $K_v$ is the compact open defined in Proposition \ref{localcompact}. For an automorphic representation $\widetilde{\pi}$ of $\widetilde{G}(\A)$ with $(\widetilde{\pi}^p_f)^{\widetilde{K}} \neq 0$ and an element $\tau \in \Pi(\widetilde{\pi}_p)$ define the set 
\[ Y(\widetilde{\pi}, \tau) := \{ \pi \in \Pi(\widetilde{\pi}) \ | \ (\pi^p_f)^K\neq 0, \pi_p=\tau \}.\]
From Proposition 4.14. it follows that 
\[
 Y(\widetilde{\pi}, \tau)= \{\pi \in \Pi(\widetilde{\pi}) \ | \ \pi_v = \pi_v^0 \ \forall v \notin S , \pi_p = \tau \}.
\]
In order to show that there exists $\pi \in Y(\widetilde{\pi},\tau)$ with $m(\pi) >0$, let $\pi \in Y(\widetilde{\pi}, \tau)$ be arbitrary. If $m(\pi)>0 $ we are done, if not Proposition \ref{switch} implies that by changing the local representation $\pi_v$ at a finite place $v \in S_B$ we can pass to a representation~$\pi'$ which is automorphic and is obviously still in $Y(\widetilde{\pi},\tau)$. 
\end{proof}

\section{$p$-adic Langlands functoriality}
We use the notation of Section \ref{sec: zoo}. In particular, we assume that the base field~$F$ is equal to $\Q$. If $\D$ is an eigenvariety of idempotent type for $G$, i.e., $\D$ is the eigenvariety associated to a datum $(\mathcal{W}, M(e)_\bullet, \alpha_\bullet, \mathcal{H}_S, u)$, define $S(\D):= S$, the finite set of places used to build the Hecke algebra $\mathcal{H}_S$. Similarly define $S(\widetilde{\D})$ for an eigenvariety $\widetilde{\D}$ for $\widetilde{G}$.
\subsection{Definition}
The following definition of a $p$-adic Langlands transfer from $\widetilde{\D}$ to $\D$ is analogous to the one given in Section 3.4 of \cite{pjw} for the $p$-adic endoscopic transfer on unitary groups.
\begin{defn} Let $\widetilde{\D}$ (resp. $\D$) be an eigenvariety of idempotent type for $\widetilde{G}$ (resp. for $G$). Assume that $S(\D)= S(\widetilde{\D})=:S$. A  morphism $\zeta:\widetilde{\D} \rightarrow \D$ is called a $p$-adic Langlands functorial transfer if the diagrams
	
$$\xymatrix{
\widetilde{\D}  \ar[d]^{\widetilde{\omega}} \ar[r]^\zeta &\D \ar[d]^{\omega} \\
\widetilde{\mathcal{W}} \ar[r]^\mu &\mathcal{W} }
	\hspace{1cm}
	\xymatrix{
\mathcal{H}_S \ \ar[d] \ar@{^{(}->}[r]^\lambda & \widetilde{\mathcal{H}}_S \ar[d]\\
  \mathcal{O}(\D) \ar[r]^{\zeta^*}  &\mathcal{O}(\widetilde{\D}) }  $$ 
  commute.	
\label{padictransfer}
\end{defn}
\begin{lem} If $\zeta$ exists it is unique.
\label{uniquetransfer}
\end{lem}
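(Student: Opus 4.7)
The plan is to mimic the uniqueness argument of Proposition \ref{interpolation}, which in turn reduces to Lemma \ref{7.2.7}. Both $\widetilde{\D}$ and $\D$ are reduced and separated rigid spaces, so it suffices to show that any two $p$-adic Langlands transfers $\zeta_1,\zeta_2:\widetilde{\D}\rightarrow\D$ agree on the Zariski-dense set of points $\widetilde{\D}(\Qbar_p)$.

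Fix $x\in\widetilde{\D}(\Qbar_p)$. I want to verify the two hypotheses of Lemma \ref{7.2.7} for the points $\zeta_1(x),\zeta_2(x)\in\D(\Qbar_p)$. For the weight, the left-hand commutative square of Definition \ref{padictransfer} gives
$$\omega(\zeta_i(x))=\mu(\widetilde{\omega}(x))\qquad (i=1,2),$$
so in particular $\omega(\zeta_1(x))=\omega(\zeta_2(x))$. For the Hecke eigensystem, the right-hand commutative square says that for every $h\in\mathcal{H}_S$,
$$\zeta_i^*(\psi_\D(h))=\psi_{\widetilde{\D}}(\lambda(h))\in\mathcal{O}(\widetilde{\D}).$$
Evaluating at $x$ gives $\psi_{\zeta_i(x)}(h)=\psi_x(\lambda(h))$ for $i=1,2$; hence $\psi_{\zeta_1(x)}=\psi_{\zeta_2(x)}$ as characters of $\mathcal{H}_S$. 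Lemma \ref{7.2.7} then yields $\zeta_1(x)=\zeta_2(x)$.

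Having established that $\zeta_1$ and $\zeta_2$ agree set-theoretically on $\widetilde{\D}(\Qbar_p)$, I invoke the standard fact that morphisms of rigid analytic spaces whose source is reduced and whose target is separated are determined by their restriction to the Zariski-dense set of $\Qbar_p$-valued points (this is precisely the input used in the proof of Proposition \ref{interpolation}). Concretely, on any admissible affinoid open $U\subset\widetilde{\D}$ with $\zeta_i(U)$ contained in an affinoid $V\subset\D$, the two induced maps of reduced affinoid algebras $\mathcal{O}(V)\rightrightarrows\mathcal{O}(U)$ agree after composing with evaluation at every maximal ideal, hence agree since $\mathcal{O}(U)$ is Jacobson and reduced. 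Working locally in this way, the maps $\zeta_1$ and $\zeta_2$ coincide, proving uniqueness. There is no substantial obstacle here; the only point requiring care is the standard reduction from equality on $\Qbar_p$-points to equality of morphisms, which is purely formal given the reducedness of $\widetilde{\D}$ and separatedness of $\D$.
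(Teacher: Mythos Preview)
Your proof is correct and follows essentially the same argument as the paper: both use the two commutative diagrams in Definition \ref{padictransfer} to show that for any $x\in\widetilde{\D}(\Qbar_p)$ the pair $(\psi_{\zeta(x)},\omega(\zeta(x)))$ is determined independently of $\zeta$, then invoke Lemma \ref{7.2.7} together with reducedness (and separatedness) to conclude. Your write-up is simply a more detailed unpacking of the same steps.
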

\begin{proof}
As $\widetilde{\D}$ and $\D$ are reduced, $\zeta$ is determined by the induced map on $\Qbar_p$-points. However for $x \in \widetilde{\D}(\Qbar_p)$, $\psi_{\zeta(x)} = \psi_x \circ \lambda$ and $\omega(\zeta(x)) = \mu(\widetilde{\omega}(x))$. By Lemma \ref{7.2.7} the pair $(\psi_{\zeta(x)}, \omega(\zeta(x)))$ determines the point $\zeta(x) \in \D(\Qbar_p)$ uniquely. 
\end{proof}
\begin{rem} The target eigenvariety $\D$ is by no means unique. There can be $p$-adic transfers from $\widetilde{\D}$ to many different eigenvarieties.
\end{rem}

\subsection{Construction of the $p$-adic transfer -- the general case}

As before let $\widetilde{e}= \prod_l \widetilde{e}_l \in C_c^{\infty}(\widetilde{G}(\A_f^p),\Qbar)$ be an idempotent and $S$ a set of bad places. 
Let $\widetilde{K}= \prod_l \widetilde{K}_l \subset \widetilde{G}(\A_f^p)$ be a compact open such that $\widetilde{e}\cdot e_{\widetilde{K}}  =e_{\widetilde{K}} \cdot \widetilde{e}  = \widetilde{e}$ and let $K \subset G(\A_f^p)$ be a tame level which is globally Langlands compatible with $\widetilde{K}$, e.g.\ the one constructed in Proposition \ref{gLC}. Then $\widetilde{e}$ and $e:=e_K$ are also globally Langlands compatible.\footnote{ If for some $l \in S$, $\widetilde{e}_l$ happens to be the idempotent associated to a supercuspidal representation of $\widetilde{G}(\Q_l)$ we may alternatively choose $e_l$ as in Proposition \ref{special}.(2).} Let $\widetilde{\D}=  \D_{u_0}(\widetilde{e})$ be the eigenvariety of idempotent type $\widetilde{e}$ for $\widetilde{G}$ and $\mathcal{D}:=\D_{u_0}(e)$ the eigenvariety of idempotent type $e$ for $G$. We have all the ingredients to prove the existence of a $p$-adic Langlands functorial transfer from $\widetilde{\D}$ to $\D$. In Section \ref{subsec: aux} we constructed two auxiliary eigenvarieties $\D' $ and $\D''$. They fit into the following commutative diagram
$$\xymatrix{
\widetilde{\D} \ar[d]^{\widetilde{\omega}} \ar[r]^{\zeta'} & \D' \ar[d]^{\omega'} &  \D'' \ar[d]^{\omega''} \ar[r] &\D \ar[d]^{\omega} \\
\widetilde{\mathcal{W}} \ar[r]^{\operatorname{id}} & \widetilde{\mathcal{W}} \ar[r]^{\operatorname{id}}  &\widetilde{\mathcal{W}}  \ar[r]^\mu &\mathcal{W} }.$$ 
Recall from Proposition \ref{Z'e} that $\D'$ is an eigenvariety for $\mathcal{Z}'_{\widetilde{e}}$. As before we denote by $Z' $ the Zariski-dense and accumulation set of points in $\D'(\Qbar_p)$ defined by $ \mathcal{Z}'_{\widetilde{e}}$.
We construct a morphism $\xi$ from $\D'$ to $\D''$ by first establishing a map on classical points and then interpolating it.
\begin{prop}
There exists an injection 
$\xi:Z'\hookrightarrow \D''(\Qbar_p)$ such that  
$$(\psi_{\xi(z)}, \omega''(\xi(z))) = (\psi_{z}, \omega'(z)) \ \text{ for all } z \in Z'. $$
\end{prop}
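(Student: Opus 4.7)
The plan is to construct $\xi$ pointwise by returning to the classical picture, invoking the global Langlands compatibility of $(\widetilde{K},K)$ to produce a $p$-refined automorphic representation of $G(\A)$, and then matching Hecke eigenvalues after restriction along $\lambda$. First, by Propositions \ref{D'}(b) and \ref{Z'e}, every $z\in Z'$ is $\zeta'(\widetilde{z})$ for a unique $\widetilde{z}\in Z$ attached to a $p$-refined automorphic representation $(\widetilde{\pi},\widetilde{\chi})$ of $\widetilde{G}(\A)$ of weight $\underline{k}=(k_1,k_2)$; here $\widetilde{\pi}_p=\mathrm{Ind}_{\widetilde{B}}^{\GL_2(\Q_p)}(\chi_1,\chi_2)$ is an irreducible unramified principal series, $\iota_\infty(\widetilde{e})\widetilde{\pi}_f^p\neq 0$, $\omega'(z)=\underline{k}$, and $\psi_z=\psi_{(\widetilde{\pi},\widetilde{\chi})}\circ\lambda$ on $\mathcal{H}_S$.

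Next I would apply Proposition \ref{gLC}. Since $\widetilde{\pi}_p$ is Iwahori-spherical, at least one $\tau\in\Pi(\widetilde{\pi}_p)$ has $\tau^I\neq 0$ and its Iwahori-Jacquet-module supports the character $\widetilde{\chi}\delta_{\widetilde{B}}^{-1/2}|_\Sigma$ of $\Sigma$ (in the generic case $\widetilde{\pi}_p|_{\SL_2(\Q_p)}$ is irreducible and $\tau$ is this restriction; the reducible case is handled by a direct inspection using the Geometric Lemma). Global Langlands compatibility then produces $\pi\in\Pi(\widetilde{\pi})$ with $m(\pi)>0$, $(\pi_f^p)^K\neq 0$ and $\pi_p=\tau$; setting $\chi\delta_B^{-1/2}:=\widetilde{\chi}\delta_{\widetilde{B}}^{-1/2}|_\Sigma$ makes $\chi$ an accessible refinement of $\pi_p$, so $(\pi,\chi)$ is a $p$-refined automorphic representation of $G(\A)$ of weight $k:=k_1-k_2$ with $\iota_\infty(e)\pi_f^p\neq 0$. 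The $G$-analogue of Lemma \ref{clapoint} then attaches to $(\pi,\chi)$ a classical point $y\in \D(\Qbar_p)$ of weight $k$ and Hecke character $\psi_{(\pi,\chi)}$.

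Since $\mu$ sends the character $(z_1,z_2)\mapsto z_1^{k_1}z_2^{k_2}$ to $z\mapsto z^{k_1-k_2}$, we have $\mu(\underline{k})=k=\omega(y)$, so the pair $(\underline{k},y)\in\widetilde{\mathcal{W}}(\Qbar_p)\times \D(\Qbar_p)$ defines a point $\xi(z)\in \D''(\Qbar_p)$ with $\omega''(\xi(z))=\underline{k}=\omega'(z)$ and $\psi_{\xi(z)}=\psi_{(\pi,\chi)}$. The required identity $\psi_{\xi(z)}=\psi_z$ on $\mathcal{H}_S$ then reduces, via the factorization $\lambda=\lambda_p\otimes {\bigotimes}'\lambda_l$, to two local verifications: at each unramified $l\notin S$, the character of $\mathcal{H}_E(\SL_2(\Q_l),\SL_2(\Z_l))$ attached to $\pi_l$ is determined by its Satake parameter, which is the image in $\PGL_2(\Qbar)$ of the Satake parameter of $\widetilde{\pi}_l$ (because $\pi_l\in\Pi(\widetilde{\pi}_l)$), and $\lambda_l$ was constructed from the Satake isomorphisms precisely to realize this map; at $p$, a direct computation on $z=(p^a,p^{-a})\in\Sigma^+$ evaluates both $(\widetilde{\chi}\delta_{\widetilde{B}}^{-1/2}\delta_{\underline{k}})(z)$ and $(\chi\delta_B^{-1/2}\delta_k)(z)$ to $(\chi_1\chi_2^{-1})(p)^a\,p^{a(1-k)}$, so the normalizations cancel. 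Injectivity of $\xi$ is then immediate from Lemma \ref{7.2.7}: if $\xi(z_1)=\xi(z_2)$ then $\psi_{z_1}=\psi_{z_2}$ and $\omega'(z_1)=\omega'(z_2)$, whence $z_1=z_2$ in $\D'(\Qbar_p)$.

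The main obstacle I expect is the interplay between the classical step and the $p$-adic normalizations. Concretely one has to confirm that the restriction $\widetilde{\chi}\delta_{\widetilde{B}}^{-1/2}|_\Sigma$ really is an accessible refinement of some $\pi_p\in\Pi(\widetilde{\pi}_p)$ with $\pi_p^I\neq 0$ (so that Proposition \ref{gLC} lands on a classical point of $\D$), and that the weight factors $\delta_{\widetilde{B}}^{-1/2}\delta_{\underline{k}}$ and $\delta_B^{-1/2}\delta_k$ descend compatibly along the inclusion $\Sigma^+\hookrightarrow\widetilde{\Sigma}^+$; this precise matching of normalizations is what makes $\D'$ the right intermediate object and is the technical heart of the construction.
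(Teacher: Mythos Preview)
Your argument follows the same route as the paper's proof: pick a $p$-refined $(\widetilde{\pi},\widetilde{\chi})$ giving rise to $z$, invoke global Langlands compatibility (Proposition~\ref{gLC}) with a suitable choice of $\tau\in\Pi(\widetilde{\pi}_p)$ to produce an automorphic $\pi$ on $G(\A)$ with the right refinement, and then read off the point of $\D''$. You supply more detail than the paper on the Hecke-eigenvalue matching and on injectivity via Lemma~\ref{7.2.7}, both of which the paper leaves essentially implicit. One small inaccuracy: the lift $\widetilde{z}\in Z$ with $\zeta'(\widetilde{z})=z$ need not be unique (cf.\ Section~\ref{subsubsec: twists} on twists), but only existence is needed, so this does not affect the argument.
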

\begin{proof}
Let $z \in Z'$ be a point. By definition there exists a $p$-refined automorphic representation $(\widetilde{\pi}, \widetilde{\chi})$ of $\widetilde{G}(\A)$ of weight $\widetilde{\omega}(z)$ such that $\widetilde{e}\cdot \widetilde{\pi}_f^p \neq 0$ and
$$\psi_{(\widetilde{\pi},\widetilde{\chi})}|_\mathcal{H}= \psi_z. $$
By Proposition \ref{gLC} there exists an automorphic representation $\pi$ of $G(\A)$, such that $(\pi_f^p)^K\neq 0$. After possibly changing the representation $\pi_p$ within the local $L$-packet we can guarantee that $\chi:= \widetilde{\chi}|_{\Sigma^+}$ occurs in $\pi_p^I$. Furthermore $\pi_{\infty}$ has weight $\mu(\omega(z))$. In particular  
$$ \psi_z = \psi_{(\pi,\chi)} $$
and the pair $(\psi_z, \mu(\omega(z)))$ corresponds to a point $y \in \D(\Qbar_p)$, necessarily unique by Lemma \ref{7.2.7}.
By construction $\omega(y)=\mu(\omega'(z))$. We define $\xi(z) \in \D''(\Qbar_p)$ to be the unique point corresponding to $(y,\omega'(z))$.

\end{proof}

\begin{rem} We set up the definition of global Langlands compatibility to have maximal flexibility at $p$ and so that it can easily be adapted to other groups. For our purpose we could have omitted the condition of choosing the member of the $L$-packet at $p$ arbitrarily in Definition \ref{GLCdef} for the following reason. The local $L$-packet attached to an unramified principal series representation $\tau$ of $\GL_2(\Q_p)$ has size at most two. It is of size two precisely when up to an unramified twist $\tau \cong Ind(\eta,1)$ for a quadratic character $\eta$. Assume this is the case and denote by $\tau_1$ and $\tau_2$ the two members of the $L$-packet attached to $\tau$. 
The representation $\tau$ has two distinct refinements $\chi_1 = (\eta,1)$ and $\chi_2=(1,\eta)$. But $\chi_1((p^{-1},p))= -1 = \chi_2((p^{-1},p))$, i.e.\ the refinements agree when restricted to $\Sigma^+$. Furthermore $\dim \tau_1^I = \dim \tau_2^I = 1$ (see Proposition 3.2.12 of \cite{lansky}), so each member of the $L$-packet sees the same eigenvalues. This implies that in the proof of the previous proposition there is in fact no need to change the representation at $p$. 
\end{rem}

\begin{prop}
There exists a closed immersion $\xi: \D' \hookrightarrow \D''$ such that the diagrams 
$$\xymatrix{
\D' \ar[rd]_{\omega'}\ar[rr]^{\xi} & & \ar[ld]^{\omega''}\D'' \\
&  \widetilde{\mathcal{W}}  &  }
\hspace{1cm}\xymatrix{
& \mathcal{H}_S \ar[ld] \ar[rd] &\\
\mathcal{O}(\D'') \ar[rr]_{\xi^*} & & \mathcal{O}(\D') }
 $$  
commute.
\end{prop}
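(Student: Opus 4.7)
The plan is to apply Proposition \ref{interpolation} to the two eigenvariety data underlying $\D'$ and $\D''$. Observe that both eigenvarieties are built from data of the form $(\widetilde{\mathcal{W}}, M_{\bullet}, \alpha_{\bullet}, \mathcal{H}_S, u)$, i.e.\ they share the same base $\widetilde{\mathcal{W}}$, the same commutative $E$-algebra $\mathcal{H}_S$ and the same distinguished compact element $u$; only the Banach modules differ. This is precisely the setting in which Proposition \ref{interpolation} operates.

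The first step is to collect the hypotheses. By Proposition \ref{Z'e} the subset $Z'\subset \D'(\Qbar_p)$ arising from classical $p$-refined automorphic representations is accumulation and Zariski-dense in $\D'$. From the construction of the map $\xi:Z'\hookrightarrow \D''(\Qbar_p)$ in the previous proposition one reads off directly that $\omega''(\xi(z))=\omega'(z)$ for all $z\in Z'$ (both are equal to $\widetilde{\omega}(z)$, the weight of the underlying $p$-refined automorphic representation) and that the induced system of Hecke eigenvalues on $\mathcal{H}_S$ coincides: $\psi''_{\xi(z)} = \psi'_z$. This is what compatibility of $\xi$ with the maps to $\widetilde{\mathcal{W}}$ and the equality of Hecke characters amount to.

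The second step is the interpolation itself. Proposition \ref{interpolation} now produces a unique closed immersion $\xi:\D'\hookrightarrow \D''$ extending $\xi$ on $Z'$, fitting into both required commutative diagrams (the one over $\widetilde{\mathcal{W}}$ and the Hecke-algebra diagram). Uniqueness reduces to Lemma \ref{7.2.7} via reducedness and separatedness of $\D'$ and $\D''$, which is already built into the conclusion of Proposition \ref{interpolation}.

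In short, this proposition is essentially a direct corollary of the interpolation machine: the real work was done in constructing $\xi$ on classical points, and verifying that the defining triples $(\omega'(z),\psi'_z)$ and $(\omega''(\xi(z)),\psi''_{\xi(z)})$ agree. No obstacle is expected beyond carefully reading off these compatibilities from the previous proposition's construction.
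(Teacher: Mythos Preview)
Your proposal is correct and follows exactly the paper's approach: invoke Proposition \ref{interpolation} with the accumulation and Zariski-dense set $Z'$ from Proposition \ref{Z'e} and the injection $Z'\hookrightarrow \D''(\Qbar_p)$ supplied by the preceding proposition. The paper's proof is essentially the one-line version of what you wrote.
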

\begin{proof}
Using the inclusion $Z' \hookrightarrow \D''(\Qbar_p)$ from the previous proposition we simply apply Proposition \ref{interpolation} to get the desired morphism $\xi: \D' \rightarrow \D''$.  
\end{proof}

Putting all the morphisms together we have proved
\begin{thm}
There exists a $p$-adic Langlands transfer $\zeta:\widetilde{\D} \rightarrow \D$. It has the additional property that it sends a classical point $z \in Z$ to a classical point in $\D$.
\label{p-adictransfer}
\end{thm}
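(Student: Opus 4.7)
The plan is to define $\zeta$ as the composition of the three maps already constructed: the morphism $\zeta':\widetilde{\D}\rightarrow \D'$ from Proposition \ref{D'}, the closed immersion $\xi:\D'\hookrightarrow \D''$ just obtained, and the natural projection $\mathrm{pr}:\D''\rightarrow \D$ coming from the fact that $\D''=\widetilde{\mathcal{W}}\times_{\mathcal{W}}\D$. That is, set
\[
\zeta := \mathrm{pr}\circ\xi\circ\zeta'\colon \widetilde{\D}\longrightarrow \D.
\]
All three factors are morphisms of reduced separated rigid spaces over $E$, so $\zeta$ is well-defined.

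Next I would verify the two commutative diagrams in Definition \ref{padictransfer}. For the weight-space diagram, Proposition \ref{D'} gives $\omega'\circ\zeta' = \widetilde{\omega}$, the construction of $\xi$ forces $\omega''\circ\xi = \omega'$, and by definition of the fibre product $\omega\circ\mathrm{pr}=\mu\circ\omega''$. Concatenating these identities yields $\omega\circ\zeta = \mu\circ\widetilde{\omega}$, as required. For the Hecke diagram, the map on functions induced by $\mathrm{pr}$ pulls back the $\mathcal{H}_S$-action on $\D$ to the $\mathcal{H}_S$-action on $\D''$ by definition of the pullback eigenvariety; $\xi^*$ is compatible with the $\mathcal{H}_S$-actions on $\D''$ and $\D'$ (by the commuting diagram of the previous proposition); and $(\zeta')^*$ is compatible with the maps $\lambda:\mathcal{H}_S\hookrightarrow \widetilde{\mathcal{H}}_S$ acting on $\D'$ and $\widetilde{\D}$ respectively. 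Composing these three compatibilities gives the required commutativity for $\zeta^*$.

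For the second assertion, let $z\in Z$ correspond to a $p$-refined automorphic representation $(\widetilde{\pi},\widetilde{\chi})$ of $\widetilde{G}(\A)$. By construction of $\xi$ on classical points, $\xi(\zeta'(z))$ is the point of $\D''$ attached to the pair $(y,\omega'(z))$, where $y\in \D(\Qbar_p)$ is the point coming from an automorphic representation $(\pi,\chi)$ of $G(\A)$ with $\pi\in\Pi(\widetilde{\pi})$, $(\pi_f^p)^K\neq 0$, $\chi$ an accessible refinement of $\pi_p$, and $\pi_\infty$ of weight $\mu(\widetilde{\omega}(z))$. Then $\mathrm{pr}(\xi(\zeta'(z)))=y$, and by Lemma \ref{clapoint} (applied to $G$) the point $y$ is a classical point of $\D$ in the sense of Section \ref{subsec: firsteigenvarieties}.

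The main obstacle has already been absorbed into the construction of $\xi$, which in turn rested on Proposition \ref{gLC} (existence of globally Langlands compatible tame levels), Proposition \ref{switch} (Labesse--Langlands multiplicity formulas), and the interpolation Proposition \ref{interpolation}. Given these ingredients, the present theorem is essentially a routine diagram-chase, and the only small technical point to check is that $\zeta(Z)\subset \D(\Qbar_p)$ lands in the set of classical points, which follows from how the classical-points map $\xi|_{Z'}$ was defined.
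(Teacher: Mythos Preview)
Your proposal is correct and follows exactly the paper's approach: the paper's proof is literally the single sentence ``Putting all the morphisms together we have proved'', so your explicit diagram chase composing $\zeta'$, $\xi$ and the projection $\mathrm{pr}:\D''\to\D$ is just an expanded version of that. The only minor quibble is that the classicality of $\zeta(z)$ follows directly from the construction of $\xi$ on $Z'$ (the point $y$ is built from an automorphic $(\pi,\chi)$, hence gives a vector in the classical subspace $M(e,k)^{cl}$), rather than from Lemma~\ref{clapoint} per se, which only shows that such a point lands on the eigenvariety.
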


\begin{corollary} Let $\widetilde{e}$ and $e$ be as above. Let $\D_u(\widetilde{e})$ be an eigenvariety for $\widetilde{G}$ where $u=\mathbf{1}_{\operatorname{[\widetilde{I}z\widetilde{I}]}} \otimes 1_{\widetilde{H}_{ur}} \in \widetilde{\mathcal{H}}_S$ with $z \in \widetilde{\Sigma}^{++}$. There exists a $p$-adic Langlands functorial transfer
$$\D_u(\widetilde{e}) \rightarrow  \D_{u_0}(e).$$
\end{corollary}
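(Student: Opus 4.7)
The plan is to reduce this to the case $u = u_0$ already handled by Theorem \ref{p-adictransfer}, using Lemma \ref{changeu} as the main reduction tool. First I would invoke Lemma \ref{changeu}, which asserts that for any operator of the form $u = \mathbf{1}_{[\widetilde{I}z\widetilde{I}]} \otimes 1_{\widetilde{\mathcal{H}}_{ur,S}}$ with $z \in \widetilde{\Sigma}^{++}$, the eigenvariety $\D_u(\widetilde{e})$ is canonically isomorphic to $\D_{u_0}(\widetilde{e})$ as rigid analytic spaces over $\widetilde{\mathcal{W}}$. Call this isomorphism $\iota: \D_u(\widetilde{e}) \stackrel{\sim}{\longrightarrow} \D_{u_0}(\widetilde{e})$.

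Next, I would note that $\iota$ is compatible with the Hecke structure in the relevant sense: by Corollary 3.11.4 of \cite{david} (which underlies Lemma \ref{changeu}), both eigenvarieties are constructed from the same modules $M(\widetilde{e},X)$ with the same action of $\widetilde{\mathcal{H}}_S$, and the isomorphism is induced by the identity on Hecke algebras; equivalently, the induced maps $\psi_u, \psi_{u_0}: \widetilde{\mathcal{H}}_S \to \mathcal{O}$ agree via $\iota^*$. In particular the natural diagram relating $\iota$ to $\widetilde{\omega}$ and to the Hecke algebra maps commutes, so $\iota$ is itself an $L$-functorial morphism in the sense of Definition \ref{padictransfer} (with $\lambda = \operatorname{id}$ and $\mu = \operatorname{id}$).

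Then I would simply compose. Theorem \ref{p-adictransfer} produces a $p$-adic Langlands transfer $\zeta_0 : \D_{u_0}(\widetilde{e}) \to \D_{u_0}(e)$ making the two diagrams of Definition \ref{padictransfer} commute. The composition
\[
\zeta := \zeta_0 \circ \iota : \D_u(\widetilde{e}) \longrightarrow \D_{u_0}(e)
\]
inherits the commutativity of both diagrams (pasting together the commutative diagrams for $\iota$ and for $\zeta_0$), and hence is a $p$-adic Langlands functorial transfer in the sense of Definition \ref{padictransfer}. The ``classical point to classical point'' property is preserved, since $\iota$ identifies the Zariski-dense accumulation sets $Z$ on both sides.

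I do not expect a serious obstacle: the whole content of the corollary is the observation that changing the compact operator from $u_0$ to an arbitrary Iwahori double coset operator in $\widetilde{\Sigma}^{++}$ does not alter the underlying eigenvariety, so the work done for $u_0$ applies verbatim. The only mild subtlety to check is that $\iota$ is compatible with $\lambda: \mathcal{H}_S \hookrightarrow \widetilde{\mathcal{H}}_S$ --- but this is automatic because $\iota$ comes from an identification of the underlying modules as Hecke modules for $\widetilde{\mathcal{H}}_S$, and $\lambda$ factors through this action.
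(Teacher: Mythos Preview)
Your proposal is correct and follows exactly the paper's own proof: invoke Lemma \ref{changeu} to obtain an isomorphism $\D_u(\widetilde{e}) \cong \D_{u_0}(\widetilde{e})$, then compose with the transfer $\zeta$ of Theorem \ref{p-adictransfer}. Your additional remarks on Hecke-compatibility of the isomorphism make explicit what the paper leaves implicit, but the argument is the same.
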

\begin{proof}
Lemma \ref{changeu} gives an isomorphism from $\D_u(\widetilde{e}) $ to $\D_{u_0}(\widetilde{e})$. Composing the $p$-adic transfer $\zeta$ constructed above with this isomorphism then gives a $p$-adic transfer from $\D_u(\widetilde{e})$ to $\D_{u_0}(e)$. 
\end{proof}

\subsection{A special $p$-adic transfer}
Assume that $\widetilde{K}_{st} :=\prod_{l\neq p} K_l \subset \widetilde{G}(\A_f^{p})$ is a compact open subgroup such that for some finite place $q \in S_B$ we have $K_{q}\cong \mathcal{O}^*_{B_{q}}$ where~$\mathcal{O}_{B_{q}}$ is a maximal order in $B_{q}$. Define $K_{st}:= \widetilde{K}_{st}\cap G(\A_f^{p})$. Define $\widetilde{e}_{st}:=e_{\widetilde{K}_{st}} $ and $e_{st}:= e_{K_{st}}$.
\begin{thm}
There exists a $p$-adic Langlands transfer $\zeta_{st}:\D({\widetilde{e}_{st}}) \rightarrow \D({e_{st}})$.
\end{thm}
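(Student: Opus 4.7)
The plan is to reduce the statement to Theorem~\ref{p-adictransfer} by verifying that the pair $(\widetilde{K}_{st},K_{st})$ is already globally Langlands compatible in the sense of Definition~\ref{GLCdef}, even though $K_{st}$ is the naive intersection $\widetilde{K}_{st}\cap G(\A_f^p)$ rather than the more elaborate intersection over coset representatives used in Proposition~\ref{gLC}. Once compatibility is established, $\widetilde{e}_{st}=e_{\widetilde{K}_{st}}$ and $e_{st}=e_{K_{st}}$ are globally Langlands compatible idempotents and the transfer $\zeta_{st}$ is produced by the general construction of Theorem~\ref{p-adictransfer}.

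The crucial local input at the chosen place $q\in S_B$ is that $B_q$ is a local quaternion division algebra, whose unique maximal order $\mathcal{O}_{B_q}$ has $\mathcal{O}_{B_q}^*$ as the \emph{unique} maximal compact of $\widetilde{G}(\Q_q)=B_q^*$; this subgroup is therefore normal in $\widetilde{G}(\Q_q)$. Consequently $K_{st,q}:=\mathcal{O}_{B_q}^*\cap G(\Q_q)$ is stable under conjugation by every element of $\widetilde{G}(\Q_q)$. Since any two members of a local $L$-packet $\Pi(\widetilde{\pi}_q)$ are related by such a conjugation, they share the same $K_{st,q}$-fixed subspace --- if one member of $\Pi(\widetilde{\pi}_q)$ admits a $K_{st,q}$-fixed vector, then every member does. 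This is precisely what replaces, at $q$, the intersection trick of Proposition~\ref{localcompact}.

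With this observation in hand, the remainder of the verification mirrors Proposition~\ref{gLC}. Given a discrete automorphic $\widetilde{\pi}$ with $(\widetilde{\pi}_f^p)^{\widetilde{K}_{st}}\neq 0$ and any $\tau\in\Pi(\widetilde{\pi}_p)$, I would assemble $\pi=\bigotimes_l\pi_l\in\Pi(\widetilde{\pi})$ by taking $\pi_l=\pi_l^0$ for $l\notin S$, $\pi_p=\tau$, and for each finite $l\in S\setminus\{p\}$ choosing any $\pi_l\in\Pi(\widetilde{\pi}_l)$ with $\pi_l^{K_{st,l}}\neq 0$. Such a choice exists because $\widetilde{\pi}_l^{\widetilde{K}_{st,l}}\neq 0$ forces at least one summand of the restriction $\widetilde{\pi}_l|_{G(\Q_l)}$ to have a $K_{st,l}$-fixed vector. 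If $m(\pi)>0$ we are done; otherwise Theorem~\ref{multiplicities}(2) forces $\pi$ to be endoscopic, and Proposition~\ref{switch} furnishes the other member $\pi_q'\in\Pi(\widetilde{\pi}_q)$ whose substitution yields an automorphic representation $\pi'$. The normality observation at $q$ guarantees that $\pi_q'$ still has $K_{st,q}$-fixed vectors, so $\pi'$ satisfies all three bullets of Definition~\ref{GLCdef}. The main obstacle, I expect, is simply to recognise that the normality of $\mathcal{O}_{B_q}^*$ in $\widetilde{G}(\Q_q)$ allows one to dispense with the conjugate-intersection construction exactly at the place where the endoscopic multiplicity-switch is performed; everywhere else the existence of a single fixed-vector-bearing member of each local packet is all that global Langlands compatibility asks for.
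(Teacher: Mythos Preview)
Your argument is correct, but it takes a genuinely different route from the paper's own proof. The paper observes that the level condition $\widetilde{K}_{st,q}=\mathcal{O}_{B_q}^*$ forces every relevant global packet to be \emph{stable}: the Jacquet--Langlands transfer $\widetilde{\tau}=JL(\widetilde{\pi})$ has $\widetilde{\tau}_q$ an unramified twist of Steinberg, whereas an endoscopic $\widetilde{\tau}$ would have $\widetilde{\tau}_q$ principal series or supercuspidal. Hence Theorem~\ref{multiplicities}(2) gives $m(\pi)>0$ for \emph{every} member of $\Pi(\widetilde{\pi})$, and no switching via Proposition~\ref{switch} is ever needed; one simply picks any $\pi$ with $(\pi_f^p)^{K_{st}}\neq 0$ and the desired $\pi_p$, and it is automatically automorphic.

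By contrast, you do not invoke stability at all. You exploit a different consequence of the same hypothesis, namely that $\mathcal{O}_{B_q}^*$ (and hence $K_{st,q}$) is normal in $B_q^*$, so that conjugation by $\widetilde{G}(\Q_q)$ preserves $K_{st,q}$-invariants; this makes the endoscopic switch at $q$ harmless for the level. Your argument is a closer parallel to the general Proposition~\ref{gLC}, whereas the paper's argument explains conceptually why this tame level is ``special'': it rules out endoscopy altogether. Both approaches are valid; the paper's is shorter and more illuminating about the underlying phenomenon, while yours shows that the naive intersection $K_{st}=\widetilde{K}_{st}\cap G(\A_f^p)$ would work even without the stability observation, provided one has normality at the switching place.
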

\begin{proof} Any $p$-refined automorphic representation $(\widetilde{\pi}, \widetilde{\chi})$ of $\widetilde{G}(\A)$ such that $\widetilde{e}_{st} \cdot \widetilde{\pi}_f^{p} \neq 0 $ gives rise to a global $L$-packet of $G(\A)$ which is stable. This can be seen as follows. If $\widetilde{\tau}:= JL(\widetilde{\pi})$ denotes the Jacquet--Langlands transfer of $\widetilde{\pi}$ then $\widetilde{\tau}_{q}$ is isomorphic to an unramified twist of the Steinberg representation. However if $\widetilde{\tau}$ comes from a Gr{\"o}{\ss}encharacter, then $\widetilde{\tau}_q$ is either a principal series representation or supercuspidal. Therefore $\widetilde{\pi}$ cannot give rise to an endoscopic packet. 

It is now easy to verify that $\widetilde{K}_{st}$ and $K_{st}$ are globally Langlands compatible. Namely let $\widetilde{\pi}$ be as above and $\Pi(\widetilde{\pi})$ the associated $L$-packet. Then $\Pi(\widetilde{\pi})$ contains an element $\pi$ such that $(\pi_f^{p})^{K_{st}}\neq 0 $. As the multiplicity within a stable $L$-packet is constant and positive $\pi$ is automorphic. As we can select $\pi_p$ arbitrarily we see that~$\widetilde{K}_{st}$ and $K_{st}$ are indeed globally Langlands compatible. The rest of the proof is as in the general case.  
\end{proof}
The above gives an example of a $p$-adic transfer between eigenvarieties where the local idempotents $\widetilde{e}_l$ and $e_l$ at the bad places are locally Langlands compatible but not necessarily strongly locally Langlands compatible.

\bibliography{p-adic_LL}

\begin{thebibliography}{10}

\bibitem{Arthur}
James Arthur.
\newblock {\em The endoscopic classification of representations}, volume~61 of
  {\em American Mathematical Society Colloquium Publications}.
\newblock American Mathematical Society, Providence, RI, 2013.
\newblock Orthogonal and symplectic groups.

\bibitem{BC}
Jo{\"e}l Bella{\"{\i}}che and Ga{\"e}tan Chenevier.
\newblock Families of {G}alois representations and {S}elmer groups.
\newblock {\em Ast\'erisque}, 324:xii+314, 2009.

\bibitem{bushnell}
Colin~J. Bushnell and Philip~C. Kutzko.
\newblock Smooth representations of reductive {$p$}-adic groups: structure
  theory via types.
\newblock {\em Proc. London Math. Soc. (3)}, 77(3):582--634, 1998.

\bibitem{kevin2}
Kevin Buzzard.
\newblock On {$p$}-adic families of automorphic forms.
\newblock In {\em Modular curves and abelian varieties}, volume 224 of {\em
  Progr. Math.}, pages 23--44. Birkh\"auser, Basel, 2004.

\bibitem{kevin}
Kevin Buzzard.
\newblock Eigenvarieties.
\newblock In {\em {$L$}-functions and {G}alois representations}, volume 320 of
  {\em London Math. Soc. Lecture Note Ser.}, pages 59--120. Cambridge Univ.
  Press, Cambridge, 2007.

\bibitem{Chenevier}
Ga{\"e}tan Chenevier.
\newblock Familles {$p$}-adiques de formes automorphes pour {${\rm GL}_n$}.
\newblock {\em J. Reine Angew. Math.}, 570:143--217, 2004.

\bibitem{pjl}
Ga{\"e}tan Chenevier.
\newblock Une correspondance de {J}acquet--{L}anglands {$p$}-adique.
\newblock {\em Duke Math. J.}, 126(1):161--194, 2005.

\bibitem{chenevier2}
Ga{\"e}tan Chenevier.
\newblock Une application des vari\'et\'es de hecke des groupes unitaires.
\newblock {\em Book Project at Paris 7}, Book 2, 2009.
\newblock \url{http://fa.institut.math.jussieu.fr/node/29}.

\bibitem{conrad}
Brian Conrad.
\newblock Irreducible components of rigid spaces.
\newblock {\em Ann. Inst. Fourier (Grenoble)}, 49(2):473--541, 1999.

\bibitem{appram}
W.~Duke and E.~Kowalski.
\newblock A problem of {L}innik for elliptic curves and mean-value estimates
  for automorphic representations.
\newblock {\em Invent. Math.}, 139(1):1--39, 2000.
\newblock With an appendix by Dinakar Ramakrishnan.

\bibitem{emerton}
Matthew Emerton.
\newblock On the interpolation of systems of eigenvalues attached to
  automorphic {H}ecke eigenforms.
\newblock {\em Invent. Math.}, 164(1):1--84, 2006.

\bibitem{emerton2}
Matthew Emerton.
\newblock Locally analytic vectors in representations of locally {$p$}-adic
  analytic groups.
\newblock {\em To appear in Memoirs of the AMS}, 2011.
\newblock \url{http://www.math.uchicago.edu/~emerton/pdffiles/analytic.pdf}.

\bibitem{gross}
Benedict~H. Gross.
\newblock Algebraic modular forms.
\newblock {\em Israel J. Math.}, 113:61--93, 1999.

\bibitem{hansen}
David Hansen.
\newblock {U}niversal eigenvarieties, trianguline {G}alois representations, and
  {\textit{p}}-adic {L}anglands functoriality.
\newblock {\em Journal f{\"u}r die reine und angewandte Mathematik}, 2015.
\newblock
  \url{https://www.degruyter.com/view/j/crll.ahead-of-print/crelle-2014-0130/c%
relle-2014-0130.xml}.

\bibitem{JL}
H.~Jacquet and R.~P. Langlands.
\newblock {\em Automorphic forms on {${\rm GL}(2)$}}.
\newblock Lecture Notes in Mathematics, Vol. 114. Springer-Verlag, Berlin-New
  York, 1970.

\bibitem{LL}
J.-P. Labesse and R.~P. Langlands.
\newblock {$L$}-indistinguishability for {${\rm SL}(2)$}.
\newblock {\em Canad. J. Math.}, 31(4):726--785, 1979.

\bibitem{lansky}
Joshua~M. Lansky and A.~Raghuram.
\newblock Conductors and newforms for {$\rm SL(2)$}.
\newblock {\em Pacific J. Math.}, 231(1):127--153, 2007.

\bibitem{david}
David Loeffler.
\newblock Overconvergent algebraic automorphic forms.
\newblock {\em Proc. Lond. Math. Soc. (3)}, 102(2):193--228, 2011.

\bibitem{me}
Judith Ludwig.
\newblock {$p$}-adic functoriality for inner forms of unitary groups in three
  variables.
\newblock {\em Math. Res. Letters}, 21(1):141--148, 2014.

\bibitem{mythesis}
Judith Ludwig.
\newblock {$p$}-adic {L}anglands {F}unctoriality.
\newblock {\em PhD thesis}, 2014.
\newblock
  \url{https://spiral.imperial.ac.uk/bitstream/10044/1/25095/1/Ludwig-J-2014-P%
hD-Thesis.pdf}.

\bibitem{Mok}
Chung~Pang Mok.
\newblock Endoscopic classification of representations of quasi-split unitary
  groups.
\newblock {\em Memoirs of the American Mathematical Society}, 235(1108), 2014.

\bibitem{newton}
James Newton.
\newblock Completed cohomology of {S}himura curves and a {$p$}-adic
  {J}acquet-{L}anglands correspondence.
\newblock {\em Math. Ann.}, 355(2):729--763, 2013.

\bibitem{Ramakrishnan}
Dinakar Ramakrishnan.
\newblock Modularity of the {R}ankin-{S}elberg {$L$}-series, and multiplicity
  one for {${\rm SL}(2)$}.
\newblock {\em Ann. of Math. (2)}, 152(1):45--111, 2000.

\bibitem{taibi}
Olivier Ta{\"{\i}}bi.
\newblock Eigenvarieties for classical groups and complex conjugations in
  {G}alois representations.
\newblock {\em arXiv:1203.0225}, 2012.

\bibitem{urban}
Eric Urban.
\newblock Eigenvarieties for reductive groups.
\newblock {\em Ann. of Math. (2)}, 174(3):1685--1784, 2011.

\bibitem{pjw}
Paul-James White.
\newblock {$p$}-adic {L}anglands functoriality for the definite unitary group.
\newblock {\em J. Reine Angew. Math.}, 691:1--27, 2014.

\end{thebibliography}
\bibliographystyle{plain}

\enddocument